\documentclass{amsart}
\usepackage{amssymb, amsmath, amsthm}
\usepackage[dvipsnames]{xcolor}
\usepackage{hyperref}
\usepackage{stmaryrd}
\usepackage[all]{xy}
\usepackage{tikz-cd}
\usepackage{cleveref}
\usepackage{comment}

\newtheorem{prop}{Proposition}[section]

\newtheorem{lem}[prop]{Lemma}
\newtheorem{cor}[prop]{Corollary}
\newtheorem{thm}[prop]{Theorem}
\theoremstyle{definition}

\theoremstyle{remark}
\newtheorem{examp}[prop]{Example}

\newtheorem{remark}[prop]{Remark}

\DeclareMathAlphabet{\mathpzc}{OT1}{pzc}{m}{it}

\DeclareMathOperator{\Hom}{Hom}

\DeclareMathOperator{\SL}{SL}

\DeclareMathOperator{\Ker}{Ker}

\DeclareMathOperator{\Gal}{Gal}
\DeclareMathOperator{\Image}{Im}

\DeclareMathOperator{\Spec}{Spec}
\DeclareMathOperator{\Proj}{Proj}

\DeclareMathOperator{\codim}{codim}

\DeclareMathOperator{\Br}{Br}
\DeclareMathOperator{\tors}{tors}
\DeclareMathOperator{\Pic}{Pic}
\DeclareMathOperator{\NS}{NS}
\DeclareMathOperator{\Num}{Num}
\DeclareMathOperator{\colim}{colim}

\DeclareMathOperator{\an}{an}
\DeclareMathOperator{\Bl}{Bl}

\newcommand{\Q}{\mathbb{Q}}
\newcommand{\Qp}{\mathbb {Q}_p}

\newcommand{\Qbar}{\overline{\mathbb{Q}}}

\newcommand{\PP}{\mathbb P}

\newcommand{\ZZ}{\mathbb Z}

\newcommand{\QQ}{\mathbb Q}

\newcommand{\Fp}{\mathbb F_p}
\newcommand{\Fq}{\mathbb F_q}
\newcommand{\Fbar}{\overline{\mathbb F}_p}

\newcommand{\red}{\mathrm{red}}






\newcommand{\et}{\operatorname{\acute{e}t}}

\usepackage{relsize} 
\usepackage[bbgreekl]{mathbbol} 
\usepackage{amsfonts} 
\usepackage{color}
\DeclareSymbolFontAlphabet{\mathbb}{AMSb} 
\DeclareSymbolFontAlphabet{\mathbbl}{bbold}

\usepackage[T1]{fontenc}  
\usepackage{mathrsfs}

\title{Vanishing of Brauer groups of moduli stacks of stable curves}
\author{Sebastian Bartling}
\address{}
\email{}

\author{Kazuhiro Ito}
\address{Mathematical Institute, Tohoku University, 6-3, Aoba, Aramaki, Aoba-Ku, Sendai 980-8578, Japan}
\email{kazuhiro.ito.c3@tohoku.ac.jp}
\address{Universität Duisburg-Essen, Fakultät für Mathematik, Thea-Leymann Straße, 45127 Essen, Germany}
\email{sebastian-bartling@hotmail.de}

\date{December 2024}

\begin{document}
\begin{abstract}
    We show that the cohomological Brauer groups of
    the moduli stacks of stable genus $g$ curves over the integers and an algebraic closure of the rational numbers vanish for any $g\geq 2$.
    For the $n$ marked version, we show the same vanishing result in the range $(g,n)=(1,n)$ with $1\leq n \leq 6$ and all $(g,n)$ with $g\geq 4.$
    We also discuss several finiteness results on cohomological Brauer groups of proper and smooth Deligne-Mumford stacks over the integers.
    \end{abstract}
\maketitle

\section{Introduction}
The moduli stack $\overline{\mathcal{M}}_{g}$ of stable genus $g \geq 2$ curves as introduced by Deligne-Mumford \cite{DeligneMumfordModuliCurves} has the beautiful property that it is a proper and smooth Deligne-Mumford stack over $\Spec(\ZZ)$.
When it comes to schemes, the few known examples that share this property are typically rational and therefore do not carry non-trivial Brauer classes.
It is then natural to ask whether $\overline{\mathcal{M}}_{g}$ carries non-trivial Brauer classes defined over
$\ZZ$
or
an algebraic closure $\overline{\QQ}$ of $\QQ$.
The main result of this note claims that this is not the case; more precisely, we show the following:

\begin{thm}[see Corollary \ref{Corollary: Brauer group of moduli spaces of curves over Z} and Section \ref{Section: Brauer groups of moduli spaces of stable curves}]\label{thm: main result}
The equalities
$$
\Br(\overline{\mathcal{M}}_{g,\ZZ})=\Br(\overline{\mathcal{M}}_{g,\overline{\Q}})=0
$$
hold for all $g\geq 2$.
    For integers $g, n \geq 0$ such that $2g-2+n>0$, the equalities
$$
\Br(\overline{\mathcal{M}}_{g,n,\ZZ})=\Br(\overline{\mathcal{M}}_{g,n,\overline{\Q}})=0
$$
hold for $(g, n)=(1,n)$ with $1\leq n \leq 6$ and for all $(g,n)$ with $g=0$ or $g\geq 4.$
\end{thm}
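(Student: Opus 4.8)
The plan is to compute the cohomological Brauer group $\Br(\mathcal{X})=H^2_{\et}(\mathcal{X},\Gm)_{\tors}$ of $\mathcal{X}=\overline{\mathcal{M}}_{g,n}$, over $\overline{\QQ}$ and over $\ZZ$, by the standard reduction through the Kummer sequence. Since $\overline{\mathcal{M}}_{g,n}$ is a regular Deligne--Mumford stack, $H^2_{\et}(\mathcal{X},\Gm)$ is already torsion and hence equals $\Br(\mathcal{X})$, and the exact sequences $0\to\Pic(\mathcal{X})/m\to H^2(\mathcal{X},\mu_m)\to\Br(\mathcal{X})[m]\to 0$ reduce the problem to surjectivity of the cycle class map $\Pic(\mathcal{X})\otimes\QQ/\ZZ\to\varinjlim_m H^2(\mathcal{X},\mu_m)$ (using flat cohomology for those $m$ not invertible on $\Spec\ZZ$; over $\overline{\QQ}$ the target is just $H^2(\mathcal{X},\QQ/\ZZ(1))$). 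Over $\overline{\QQ}$ this surjectivity is equivalent to the conjunction: $H^1(\mathcal{X},\OO)=0$ (so that $\Pic(\mathcal{X})$ is finitely generated), $H^2_{\et}(\mathcal{X},\ZZ_\ell(1))$ is spanned modulo torsion by classes of divisors, and $H^3_{\et}(\mathcal{X},\ZZ_\ell(1))$ is torsion-free, for every prime $\ell$.

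For the geometric statement I would feed in the known structure of the low-degree (co)homology of $\overline{\mathcal{M}}_{g,n}$. The Picard group $\Pic(\overline{\mathcal{M}}_{g,n,\overline{\QQ}})$ is a finitely generated free abelian group generated by tautological classes (namely $\lambda$, the $\psi_i$, and the boundary classes), by Arbarello--Cornalba for $g\geq 3$ together with the explicit small-genus descriptions; in particular $H^1(\overline{\mathcal{M}}_{g,n},\OO)=0$, which also follows from $H^1(\overline{\mathcal{M}}_{g,n},\QQ)=0$. Moreover $H^1=H^3=0$ rationally and $H^2$ is tautological, again by Arbarello--Cornalba, which supplies the ``algebraic'' half of the requirement for all $(g,n)$. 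What remains is the integral input: one must rule out $\ell$-torsion in $H^3_{\et}(\overline{\mathcal{M}}_{g,n},\ZZ_\ell(1))$ for every $\ell$ --- equivalently, torsion in $H_2(\overline{\mathcal{M}}_{g,n},\ZZ)$ --- and this is exactly where the hypotheses on $(g,n)$ enter. For $g=0$ the stack $\overline{\mathcal{M}}_{0,n}$ is a smooth projective $\ZZ$-scheme, an iterated blow-up of $\PP^{n-3}_{\ZZ}$ in smooth centres, and $\Br(\overline{\mathcal{M}}_{0,n,\overline{\QQ}})=\Br(\overline{\mathcal{M}}_{0,n,\ZZ})=0$ follows from the standard blow-up and projective-bundle invariance of the Brauer group together with $\Br(\ZZ)=0$. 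For $g\geq 4$ one controls the integral cohomology of $\overline{\mathcal{M}}_{g,n}$ uniformly in $n$ --- e.g.\ via the universal-curve maps $\overline{\mathcal{M}}_{g,n+1}\to\overline{\mathcal{M}}_{g,n}$, reducing to the well-understood $\overline{\mathcal{M}}_g$ and using torsion-freeness of the cohomology of the nodal fibres. For $g=1$ and $1\leq n\leq 6$ one invokes the explicit integral computations of $H^{\leq 3}(\overline{\mathcal{M}}_{1,n})$ (Petersen and others), which have the required shape only in that range; and for $\overline{\mathcal{M}}_g$ with $g=2,3$ one uses the classical explicit descriptions of its cohomology. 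By contrast, for $g=2,3$ with $n\geq 1$ the possible torsion in $H_2$ cannot at present be ruled out, which is precisely the omitted range.

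To pass from $\overline{\QQ}$ to $\ZZ$ I would use the Hochschild--Serre spectral sequence for $\overline{\mathcal{M}}_{g,n,\overline{\QQ}}\to\overline{\mathcal{M}}_{g,n,\QQ}$ with its $\Gal(\overline{\QQ}/\QQ)$-action. Since $\overline{\mathcal{M}}_{g,n}$ is geometrically connected and proper, $H^0(\overline{\mathcal{M}}_{g,n,\overline{\QQ}},\Gm)=\overline{\QQ}^{\times}$; since the tautological generators of $\Pic(\overline{\mathcal{M}}_{g,n,\overline{\QQ}})$ are defined over $\ZZ$, the Galois action on that free module is trivial, so $H^1(\Gal_\QQ,\Pic)=\Hom_{\cont}(\Gal_\QQ,\ZZ^{r})=0$; combining these with $\Br(\overline{\mathcal{M}}_{g,n,\overline{\QQ}})=0$, Hilbert~90, and $H^3(\Gal_\QQ,\overline{\QQ}^{\times})=0$, the low-degree exact sequence of the spectral sequence yields $\Br(\overline{\mathcal{M}}_{g,n,\QQ})=\Br(\QQ)$. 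Next, $\Br(\overline{\mathcal{M}}_{g,n,\ZZ})$ injects into $\Br(\overline{\mathcal{M}}_{g,n,\QQ})$, since $\overline{\mathcal{M}}_{g,n,\ZZ}$ is regular and integral with dense generic fibre (a purity/injectivity statement for Brauer groups of regular Deligne--Mumford stacks), so every class of $\Br(\overline{\mathcal{M}}_{g,n,\ZZ})$ is the pullback of some $\alpha\in\Br(\QQ)$. Finally $\alpha$ is forced to vanish: for each finite prime $p$ it extends over $\overline{\mathcal{M}}_{g,n,\ZZ_p}$, hence has trivial residue along the regular divisor $\overline{\mathcal{M}}_{g,n,\Fp}$; but by functoriality of residues this residue is the pullback of $\partial_p(\alpha)\in H^1(\Fp,\QQ/\ZZ)$ along $\overline{\mathcal{M}}_{g,n,\Fp}\to\Spec\Fp$, which is injective by geometric connectedness, so $\partial_p(\alpha)=0$ for every $p$; the Hasse--Brauer--Noether sequence $0\to\Br(\QQ)\to\bigoplus_v\Br(\QQ_v)\to\QQ/\ZZ\to 0$ then forces the archimedean invariant to vanish as well, so $\alpha=0$ and $\Br(\overline{\mathcal{M}}_{g,n,\ZZ})=0$.

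The main difficulty is twofold. On the geometric side, one must show that $H^3_{\et}(\overline{\mathcal{M}}_{g,n},\ZZ_\ell(1))$ has no $\ell$-torsion, for all $\ell$ and uniformly in $n$ when $g\geq 4$ (and in the stated ranges for $g\leq 1$): torsion in the cohomology of $\overline{\mathcal{M}}_{g,n}$ is poorly understood in general, which is why the small-genus marked cases are delicate and why $g=2,3$ with $n\geq 1$ are left out. On the foundational side, one must set up, for Deligne--Mumford stacks over $\ZZ$, the inputs used above --- the identification $H^2_{\et}=\Br$, the injectivity into the generic fibre, and the correct handling of flat cohomology and $p$-torsion at the residue characteristics --- which is presumably the substance of the general finiteness results announced in the abstract and has to be in place before the spectral-sequence argument can be run.
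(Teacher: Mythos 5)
The arithmetic half of your argument --- descending from $\Br(\overline{\mathcal{M}}_{g,n,\overline{\QQ}})=0$ to vanishing over $\ZZ$ via the Leray/Hochschild--Serre sequence, triviality of $H^1(\Gal(\overline{\QQ}/\QQ),\Pic(\overline{\mathcal{M}}_{g,n,\overline{\QQ}}))$, injectivity into the generic fibre, and the Albert--Brauer--Hasse--Noether sequence --- is essentially sound and close to what the paper does. (The paper kills the residual class in $\Br(\QQ)$ more simply by pulling back along a $\ZZ$-point of $\overline{\mathcal{M}}_{g,n,\ZZ}$ and using $\Br(\ZZ)=0$; your residue computation at each $\Fp$ is a workable substitute, granted that purity and residues are available for regular DM stacks over $\ZZ_p$, foundations the paper also has to supply. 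Note also that the paper gets triviality of the Galois action on $\Pic$ not from rationality of tautological generators but from unramifiedness of $H^2_{\et}(\cdot,\Qp(1))$ plus Minkowski, which is more robust.)

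The gap is in the geometric half, which is where the content of the theorem lies. Your reduction to ``no $\ell$-torsion in $H^3_{\et}(\overline{\mathcal{M}}_{g,n,\overline{\QQ}},\ZZ_\ell(1))$'' is the right target, but for the genuinely new cases you then appeal to computations that do not exist. For $(g,n)=(3,0)$ there is no classical description ruling out $2$-torsion: on the contrary, $\Br(\mathcal{M}_{3,\overline{\QQ}})\simeq\ZZ/2\ZZ$ is \emph{nonzero} (Di Lorenzo--Pirisi), and the whole difficulty is to show that this class does not extend across the boundary. The paper does this by an explicit degeneration: over $F=\Fq((t))$ with $p\geq 11$ it produces a smooth plane quartic $C$ with stable reduction over $\Fq[[t]]$ and $\gamma(C)\neq 0$ in $\Br(F)[2]$, built from the \'etale algebra of the $27$ lines on a cubic surface via Shioda's explicit equations, and then transports the resulting vanishing from $\overline{\mathbb F}_p$ to $\overline{\QQ}$ using constancy of the Picard scheme. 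Similarly, for $(1,n)$ with $3\leq n\leq 6$ there are no integral computations of $H^{\leq 3}(\overline{\mathcal{M}}_{1,n})$ of the required shape in the literature; the paper instead uses the Battistella--Di Lorenzo wall-crossing diagram connecting $\overline{\mathcal{M}}_{1,n}$ by (weighted) blow-ups in codimension $\geq 2$ to a weighted projective stack (a Grassmannian for $n=6$), combined with Shin's vanishing for weighted projective stacks and purity --- and it is precisely the termination of that diagram that produces the bound $n\leq 6$. Your sketch for $g\geq 4$ (Leray for the universal curve plus ``torsion-freeness of the cohomology of nodal fibres'') also does not obviously control torsion in $H^3$ of the total space; the actual input is Korkmaz--Stipsicz's torsion-freeness of $H_2$ of the mapping class group, valid for all $n$ once $g\geq 4$. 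Without these inputs the new cases of the theorem are not established.
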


Here $\overline{\mathcal{M}}_{g, n}$ stands for the moduli stack of stable genus $g$ curves with $n$ marked points, which is also a proper and smooth Deligne-Mumford stack; see Section \ref{Subsection:Notation and recollections}.
In the following discussion, we also need
the moduli stack
$\mathcal{M}_{g, n} \subset \overline{\mathcal{M}}_{g, n}$
of smooth genus $g$ curves with $n$ marked points.

\begin{remark}
    By our convention to denote by $\Br(-)$ the \textit{cohomological} Brauer group and since the Deligne-Mumford stacks $\overline{\mathcal{M}}_{g,n,\ZZ}$ and $\overline{\mathcal{M}}_{g,n,\overline{\Q}}$ are regular, our result means that
    $$
    H^{2}_{\et}(\overline{\mathcal{M}}_{g,n,\ZZ},\mathbb{G}_{m})=H^{2}_{\et}(\overline{\mathcal{M}}_{g,n,\overline{\Q}},\mathbb{G}_{m})=0
    $$
    in the above range of $(g,n).$
\end{remark}

\begin{remark}\label{Remark:genus zero case}
    The equalities
    $
    \Br(\overline{\mathcal{M}}_{0,n,\ZZ})=\Br(\overline{\mathcal{M}}_{0,n,\overline{\Q}})=0
    $
    for all $n \geq 3$ are well-known.
    For this, we first note that $\overline{\mathcal{M}}_{0,n,\ZZ}$ is a scheme.
    The desired equalities follow (c.f.\ the proof of \cite[Lemma 6.9.8]{Poonen}) from the fact that $\overline{\mathcal{M}}_{0,n,\Q}$
    is a rational proper smooth scheme over $\Q$ and $\overline{\mathcal{M}}_{0,n, \ZZ}$ has a $\ZZ$-section.
    \end{remark}

\begin{remark}\label{Remark: previous studies}
Assume that $g \geq 1$.
To the best of our knowledge, vanishing of $\Br(\overline{\mathcal{M}}_{g,n,\overline{\Q}})$ was known in the following cases of $(g,n)$:
\begin{enumerate}
    \item [(1,1):] Antieau-Meier \cite{AntieauMeierBrauerElliptic} show the stronger statement $\Br(\mathcal{M}_{1,1,\overline{\Q}})=0$.
    (In Proposition \ref{Prop: Vanishing (1,1) bar Q} we give an independent argument.
    We stress that this vanishing is an easy part of \cite{AntieauMeierBrauerElliptic}.)
    \item [(1,2):] It follows from a result of Inchiostro \cite[Theorem 1.2]{InchiostrobarM1_2} that $\Br(\overline{\mathcal{M}}_{1,2, \overline{\QQ}})=0$ (see also Proposition \ref{Prop: vanishing g 1 n 2 3 4 5 6}).
    \item [(2,0):] Here the result follows from \cite{DiLorenzo-PirisiHyperEllEven}; see Proposition \ref{Proposition:genus two case}.
    \item [$g\geq 4:$] Here the result follows from \cite{SecondHomologyGenusGEQ4}; see Theorem \ref{thm vanishing over C}. 
\end{enumerate}
Therefore, the new cases established in this paper are the case $(g, n)=(3, 0)$
and the case
$(g, n)=(1,n)$ with  $3\leq n \leq 6$.

When it comes to vanishing of $\Br(\overline{\mathcal{M}}_{g,n,\ZZ})$ the only known statement that we are aware of concerns the case $(g, n)=(1,1).$ Namely, in the aforementioned article of Antieau-Meier \cite{AntieauMeierBrauerElliptic} they show as their main result that
$$
\Br(\mathcal{M}_{1,1,\ZZ})=0,
$$
which implies $\Br(\overline{\mathcal{M}}_{1,1,\ZZ})=0$ (yet again, our argument for $\Br(\overline{\mathcal{M}}_{1,1,\ZZ})=0$ is independent of their work).
\end{remark}

\begin{remark}\label{Remark:torsion and torsion coefficients}
Since we have
    \[
    \Br(\overline{\mathcal{M}}_{g,n,\overline{\mathbb{Q}}}) = \bigoplus_{\ell} H^3_{\et}(\overline{\mathcal{M}}_{g,n,\overline{\mathbb{Q}}}, \ZZ_\ell(1))_{\mathrm{tor}}
    \]
    (see Lemma \ref{lem: short exact sequence with H3}),
    our result shows that $H^3_{\et}(\overline{\mathcal{M}}_{g,n,\overline{\mathbb{Q}}}, \ZZ_\ell(1))_{\mathrm{tor}}=0$ for any prime $\ell$ in the range of $(g,n)$ stated in Theorem \ref{thm: main result}.
    In fact, together with an extension of a classical result due to Abrashkin and Fontaine (\cite{Abrashkin90}, \cite{FontainePropreLisseZ}) to Deligne-Mumford stacks that are proper and smooth over $\Spec(\ZZ)$ (see Theorem \ref{thm: FontaineAbrashkinDMStacks}),
    this implies that $H^3_{\et}(\overline{\mathcal{M}}_{g,n,\overline{\mathbb{Q}}}, \ZZ_\ell(1))=0$ for any prime $\ell$.
    It would be interesting to investigate whether our main result also extends to the missing cases
    (namely $(1,n)$ with $n\geq 7$ and $(2,n),(3,n)$ with $n\geq 1$), but we currently do not know how to approach these cases.
    Let us also mention here that it is however not true in general that
    $
        H^{3}(\overline{\mathcal{M}}_{g,n,\overline{\mathbb{Q}}},\mathbb{Z}/2\mathbb{Z})=0
    $
    (even for $g \geq 4$).
    This follows from \cite[Theorem 1.2]{EbertGiansiracusaTorsionClass}, but we omit the details here.
\end{remark}

We are not aware of an example of a proper and smooth \textit{scheme} $X$ over $\Spec (\ZZ)$ with non-trivial $\Br(X)$ or $\Br(X_{\overline{\Q}})$ and, as the result of Abrashkin and Fontaine (\cite{Abrashkin90}, \cite{FontainePropreLisseZ}) indicates,
it seems that there are not ``many'' examples of proper and smooth schemes over $\Spec (\ZZ)$.
However,
we have a lot of interesting examples of proper and smooth Deligne-Mumford stacks over $\Spec (\ZZ)$, such as $\overline{\mathcal{M}}_{g,n,\ZZ}$.
We remark that there are quite easy examples of Deligne-Mumford stacks $X$ that are proper and smooth over $\Spec (\ZZ)$, such that
$\Br(X)$ does not vanish
respectively
such that $\Br(X_{\overline{\Q}})$ does not vanish (see Examples \ref{examp: Brauer over integers not zero} and \ref{examp: Brauer over geometric generic fiber not zero}) - in other words, the vanishing patterns proven in this article are not general features of proper and smooth Deligne-Mumford stacks over $\Spec (\ZZ)$; they might be specific to the stack $\overline{\mathcal{M}}_{g,n}$ (and maybe to schemes).

\begin{remark}\label{Remark: finiteness of Brauer group}
    Along the way, we will prove that for Deligne-Mumford stacks $X$ that are proper and smooth over $\Spec (\ZZ)$, the Brauer group $\Br(X)$ is finite; see Theorem \ref{Theorem:finiteness of Brauer group}.
    This may fit into the following conjecture of Artin (\cite[Remarque 2.5 (c)]{GrothendieckGroupedeBrauerIII}): the Brauer group of every proper scheme over $\Spec (\ZZ)$ is finite.
    However, the conjecture of Artin is widely open and is in general much more difficult than the case we are considering here.
\end{remark}

\subsection{Outline of the proof}
Our argument for proving Theorem \ref{thm: main result} proceeds by establishing vanishing of the Brauer group of $\overline{\mathcal{M}}_{g,n,\overline{\Q}}$ and then deducing vanishing over the integers; see Section \ref{Subsection:Deducing vanishing over the integers} for this reduction step, which may be interesting in its own right.

As explained in Remark \ref{Remark: previous studies} (and Remark \ref{Remark:genus zero case}), our results over $\overline{\mathbb{Q}}$ are new for 
$(g, n)=(3, 0)$
and for
$(g, n)=(1,n)$ with  $3\leq n \leq 6$.
Here we outline the proofs for these new cases (but we also give independent arguments for some previous results in the main body of the paper).

Our argument for the case $(g, n)=(3, 0)$ goes as follows:
By the work of Di Lorenzo-Pirisi \cite{Lorenzo-Pirisi}, one knows that for any field $k$ with $p=\text{char}(k)\neq 2,$
$$
\Br(\mathcal{M}_{3, k})=\ZZ/2\ZZ \oplus \Br(k) \oplus B_{p},
$$
where $B_{p}=0$ if $\text{char}(k)=0$ and a $p$-primary torsion group else.
We first reduce the problem of showing that $\Br(\overline{\mathcal{M}}_{3, \overline{\Q}})=0$ to showing that $\Br(\overline{\mathcal{M}}_{3, \overline{\mathbb{F}}_{p}})[2]=0$ for some prime $p\neq 2.$
Our task is therefore to show that the generator $\gamma$ of the factor $\ZZ/2\ZZ$ in $\Br(\mathcal{M}_{3,\mathbb{F}_{q}})[2]$ does not extend to the compactification for a finite field $\mathbb{F}_{q}$ of characteristic $p$.
Using that $\Br(\mathbb{F}_{q}[[t]])=0,$ it is easy to see that it suffices for this to produce a stable curve of genus three $\mathscr{C}$ over $\mathbb{F}_{q}[[t]],$ such that the generic fiber $C=\mathscr{C}_{\eta}$ is a smooth genus three curve over $F=\mathbb{F}_{q}((t))$, which has the property that $\gamma(C) \in \Br(F)[2]$ is non-trivial.
To achieve this, we use the beautiful relation found by Di Lorenzo-Pirisi between the factor $\gamma$ and the étale algebra of 27 lines on smooth cubic surfaces $X$ obtained by blowing up (general) six points in $\mathbb{P}^{2}$, and the result of Shioda \cite{Shioda} on very explicit defining equations of smooth quartic curves which arise as the ramification locus of some projection $X \dashrightarrow \mathbb{P}^{2}$.
The upshot is that for $p\geq 11$ we are able to find such curves; see Section \ref{Subsection: genus three} for details.

For the cases $(1,n)$ with $3\leq n \leq 6$
(also for $n=1, 2$), we give a uniform argument; see Proposition \ref{Prop: vanishing g 1 n 2 3 4 5 6}.
Namely, we use recent work of Battistella-Di Lorenzo \cite{battistella2024wallcrossingintegralchowrings} (relying on work of Smyth \cite{Smyth19DiagramG1} and Lekili-Polishchuk \cite{LP19ModularCompactification}), who explain that in these cases the stack $\overline{\mathcal{M}}_{1,n}$ is connected to a stacky weighted projective space (resp. Grassmannian) by a diagram of smooth Deligne-Mumford stacks along weighted (and ordinary) blow-ups. 
Vanishing of the Brauer group of a stacky weighted projective space over $\overline{\mathbb{Q}}$ follows from the work of Shin \cite{ShinBrauerWeighted} and we deduce the desired result by using purity of the Brauer group (see Proposition \ref{Prop: purity Brauer}) as established by \v{C}esnavi\v{c}ius \cite{CesnaviciusPurityBrauer}.

\subsection{Structure of the article}
In Section \ref{Section:Preparations} we collect some results which are extensions of well-known results in the context of schemes to the setting of Deligne-Mumford stacks, which will be used later. In Section \ref{section: General results on Brauer groups} we first show finiteness of the Brauer group of a proper and smooth Deligne-Mumford stack $X$ over $\Spec(\ZZ)$, see Theorem \ref{Theorem:finiteness of Brauer group}, then we explain how in this setting it is often possible
(in particular, possible for $X=\overline{\mathcal{M}}_{g,n}$) to reduce vanishing of $\Br(X)$ to vanishing of $\Br(X_{\overline{\mathbb{Q}}})$,
see Proposition \ref{Proposition: deducing vanishing over Z}. We end that section with a discussion of some examples. Finally, in Section \ref{Section: Brauer groups of moduli spaces of stable curves} we prove our vanishing results on $\Br(\overline{\mathcal{M}}_{g,n, \overline{\mathbb{Q}}})$.

\subsection{Conventions, notations and recollections}\label{Subsection:Notation and recollections}
All Deligne-Mumford (DM) stacks considered in this paper will be assumed to be quasi-separated.
If $X$ is a scheme or DM stack, we denote by
\[
\Br(X)=H^{2}_{\et}(X,\mathbb{G}_{m})_{\tors}
\]
the cohomological Brauer group.
Here, for an abelian group $M$, we let $M_{\tors}$ denote its torsion part.
Recall that $\Br(X)=H^{2}_{\et}(X,\mathbb{G}_{m})$ if $X$ is noetherian and regular (see \cite[Proposition 2.5]{AntieauMeierBrauerElliptic}).
Recall here that a DM stack $X$ is called regular if there exists an étale presentation of $X$ by a regular scheme (then all presentations will be by regular schemes). 
We use the notion of the codimension of a closed substack of a noetherian DM stack as in \cite[Definition 6.1]{osserman2013relativedimensionmorphismsdimension}.

For integers $g, n \geq 0$ such that $2g-2+n>0$,
we denote by $\mathcal{M}_{g,n}$ the moduli stack of smooth genus $g$ curves with $n$ marked points, by $\overline{\mathcal{M}}_{g,n}$ the moduli stack of stable genus $g$ curves with $n$ marked points; see \cite[Section 1]{Knudsen} for example for the basic definition.
If $n=0$ (so that $g \geq 2$), then we write
$\mathcal{M}_{g}:=\mathcal{M}_{g,0}$
and
$\overline{\mathcal{M}}_{g}:=\overline{\mathcal{M}}_{g,0}$.
It is well-known that
$\overline{\mathcal{M}}_{g,n}$
is a proper and smooth DM stack over $\Spec (\ZZ)$ with geometric connected fibers 
and $\mathcal{M}_{g,n} \subset \overline{\mathcal{M}}_{g,n}$ is open and dense.
See \cite{DeligneMumfordModuliCurves} and \cite{Knudsen}.
For a ring $R$, we denote by
$\mathcal{M}_{g,n, R}$
and
$\overline{\mathcal{M}}_{g,n, R}$
the base changes to $\Spec (R)$.
We implicitly always assume that $2g-2+n>0$ when considering these moduli stacks.

\section{Preparations}\label{Section:Preparations}
We start by collecting some statements we need for our arguments later.

\begin{lem}\label{lem: blow up regular center smooth DM stack}
    Let $k$ be a field, $X$ a smooth DM stack over $\Spec(k),$ $Z\hookrightarrow X$ a closed DM substack, which is also smooth over $\Spec(k).$
    Consider the blow-up $\Bl_{Z}(X)$ of $X$ along $Z$ (see e.g.\ \cite[Example 3.2.6]{WeightedBlowUps} for a definition).
    Then $\Bl_{Z}(X)$ is smooth over $\Spec(k).$
\end{lem}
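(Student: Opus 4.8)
The plan is to reduce immediately to the case of schemes via an étale presentation, since both smoothness and the formation of blow-ups are étale-local on the base. First I would choose an étale surjection $U \to X$ with $U$ a scheme; since $X$ is smooth over $\Spec(k)$, so is $U$. Pulling back the closed substack $Z$ along $U \to X$ gives a closed subscheme $Z_U \hookrightarrow U$, and because $Z \to \Spec(k)$ is smooth and smoothness is preserved by base change, $Z_U \to \Spec(k)$ is smooth as well. The key compatibility I need is that blow-ups commute with flat (in particular étale) base change: $\Bl_{Z_U}(U) \cong \Bl_Z(X) \times_X U$. For schemes this is standard (e.g.\ the Stacks Project), and for DM stacks it follows by checking on an étale presentation, which is exactly how the blow-up of a DM stack along a closed substack is constructed in the reference \cite[Example 3.2.6]{WeightedBlowUps} cited in the statement. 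Hence $\Bl_Z(X) \times_X U \to \Bl_Z(X)$ is an étale surjection, and since the source is $\Bl_{Z_U}(U)$, it suffices to show that $\Bl_{Z_U}(U)$ is smooth over $\Spec(k)$.

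This last step is the classical statement that the blow-up of a smooth $k$-scheme along a smooth closed subscheme is again smooth over $k$. I would invoke this directly from the literature (it is, for instance, in Hartshorne for varieties and in the Stacks Project in the generality needed here): the point is that near a point of $Z_U$ one may choose, étale-locally, coordinates in which $Z_U$ is cut out by a regular sequence forming part of a regular system of parameters, so that $Z_U \hookrightarrow U$ is a regular immersion and the blow-up is covered by charts of the form $\Spec$ of a polynomial-type algebra that is visibly smooth over $k$; away from $Z_U$ the blow-up is an isomorphism onto $U$, which is smooth by hypothesis.

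The main (and essentially only) obstacle is the bookkeeping around the definition of blow-ups of DM stacks and the verification that this notion is compatible with étale base change, so that the reduction to schemes is legitimate; once that is in place, the scheme-level statement is entirely classical. I would therefore write the proof as: (i) recall the étale-local construction of $\Bl_Z(X)$ and note flat base change compatibility, (ii) pick an étale presentation $U \to X$ by a smooth $k$-scheme with smooth pullback $Z_U$, (iii) identify $\Bl_Z(X)\times_X U$ with $\Bl_{Z_U}(U)$, and (iv) cite the classical result that $\Bl_{Z_U}(U)$ is smooth over $k$, concluding that $\Bl_Z(X)$ is smooth over $\Spec(k)$ since it admits a smooth (indeed étale) surjection from a smooth $k$-scheme.
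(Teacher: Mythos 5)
Your proposal is correct and follows essentially the same route as the paper: pass to an étale presentation $U \to X$, use the compatibility of blow-ups with flat base change to identify $\Bl_{Z_U}(U)$ as an étale presentation of $\Bl_Z(X)$, and conclude from the classical fact that the blow-up of a smooth $k$-scheme along a smooth (hence regularly immersed) closed subscheme is smooth.
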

\begin{proof}
    Let $U\rightarrow X$ be an étale presentation of $X$ by a scheme $U.$ Since blowing up commutes with flat localization on the base (\cite[Corollary 3.2.14 (iii)]{WeightedBlowUps}), we obtain the étale presentation
    $$
        \Bl_{Z_{U}}(U)\rightarrow \Bl_{Z}(X)
    $$
    and it suffices by the definition of a smooth morphism of algebraic stacks (\cite[Tag 075U]{stacks}) to show that $\Bl_{Z_{U}}(U)$ is a smooth scheme over $\Spec(k).$
    But then $Z_{U}$ is Zariski locally cut out in $U$ by a regular sequence and the result is classical.
\end{proof}

We next deduce the following purity statement for the Brauer group of a noetherian and regular DM stack from results of \v{C}esnavi\v{c}ius \cite{CesnaviciusPurityBrauer}; this is stronger than what we will later need but we wanted to record it here.

\begin{prop}\label{Prop: purity Brauer}
    Let $X$ be a noetherian and regular DM stack and $Z\hookrightarrow X$ a closed substack such that $\codim_{X}(Z)\geq 2,$ then the natural restriction map
    $$
    H^{i}_{\et}(X,\mathbb{G}_{m})\rightarrow H^{i}_{\et}(X-Z,\mathbb{G}_{m})
    $$
    is an isomorphism for $i \leq 2$ and is injective for $i=3$.
\end{prop}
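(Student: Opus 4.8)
The plan is to reduce to the scheme-theoretic purity statement for the Brauer group, which is the main theorem of Česnavičius \cite{CesnaviciusPurityBrauer}, by passing to an étale presentation and using étale-cohomological descent. First I would choose an étale presentation $U \to X$ by a regular noetherian scheme $U$, which exists by definition of regularity of $X$; write $U_\bullet$ for the associated simplicial scheme (the Čech nerve), so that each $U_n = U \times_X \cdots \times_X U$ is again a regular noetherian scheme (fiber products of étale-over-$X$ schemes are étale over $X$, hence regular). Let $Z_n \hookrightarrow U_n$ denote the preimage of $Z$; since $U_n \to X$ is étale, hence flat, and codimension is preserved under flat base change, we have $\codim_{U_n}(Z_n) \geq 2$ for every $n$. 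By Česnavičius's purity theorem, the restriction map $H^i_{\et}(U_n, \Gm) \to H^i_{\et}(U_n - Z_n, \Gm)$ is an isomorphism for $i \leq 2$ and injective for $i = 3$, for every $n$.

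Next I would invoke the descent spectral sequence $E_1^{s,t} = H^t_{\et}(U_s, \Gm) \Rightarrow H^{s+t}_{\et}(X, \Gm)$ (valid for DM stacks with an étale presentation, cf.\ the cohomological descent machinery for algebraic stacks), together with the analogous spectral sequence for $X - Z$ built from the presentation $U_\bullet - Z_\bullet \to X - Z$ (note $(U - Z_U) \to (X - Z)$ is étale and its Čech nerve is exactly $U_\bullet - Z_\bullet$). The restriction maps along $X - Z \hookrightarrow X$ induce a morphism of these spectral sequences, compatible on $E_1$-pages with the restriction maps on each $U_s$. By the previous paragraph, the map on $E_1^{s,t}$ is an isomorphism for $t \leq 2$ and injective for $t = 3$. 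A comparison-of-spectral-sequences argument then shows: for $i \leq 2$, every $E_1^{s,t}$ with $s + t = i$ has $t \leq 2$, so the map is an isomorphism on the whole $E_1$-page in total degree $\leq 2$ and the abutment map $H^i_{\et}(X,\Gm) \to H^i_{\et}(X - Z, \Gm)$ is an isomorphism for $i \leq 2$. For $i = 3$, the contributing terms are $E_1^{s,t}$ with $s + t = 3$: those with $t \leq 2$ map isomorphically, and the single term $E_1^{0,3}$ maps injectively; a diagram chase (or the five-lemma applied degreewise to the filtration on the abutment) yields injectivity of $H^3_{\et}(X,\Gm) \to H^3_{\et}(X - Z,\Gm)$.

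The main obstacle I anticipate is purely bookkeeping: making sure the descent spectral sequence is available in the generality of quasi-separated noetherian DM stacks and that it is functorial for the open immersion $X - Z \hookrightarrow X$ in the way required, so that the induced map of $E_1$-pages is literally the collection of restriction maps $H^\ast_{\et}(U_s,\Gm) \to H^\ast_{\et}(U_s - Z_s, \Gm)$ to which we apply Česnavičius. One should also double-check the codimension claim using the definition of codimension for DM stacks from \cite{osserman2013relativedimensionmorphismsdimension} and its compatibility with étale pullback; this is where one uses that étale morphisms preserve codimension of closed substacks. Once these functoriality and codimension points are in place, the argument is a formal comparison of spectral sequences with no further geometric input. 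Alternatively, and perhaps more cleanly, one can avoid the full simplicial machinery by working with the low-degree exact sequences: the key inputs are that $H^0$ and $H^1$ of $\Gm$ satisfy purity (an isomorphism in degrees $0,1$ for $U_\bullet$ forces one in those degrees for $X$), $H^2$ satisfies purity on each $U_s$, and $H^3$ satisfies injectivity; one assembles these through the truncated descent complex. Either way the geometric content is entirely contained in Česnavičius's theorem, applied levelwise to the presentation.
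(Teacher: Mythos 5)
Your proposal is correct and is essentially the paper's argument: both reduce the statement \'etale-locally to \v{C}esnavi\v{c}ius's scheme-theoretic purity theorem (the paper phrases the reduction via stalks at geometric points, i.e.\ strictly henselian regular local rings, rather than via the global \v{C}ech descent spectral sequence, but this is the same idea). The only points to tidy are that the terms of the \v{C}ech nerve of $U \to X$ are a priori algebraic spaces rather than schemes (so one should bootstrap through the algebraic-space case first), and that the spectral-sequence comparison in total degree $\leq 2$ requires the observation that all differentials affecting those terms stay in rows $t \leq 2$ (incoming differentials come from total degree $\leq 1$); both are routine.
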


\begin{proof}
As explained in the proof of \cite[Theorem 6.1]{CesnaviciusPurityBrauer}, we can work \'etale locally and reduce the desired statement to the case where $X$ is a scheme (or more precisely, the spectrum of a strictly henselian regular local ring), and thus this proposition follows from the result proved there.
\end{proof}

\begin{cor}\label{cor: Brauer groups blow ups}
    Let $\pi\colon \widetilde{X}\rightarrow X$ be a morphism of noetherian and regular DM stacks which is an isomorphism on a dense open substack $U\subset X.$ 
    \begin{enumerate}
        \item The homomorphism $\pi^{*} \colon \Br(X)\rightarrow \Br(\widetilde{X})$ is injective.
        \item If the complement $Z$ of $U$ in $X$ satisfies $\codim_{X}(Z)\geq 2,$ then if $\Br(X)=0,$ also $\Br(\widetilde{X})=0.$ 
    \end{enumerate}
\end{cor}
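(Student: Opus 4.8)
The plan is to derive both parts from the following claim, applied twice: $(\star)$ if $Y$ is a regular noetherian DM stack and $W \subseteq Y$ a dense open substack, then the restriction $\Br(Y) = H^2_{\et}(Y, \mathbb{G}_m) \to H^2_{\et}(W, \mathbb{G}_m) = \Br(W)$ is injective. Granting $(\star)$, set $V := \pi^{-1}(U)$; in the situations we care about (e.g.\ blow-ups along a centre that meets every connected component, where the exceptional locus is a divisor) $V$ is dense in $\widetilde X$, and $\pi$ restricts to an isomorphism $V \xrightarrow{\sim} U$. Restriction to $U$ and $V$ fits into a commutative square whose bottom edge is the isomorphism $(\pi|_V)^* \colon \Br(U) \xrightarrow{\sim} \Br(V)$. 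For part (1): the composite $\Br(X) \xrightarrow{\pi^*} \Br(\widetilde X) \to \Br(V)$ equals $\Br(X) \to \Br(U) \xrightarrow{\sim} \Br(V)$, and the latter is injective by $(\star)$; hence $\pi^*$ is injective. For part (2): if $\codim_X(Z) \ge 2$, then by Proposition \ref{Prop: purity Brauer} the map $\Br(X) \to \Br(U)$ is an isomorphism, so $\Br(X) = 0$ gives $\Br(U) = 0$, hence $\Br(V) = 0$; since $\widetilde X$ is regular and $V \subseteq \widetilde X$ is dense, $(\star)$ forces $\Br(\widetilde X) = 0$.

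It remains to prove $(\star)$. For schemes this is the classical fact that the Brauer group of a regular noetherian scheme embeds into the Brauer group of its generic points; for DM stacks I would proceed as follows. Using Proposition \ref{Prop: purity Brauer} one reduces to the case where $Y \setminus W$ is a regular divisor $D$ (possibly empty): removing first the union of the codimension $\ge 2$ components of $Y \setminus W$ and then the non-regular locus of what remains — both closed of codimension $\ge 2$ in $Y$ — changes neither $\Br(Y)$ nor $W$. Now take $\alpha \in \Br(Y)$ with $\alpha|_W = 0$. As $\alpha$ is torsion, it lifts through the Kummer sequence to some $\widetilde\alpha \in H^2_{\et}(Y, \mu_n)$; since $Y$ is regular, $\Pic(Y) \to \Pic(W)$ is surjective, so subtracting a suitable class in the image of $\Pic(Y)/n$ (which does not alter the image of $\widetilde\alpha$ in $\Br(Y)$) we may assume $\widetilde\alpha|_W = 0$. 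Then $\widetilde\alpha$ lies in the kernel of $H^2_{\et}(Y, \mu_n) \to H^2_{\et}(W, \mu_n)$; by the Gysin (absolute purity) sequence for the regular divisor $D$ this kernel is the image of the pushforward from $H^0_{\et}(D, \mathbb{Z}/n)$, which is spanned by the cycle classes of the components of $D$ and hence contained in the image of $\Pic(Y)/n$ — precisely the kernel of $H^2_{\et}(Y, \mu_n) \to \Br(Y)$. Therefore $\alpha = 0$.

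The heart of the matter is $(\star)$, and within it, having the Kummer sequence and the Gysin/absolute-purity formalism for regular DM stacks with $\mu_n$-coefficients available; these statements are étale-local on $Y$, so they reduce to the case of (strictly henselian) regular local schemes just as in the proof of Proposition \ref{Prop: purity Brauer} (in mixed characteristic one treats the prime-to-$p$ and $p$-parts separately, or works with fppf cohomology, which is routine). A minor additional point, needed for the reduction used above, is to know that $V = \pi^{-1}(U)$ is dense in $\widetilde X$: this is automatic for the blow-ups of interest but can fail for a $\pi$ that is merely an isomorphism over a dense open (for instance if $\widetilde X$ has extra components lying over $Z$), so the appropriate hypothesis on $\pi$ must be kept in mind. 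Alternatively, if a statement to the effect of $(\star)$ is available in the literature for Brauer groups of regular DM stacks, one may simply quote it.
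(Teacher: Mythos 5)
Your proposal is correct and, as far as the corollary itself is concerned, follows exactly the paper's route: the paper deduces both parts from your statement $(\star)$ together with Proposition \ref{Prop: purity Brauer}, except that it does not reprove $(\star)$ but quotes it as \cite[Proposition 2.5 (iv)]{AntieauMeierBrauerElliptic} (injectivity of restriction of Brauer groups to dense open substacks of noetherian regular DM stacks). Two comments on where you go beyond the paper. First, your sketched proof of $(\star)$ via Kummer, surjectivity of $\Pic(Y)\to\Pic(W)$, and the Gysin sequence for a regular divisor is a genuinely different argument from the classical one; the standard proof (and the stack version in Antieau--Meier) instead uses the exact sequence $0\to\mathbb{G}_m\to j_*\mathbb{G}_{m,W}\to i_*\underline{\ZZ}_D\to 0$ on a regular stack, which treats all torsion uniformly. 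Your route needs a codimension-one Gysin isomorphism for $\mu_n$ with $n$ not invertible (the $p$-part in residue characteristic $p$), which is not covered by Gabber's theorem and is only ``routine'' if one deduces it in the fppf topology from that same $\mathbb{G}_m$-sequence --- at which point one may as well run the classical argument directly; there is also a small technical point in your reduction (openness of the regular locus of $D$ requires excellence or a similar hypothesis). So the paper's choice to quote the reference is the cleaner move, though your argument is repairable. Second, your observation that part (2) requires $\pi^{-1}(U)$ to be dense in $\widetilde{X}$ is well taken: the paper's one-line proof silently assumes this when asserting that $\Br(\widetilde{X})$ embeds into $\Br(U)$, and as you note the statement as literally written would fail for a $\pi$ whose source has an extra component lying over $Z$; in the intended applications (blow-ups of irreducible smooth stacks in Proposition \ref{Prop: vanishing g 1 n 2 3 4 5 6}) the hypothesis holds automatically.
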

\begin{proof}
    The first part follows because both $\Br(X)$ and $\Br(\widetilde{X})$ embed into $\Br(U)$ (\cite[Proposition 2.5 (iv)]{AntieauMeierBrauerElliptic}). For the second part, purity of the Brauer group (Proposition \ref{Prop: purity Brauer}) implies that $\Br(U)=0,$ so that by \cite[Proposition 2.5 (iv)]{AntieauMeierBrauerElliptic} again, we see that $\Br(\widetilde{X})=0,$ as desired.
\end{proof}

Next we want to record some extensions to the setting of DM stacks of statements in étale cohomology which are classical for schemes; these results are most probably well-known to the experts.

\begin{lem}\label{lem: Galois rep is everywhere unramified and crystalline at p}
    Let $X\rightarrow \Spec(\ZZ)$ be a proper and smooth DM stack.
    The $\Gal(\overline{\Q}/\Q)$-representation 
    $$
    H^{i}_{\et}(X_{\overline{\Q}},\mathbb{Q}_{p})
    $$
    is unramified at $\ell\neq p$ and crystalline at $p.$
\end{lem}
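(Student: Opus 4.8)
The plan is to reduce the statement for Deligne--Mumford stacks to the already-known analogue for proper smooth schemes over $\ZZ$, using an \'etale hypercover. First I would choose a proper smooth scheme $Y$ over $\Spec(\ZZ)$ with a surjective \'etale morphism $Y \to X$ (this exists Zariski-locally on $\Spec(\ZZ)$, and after spreading out one may arrange it over all of $\Spec(\ZZ)$, or simply argue place-by-place since the conclusion is local on the base); more robustly, pick a smooth hypercover $Y_\bullet \to X$ by proper smooth $\ZZ$-schemes, obtained by taking an \'etale atlas $Y_0 \to X$ by a scheme and forming the Čech nerve, noting each $Y_p = Y_0 \times_X \cdots \times_X Y_0$ is again a scheme, proper and smooth over $\ZZ$ since $X$ is and the $Y_p \to X$ are \'etale. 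Cohomological descent then gives a convergent spectral sequence
\[
E_1^{p,q} = H^q_{\et}((Y_p)_{\overline{\Q}}, \Q_p) \Rightarrow H^{p+q}_{\et}(X_{\overline{\Q}}, \Q_p)
\]
of $\Gal(\overline{\Q}/\Q)$-representations.

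The key point is that each term $E_1^{p,q} = H^q_{\et}((Y_p)_{\overline{\Q}}, \Q_p)$ is, by the classical theory for schemes (smooth proper base change for the unramifiedness at $\ell \neq p$, and the $C_{\cris}$/crystallinity statement of Faltings--Tsuji et al.\ for the $p$-adic comparison at $p$), unramified at all $\ell \neq p$ and crystalline at $p$. Both properties are stable under subquotients and extensions in the category of $\Q_p$-representations of $\Gal(\overline{\Q}/\Q)$ (for ``crystalline'' this is because the full subcategory of crystalline representations is stable under subobjects, quotients and extensions). Hence every subquotient of $E_1^{p,q}$ appearing on later pages retains these properties, and the abutment $H^{p+q}_{\et}(X_{\overline{\Q}}, \Q_p)$, being a successive extension of the $E_\infty^{p,q}$ with $p+q = i$, inherits them as well. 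One should check the spectral sequence degenerates in a controlled range or simply note that the relevant finiteness (each $H^q$ is finite-dimensional, and for fixed $i+j = i$ only finitely many $(p,q)$ contribute since $X_{\overline\Q}$ has bounded cohomological dimension) makes the extension argument rigorous.

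I would carry this out as follows: (i) construct the hypercover $Y_\bullet \to X$ and verify each $Y_p$ is proper and smooth over $\ZZ$; (ii) invoke cohomological descent for \'etale cohomology with $\Q_p$-coefficients to get the spectral sequence, checking it is $\Gal(\overline{\Q}/\Q)$-equivariant; (iii) apply the scheme case to the $E_1$-terms; (iv) conclude by stability of ``unramified at $\ell$'' and ``crystalline'' under subquotients and extensions. Alternatively, and perhaps more cleanly, one can cite the extension of $p$-adic Hodge theory to stacks of Kubrak--Prikhodko referenced later in the paper, which directly yields crystallinity, together with smooth proper base change for the unramifiedness; but the hypercover argument is elementary and self-contained.

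The main obstacle I anticipate is purely bookkeeping: making sure that cohomological descent is available with $\Q_p = \varprojlim \ZZ/p^n$ coefficients in the stacky setting and is genuinely Galois-equivariant, and that passing to the limit over $n$ does not destroy the crystalline property (one works with $\ZZ/p^n$ or $\ZZ_p$ coefficients first, where base change and comparison theorems are cleanest, and then inverts $p$). None of this is conceptually hard, but it requires care with the formulation of descent and with the (standard) fact that $H^q_{\et}((Y_p)_{\overline\Q}, \ZZ_p)$ is a finitely generated $\ZZ_p$-module so that the limit behaves well. Everything else is a direct appeal to the scheme-theoretic results, which by the date of this paper are entirely classical.
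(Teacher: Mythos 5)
Your fallback route (``cite Kubrak--Prikhodko for crystallinity and smooth proper base change for unramifiedness'') is essentially what the paper actually does: its entire proof is a citation of Bogaart--Edixhoven for unramifiedness at $\ell\neq p$ and of Kubrak--Prikhodko for crystallinity at $p$. The hypercover argument you present as the ``elementary and self-contained'' main route, however, has two genuine gaps. First, an \'etale atlas $Y_0\to X$ by a scheme is essentially never proper over $\Spec(\ZZ)$ when $X$ is a non-schematic proper DM stack: an \'etale morphism is proper only if it is finite, and a stack such as $\overline{\mathcal{M}}_{g}$ admits no finite \'etale cover by a scheme. So the terms $Y_p$ of your \v{C}ech nerve are smooth but not proper over $\ZZ$, and neither smooth proper base change nor the crystalline comparison theorem applies to the $E_1$-terms. (This is exactly the obstruction that Proposition \ref{prop: Riemann Hypothesis Weil DM stacks} in the paper has to circumvent via the Laumon--Moret-Bailly finite generically \'etale cover, de Jong's alterations and a trace argument --- and that device produces a smooth proper scheme only over a field, not over $\ZZ$.)

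Second, and independently, neither ``unramified at $\ell$'' nor ``crystalline at $p$'' is stable under extensions of $\Q_p$-representations of $\Gal(\Qbar/\Q)$, so your step (iv) fails even granting the hypercover. For unramifiedness: the Tate module of a Tate elliptic curve over $\Q_\ell$ is an extension of $\Q_p$ by $\Q_p(1)$, both unramified at $\ell\neq p$, on which inertia acts through a nontrivial unipotent matrix. For crystallinity: the nonsplit self-extension of the trivial representation classified by $\log\chi_{\cyc}\in H^1(\Gal(\Qpbar/\Q_p),\Q_p)$ is not even Hodge--Tate, and more generally $H^1_f\subsetneq H^1$. (Both classes are closed under subquotients, which is why the graded pieces of the abutment would be fine, but reassembling the filtration is precisely where the argument breaks.) These failures are the reason the comparison theorems for simplicial schemes and for stacks are genuine theorems (Tsuji, Kubrak--Prikhodko) rather than formal consequences of the scheme case, and why the paper simply cites them.
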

\begin{proof}
    The claim about unramifiedness at $\ell\neq p$ follows from \cite[Proposition 3.1]{BogaartEdixhoven} and about being crystalline at $p$ from \cite[Theorem 4.3.25, Remark 4.1.15]{KubrakPrikhodkopHodgeStacks}.
\end{proof}

\begin{lem}\label{lem: trace morphism for generically finite etale morphism}
Let $k$ be a finite field or an algebraically closed field.
Let $\ell$ be a prime invertible in $k$.
Let $f \colon Y \to X$ be a morphism of separated DM stacks over $\Spec (k)$.
We assume that $X$ and $Y$ are smooth pure of dimension $d$ over $\Spec (k)$.
We further assume that $f$ is representable by schemes and proper, and that $f$ is finite \'etale of constant degree $m$ over a dense open substack of $X$.
We set $\Lambda :=\ZZ/\ell^n \ZZ$, and let $\Lambda_X$ and $\Lambda_Y$ denote the corresponding constant sheaves on $X$ and $Y$, respectively.
\begin{enumerate}
    \item The dualizing complexes of $X$ and $Y$ (in the sense of \cite{Laszlo-Olsson}) are isomorphic to
    $\Lambda_X(d)[2d]$ and 
    $\Lambda_Y(d)[2d]$, respectively.
    We have a canonical trace morphism
    \[
    \mathrm{tr}_f \colon R f_* \Lambda_Y(d)[2d] \to \Lambda_X(d)[2d].
    \]
    \item The composition
\[
\Phi \colon \Lambda_X(d) \to
R f_*\Lambda_Y(d)
\overset{\mathrm{tr}_f[-2d]}{\longrightarrow} \Lambda_X(d)
\]
is equal to the multiplication by $m$.
\end{enumerate}
\end{lem}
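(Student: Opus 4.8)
The plan is to build the trace morphism and verify the multiplication-by-$m$ identity by reducing to the well-understood scheme case via an étale presentation, together with a shrinking argument that concentrates all the geometry over the dense open locus where $f$ is finite étale. First I would establish part (1): since $X$ and $Y$ are smooth of pure dimension $d$ over $k$ (finite or algebraically closed), the absolute purity / Poincaré duality formalism for DM stacks of Laszlo–Olsson \cite{Laszlo-Olsson} identifies the dualizing complexes with $\Lambda_X(d)[2d]$ and $\Lambda_Y(d)[2d]$; concretely one checks this on a smooth étale presentation $U \to X$ by a smooth $k$-scheme, where it is the classical statement, and uses that the dualizing complex is étale-local and compatible with smooth pullback. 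The trace morphism $\mathrm{tr}_f$ is then the counit of the adjunction $(Rf_!, f^!) = (Rf_*, f^!)$ (using that $f$ is proper and representable) applied to the identification $f^! \Lambda_X(d)[2d] \simeq \Lambda_Y(d)[2d]$ coming from the fact that $f$ is a morphism between smooth stacks of the same pure dimension — i.e. $\mathrm{tr}_f$ is the image under $Rf_*$ of the canonical map $\Lambda_Y(d)[2d] = f^!\Lambda_X(d)[2d] \to f^! \cdot$ ... composed with the adjunction counit $Rf_* f^! \to \mathrm{id}$. That this is representable by schemes is exactly what lets us invoke the Laszlo–Olsson six-functor package without subtleties.

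For part (2), the key point is that $\Phi$ is a map of sheaves $\Lambda_X(d) \to \Lambda_X(d)$ on a smooth, hence in particular normal and connected-on-each-component, stack; such a map is given by multiplication by a locally constant function, so it suffices to compute $\Phi$ over one dense open substack of each connected component. I would take $V \subset X$ to be the dense open locus over which $f$ is finite étale of degree $m$, with preimage $W = f^{-1}(V) \to V$ finite étale of degree $m$. Over $V$, the trace morphism for a finite étale morphism is the classical one, and the composite $\Lambda_V \to f_* \Lambda_W \xrightarrow{\mathrm{tr}} \Lambda_V$ (Tate-twisting and shifting are harmless here since $f$ étale means $f^!\Lambda = \Lambda$ on the nose over $V$) is multiplication by the degree $m$ — this is standard and, again, checkable on an étale presentation where it is the familiar identity for finite étale covers of schemes. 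The one thing to be careful about is that the trace morphism of part (1), built via duality, restricts over $V$ to this classical finite-étale trace; this follows from the compatibility of the duality trace with open immersions (the formation of $f^!$ and the counit $Rf_*f^! \to \mathrm{id}$ commutes with restriction to opens) together with the fact that for a finite étale morphism the duality trace coincides with the elementary trace, which one sees by reducing to the scheme case.

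The main obstacle I anticipate is purely a matter of citing/assembling the six-functor and duality formalism for DM stacks correctly — in particular pinning down that $\mathrm{tr}_f$ as defined abstractly via $f^!$ and the adjunction agrees, after restriction to the good open locus, with the naive trace of a finite étale cover, and that both identifications are compatible with smooth étale base change so that the final "multiplication by a locally constant function, computed on a dense open" argument goes through. None of this is deep, but it requires care because $f$ is only generically finite étale, so $f^!\Lambda_X$ is not globally $\Lambda_Y$ — the whole strategy is to never need the global structure of $f^!$ beyond the existence of the trace, and to do the actual computation over $V$ where everything is classical. A secondary, minor point is checking that the complement of $V$ really has the property that $\Lambda_X(d) \to \Lambda_X(d)$ is determined by its restriction to $V$; this is immediate from $X$ smooth (hence each component irreducible) and $\Lambda$ a constant sheaf, so $\mathrm{Hom}(\Lambda_X(d),\Lambda_X(d)) = H^0_{\et}(X,\Lambda)$ injects into its value on any dense open.
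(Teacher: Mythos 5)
Your proposal is correct and follows essentially the same route as the paper: part (1) via the Laszlo--Olsson dualizing-complex identification and the adjunction counit for $(Rf_!,f^!)=(Rf_*,f^!)$, and part (2) by reducing (étale-locally / on a presentation) to the observation that an endomorphism of $\Lambda_X(d)$ is detected on the dense open where $f$ is finite étale, where the trace is the classical multiplication-by-degree map. The compatibility points you flag (duality trace vs.\ naive finite-étale trace, restriction to opens) are exactly the ones the paper's proof also relies on implicitly.
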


\begin{proof}
    (1) The first assertion follows from \cite[Corollary 4.6.2]{Laszlo-Olsson}.
    We recall that the functors $Rf_!$ and $f^!$ are defined in \cite[Definition 4.4.1]{Laszlo-Olsson}, which satisfy the usual adjunction property; see \cite[Proposition 4.4.2]{Laszlo-Olsson}.
    We have $f^!\Lambda_X(d)[2d] = \Lambda_Y(d)[2d]$ (see \cite[(4.4.i)]{Laszlo-Olsson}).
    By \cite[Proposition 5.2.1]{Laszlo-Olsson}, we have
    $
    Rf_!=Rf_*.
    $
    Then $\mathrm{tr}_f$ is defined as the following composition
    \[
    R f_* \Lambda_Y(d)[2d] = Rf_!\Lambda_Y(d)[2d] = Rf_!f^!\Lambda_X(d)[2d] \to \Lambda_X(d)[2d]
    \]
    where the last morphism is induced by adjunction.

    (2) We may work \'etale locally on $X$.
    We may thus assume that $X$ is a scheme and $k$ is algebraically closed.
    Then $Y$ is also a scheme.
    It suffices to show that
    the induced homomorphism
    \[
    H^0_{\et}(X, \Lambda(d)) \overset{\Phi}{\to} H^0_{\et}(X, \Lambda(d))
    \]
    is equal to the multiplication by $m$.
    Let $U \subset X$ be an open dense subscheme such that $f$ is finite \'etale over $U$.
    Since we have
    $H^0_{\et}(X, \Lambda(d))=H^0_{\et}(U, \Lambda(d))$,
    the assertion now follows from the corresponding statement for the trace morphisms associated with finite \'etale morphisms of schemes.
\end{proof}

Lemma \ref{lem: trace morphism for generically finite etale morphism} enables us to deduce the Riemann Hypothesis part of Weil conjectures for DM stacks.

\begin{prop}\label{prop: Riemann Hypothesis Weil DM stacks}
    Let
    $X$ be a proper and smooth DM stack over $\mathbb{F}_{q}$.
    The eigenvalues of the geometric Frobenius $\varphi_{q}\in \Gal(\overline{\mathbb{F}}_{q}/\mathbb{F}_{q})$ acting on
    $H^{i}_{\et}(X_{\overline{\mathbb{F}}_{q}},\mathbb{Q}_{\ell})$
    are algebraic integers with complex absolute value $q^{i/2}$.
\end{prop}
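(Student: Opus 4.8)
The strategy is to reduce to the Deligne's theorem for schemes by a standard dévissage using a generically finite étale cover by a scheme. First I would invoke the fact that a separated Deligne–Mumford stack $X$ of finite type over $\mathbb{F}_q$ admits a finite surjective morphism $f\colon Y\to X$ from a scheme $Y$, and moreover one can arrange $Y$ to be smooth and $f$ to be finite étale over a dense open substack of $X$ (generic smoothness over the perfect base field, plus passing to a suitable cover so that the generic fiber is étale; alternatively use the coarse space together with a $K$-theory/normalization argument). Having fixed such a cover, Lemma~\ref{lem: trace morphism for generically finite etale morphism} supplies a trace morphism $\mathrm{tr}_f\colon Rf_*\Lambda_Y(d)[2d]\to\Lambda_X(d)[2d]$ whose composition with the pullback is multiplication by the generic degree $m$. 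Passing to $\mathbb{Q}_\ell$-coefficients (taking the inverse limit over $n$ and inverting $\ell$), we obtain that $f^*\colon H^i_{\et}(X_{\overline{\mathbb{F}}_q},\mathbb{Q}_\ell)\to H^i_{\et}(Y_{\overline{\mathbb{F}}_q},\mathbb{Q}_\ell)$ is split injective as a map of $\Gal(\overline{\mathbb{F}}_q/\mathbb{F}_q)$-modules, via $m^{-1}\mathrm{tr}_f$.

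With this in place, the cohomology of $X$ is a Galois-equivariant direct summand of the cohomology of the smooth scheme $Y$. Here I should be a little careful: $Y$ need not be proper even if $X$ is (the cover $f$ is only generically étale), so I cannot directly apply Deligne's Weil~II to $H^i(Y)$. There are two ways around this. One is to take $Y$ proper and smooth: by resolution-type arguments or by de~Jong alterations one can find a proper smooth $Y$ with a proper surjective generically finite $f\colon Y\to X$ that is generically étale — de~Jong's theorem produces exactly an alteration of this kind, and an alteration of a proper smooth DM stack can be taken to be a proper smooth scheme. Then $H^i(Y_{\overline{\mathbb{F}}_q},\mathbb{Q}_\ell)$ is pure of weight $i$ by Deligne's original Weil conjectures for proper smooth schemes, and the splitting above shows $H^i(X_{\overline{\mathbb{F}}_q},\mathbb{Q}_\ell)$ is pure of weight $i$ as well, i.e.\ the Frobenius eigenvalues are Weil numbers of absolute value $q^{i/2}$. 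The other (equivalent, and arguably cleaner) route: even without properness of $Y$, Weil~II gives that $H^i_c(Y)$ is mixed of weights $\le i$ and $H^i(Y)$ is mixed of weights $\ge i$; combining the two summand inclusions $H^i(X)\hookrightarrow H^i(Y)$ and (by Poincaré duality on the smooth proper stack $X$, which exists by part~(1) of Lemma~\ref{lem: trace morphism for generically finite etale morphism}) a dual inclusion pinning the weights from the other side forces purity.

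For the "algebraic integer" part, I would argue as follows: the eigenvalues of geometric Frobenius on $H^i_{\et}(X_{\overline{\mathbb{F}}_q},\mathbb{Q}_\ell)$ lie among those on $H^i_{\et}(Y_{\overline{\mathbb{F}}_q},\mathbb{Q}_\ell)$ by the summand inclusion, and for the (proper smooth, or merely finite type) scheme $Y$ these are algebraic integers by the classical theory — e.g.\ via comparison with $H^i_c$ and the fact that Frobenius acts integrally on étale cohomology with $\mathbb{Z}_\ell$-coefficients for all but finitely many $\ell$, together with independence of $\ell$ and the product formula, exactly as in the scheme case. The main obstacle in the whole argument is the very first step: producing a good cover $f\colon Y\to X$ by a scheme that is proper, smooth, and generically finite étale — this is where one needs de~Jong's alteration theorem for stacks (or at least for the coarse space, followed by a base-change/normalization cleanup to restore étaleness generically) rather than anything elementary; everything afterwards is formal manipulation with trace maps and the known scheme-theoretic Weil conjectures.
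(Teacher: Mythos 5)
Your proposal is correct and follows essentially the same route as the paper: a finite surjective generically \'etale cover of $X$ by a scheme (the paper cites \cite[Th\'eor\`eme 16.6]{Laumon-Moret-Bailly}), followed by a de~Jong alteration to obtain a proper smooth scheme $Y$ with a proper, generically finite \'etale, schematic map $f\colon Y\to X$, then the trace morphism of Lemma~\ref{lem: trace morphism for generically finite etale morphism} to realize $H^i_{\et}(X_{\overline{\mathbb{F}}_q},\QQ_\ell)$ as a Galois-equivariant direct summand of $H^i_{\et}(Y_{\overline{\mathbb{F}}_q},\QQ_\ell)$, and finally Deligne's theorem for proper smooth schemes. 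The paper takes exactly your first (proper smooth $Y$) option, so the Weil~II/duality detour is not needed.
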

\begin{proof}
    We may assume that $X$ is connected.
    Since $X$ is smooth, it follows that $X$ is irreducible.
    By \cite[Th\'eor\`eme 16.6]{Laumon-Moret-Bailly}, there exists a finite, surjective, generically \'etale morphism
    $Z \to X$ with $Z$ a scheme.
    After replacing $Z$ by an irreducible component of it (with the reduced induced structure) which dominates $X$, we may assume that $Z$ is integral.
    Since $\mathbb{F}_{q}$ is perfect,
    by \cite[Theorem 4.1]{deJongAlteration}, there exists a proper and generically finite \'etale morphism
    $Y \to Z$
    such that $Y$ is a connected proper smooth scheme over $\mathbb{F}_{q}$.
    The composition 
    $f \colon Y \to X$ satisfies the conditions in Lemma \ref{lem: trace morphism for generically finite etale morphism}.
    Thus, the trace morphism induces a homomorphism
    \[
    H^i_{\et}(Y_{\overline{\mathbb{F}}_{q}}, \QQ_\ell) \to H^i_{\et}(X_{\overline{\mathbb{F}}_{q}}, \QQ_\ell)
    \]
    such that the composition
    \[
    H^i_{\et}(X_{\overline{\mathbb{F}}_{q}}, \QQ_\ell) \overset{f^*}{\to} H^i_{\et}(Y_{\overline{\mathbb{F}}_{q}}, \QQ_\ell) \to H^i_{\et}(X_{\overline{\mathbb{F}}_{q}}, \QQ_\ell)
    \]
    is equal to the multiplication by some positive integer, and hence is an isomorphism.
    It follows that
    $H^i_{\et}(X_{\overline{\mathbb{F}}_{q}}, \QQ_\ell)$
    is isomorphic to a direct summand of
    $H^i_{\et}(Y_{\overline{\mathbb{F}}_{q}}, \QQ_\ell)$
    as a representation of $\Gal(\overline{\mathbb{F}}_{q}/\mathbb{F}_{q})$.
    Therefore the desired assertion follows from the Weil conjecture for schemes proved by Deligne \cite{WeilI, WeilII}.
\end{proof}

We also record the following statement which will facilitate the passage between statements concerning the Brauer group over $\overline{\Q}$ and $\mathbb{C}.$

\begin{lem}\label{lem: short exact sequence with H3}
    Let $K$ be an algebraically closed field of characteristic $0$ such that $K\subset \mathbb{C}.$
    Let $X\rightarrow \Spec(K)$ be a connected proper smooth DM stack. 
    There is an exact sequence
    \[
    0 \to \Br(X)^{0} \to \Br(X) \to H^{3}(X_{\mathbb{C}}^{\an},\ZZ)_{\tors} \to 0,
    \]
    where $\Br(X)^{0}\simeq (\Q/\ZZ)^{b_{2}-\rho}$ with $b_{2}(X)=\dim_{\Q_{\ell}}H^{2}_{\et}(X, \Q_{\ell})$ and $\rho(X)$ is the rank of the N\'eron-Severi group $\NS(X)$.
    Here $X_{\mathbb{C}}^{\an}$ is the complex analytification of $X_{\mathbb{C}}$.
\end{lem}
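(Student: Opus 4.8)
The plan is to realize the short exact sequence as the torsion-subgroup interpretation of the Kummer sequence, combined with comparison between étale and singular cohomology. First I would reduce to the case $K = \mathbb{C}$: since $X$ is a finite-type DM stack over $K$ and $K \subset \mathbb{C}$, smooth base change (valid for DM stacks, e.g. via étale presentations) gives $H^i_{\et}(X, \mathbb{Q}_\ell) \cong H^i_{\et}(X_{\mathbb{C}}, \mathbb{Q}_\ell)$ and similarly with finite coefficients, hence also $\Br(X) \cong \Br(X_{\mathbb{C}})$, $\NS(X) \cong \NS(X_{\mathbb{C}})$ and $b_2(X) = b_2(X_{\mathbb{C}})$; so it suffices to treat $X$ over $\mathbb{C}$. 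Then I would invoke the comparison isomorphism between étale cohomology of $X_{\mathbb{C}}$ with torsion coefficients and the singular cohomology of the complex analytic stack $X_{\mathbb{C}}^{\an}$ (Riemann existence for DM stacks), which gives $H^i_{\et}(X_{\mathbb{C}}, \mu_{\ell^n}) \cong H^i(X_{\mathbb{C}}^{\an}, \mathbb{Z}/\ell^n\mathbb{Z})$ compatibly in $n$, hence $H^i_{\et}(X_{\mathbb{C}}, \mathbb{Z}_\ell(1)) \cong H^i(X_{\mathbb{C}}^{\an}, \mathbb{Z}_\ell)$ after passing to the limit, and after summing over $\ell$ and tensoring appropriately, a statement about $H^i$ with $\mathbb{Q}/\mathbb{Z}(1)$-coefficients.

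Next I would write down the relevant piece of the Kummer sequence $1 \to \mu_n \to \mathbb{G}_m \xrightarrow{n} \mathbb{G}_m \to 1$ on the étale site of $X$. Since $X$ is regular, $\Br(X) = H^2_{\et}(X, \mathbb{G}_m)$, and the long exact sequence yields
\[
0 \to \Pic(X)/n \to H^2_{\et}(X, \mu_n) \to \Br(X)[n] \to 0.
\]
Taking the colimit over $n$ and using $\Pic(X) \otimes \mathbb{Q}/\mathbb{Z} \to \colim_n H^2_{\et}(X,\mu_n)$, together with $\Br(X)$ being torsion (it is the torsion of $H^2_{\et}(X,\mathbb{G}_m)$, and in fact equals it here), gives an exact sequence
\[
\Pic(X) \otimes \mathbb{Q}/\mathbb{Z} \to H^2_{\et}(X, \mathbb{Q}/\mathbb{Z}(1)) \to \Br(X) \to 0.
\]
Identifying $H^2_{\et}(X, \mathbb{Q}/\mathbb{Z}(1))$ with $H^2(X_{\mathbb{C}}^{\an}, \mathbb{Q}/\mathbb{Z})$ and invoking the universal coefficient theorem, $H^2(X_{\mathbb{C}}^{\an},\mathbb{Q}/\mathbb{Z})$ sits in an exact sequence with $H^2(X_{\mathbb{C}}^{\an},\mathbb{Z}) \otimes \mathbb{Q}/\mathbb{Z}$ and $H^3(X_{\mathbb{C}}^{\an},\mathbb{Z})_{\tors}$. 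The image of $\Pic(X) \otimes \mathbb{Q}/\mathbb{Z}$ is exactly the divisible part coming from $\NS(X) \otimes \mathbb{Q}/\mathbb{Z}$ inside $H^2(X_{\mathbb{C}}^{\an},\mathbb{Z}) \otimes \mathbb{Q}/\mathbb{Z}$ (the kernel $\Pic^0$ contributes nothing after $\otimes \mathbb{Q}/\mathbb{Z}$ since it is divisible), so the cokernel is $(H^2/\NS)\otimes\mathbb{Q}/\mathbb{Z}$, which is $(\mathbb{Q}/\mathbb{Z})^{b_2 - \rho}$, extended by $H^3_{\tors}$. Defining $\Br(X)^0$ to be the image of $H^2(X_{\mathbb{C}}^{\an},\mathbb{Z})\otimes\mathbb{Q}/\mathbb{Z}$ in $\Br(X)$ then produces the asserted sequence, with $\Br(X)^0 \cong (\mathbb{Q}/\mathbb{Z})^{b_2-\rho}$.

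The technical heart, and the step I expect to require the most care, is establishing the needed comparison and base-change statements for DM stacks rather than schemes: namely (i) smooth/Lefschetz base change for étale cohomology of DM stacks across algebraically closed fields, (ii) the Riemann existence theorem comparing $H^*_{\et}(X_{\mathbb{C}}, \mathbb{Z}/n)$ with $H^*(X_{\mathbb{C}}^{\an}, \mathbb{Z}/n)$, and (iii) finite generation of $H^*(X_{\mathbb{C}}^{\an},\mathbb{Z})$ so that the $\colim$/universal-coefficient manipulations behave well and $b_2$ is finite. For (i) and (ii) one works via a smooth hypercover by schemes and applies the scheme-level statements levelwise together with cohomological descent (or cites Laszlo–Olsson and the general theory of analytification of DM stacks); for (iii) one uses properness. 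Also one should note $\Pic^0(X)$ is divisible so it drops out, and that $\NS(X)$ is finitely generated of rank $\rho$. Once these foundational facts are in place, the rest is the standard Kummer-sequence bookkeeping recalled above, identical to the scheme case (cf. the treatment for smooth projective varieties).
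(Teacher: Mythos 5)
Your argument is correct and follows essentially the same route as the paper: the Kummer sequence, divisibility of $\Pic^{0}$ (so that $\Pic(X)\otimes\Q/\ZZ=\NS(X)\otimes\Q/\ZZ$), and the étale–Betti comparison for DM stacks, with the only cosmetic difference that you take direct limits and invoke the topological universal coefficient theorem where the paper passes to the Tate modules $T_{\ell}(\Br(X))$. The stack-specific inputs you flag as the technical heart — in particular representability and properness of $\underline{\Pic}^{\circ}_{X/K}$ (whence divisibility of its $K$-points) and finite generation of $\NS(X)$ — are exactly what the paper supplies by citing Brochard, so nothing essential is missing.
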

\begin{proof}
    For schemes this is \cite[Proposition 4.2.6 (ii), (iii)]{BrauerGrothendieckBook}. 
    Let us quickly explain how to adapt this to the setting here.
    
    By \cite[Théorème 4.3.1]{BrochardPicardFunctorStack},
    the identity component
    $\underline{\Pic}^{\circ}_{X/K}$ is a connected proper group scheme over $K$, so that $\underline{\Pic}^{\circ}_{X/K,\red}$ is an abelian variety over $K$.
    (In fact, since $K$ is characteristic $0$, we have $\underline{\Pic}^{\circ}_{X/K}=\underline{\Pic}^{\circ}_{X/K,\red}$ by Cartier's theorem, but we do not need this fact.)
    The Kummer exact sequence together with divisibility of $\underline{\Pic}^{\circ}_{X/K,\red}(K)$ gives
    \[
    0 \to \NS(X)/\ell^{n} \to H^{2}_{\et}(X,\mu_{\ell^{n}}) \to \Br(X)[\ell^{n}] \to 0,
    \]
    for any prime $\ell$. 
    This is an exact sequence of finite abelian groups (\cite[Theorem 9.10]{OlssonSheavesOnArtinStacks}), so that passing to the limit we get an exact sequence
    \[
    0 \to \NS(X)\otimes_{\ZZ} \ZZ_{\ell} \to H^{2}_{\et}(X,\ZZ_{\ell}(1)) \to T_{\ell}(\Br(X)) \to 0.
    \]
    As $T_{\ell}(\Br(X))=\Hom(\Q_{\ell}/\ZZ_{\ell},\Br(X))$ this is a finite free $\ZZ_{\ell}$-module, so it follows that $T_{\ell}(\Br(X))=\ZZ_{\ell}^{b_{2}-\rho}.$
    Looking now at the Kummer exact sequence for $H^{3}$ and using that $\Br(X)=H^{2}(X,\mathbb{G}_{m})$ is torsion (\cite[Proposition 2.5 (iii)]{AntieauMeierBrauerElliptic}) and running over all $\ell$, we obtain an exact sequence
    \[
    0 \to (\Q/\ZZ)^{b_{2}-\rho} \to \Br(X) \to \bigoplus_{\ell} H^{3}_{\et}(X,\ZZ_{\ell}(1))_{\tors} \to 0.
    \]
    Using independence of choice of algebraically closed base field (use \cite[Tag 07BV]{stacks} and smooth base change \cite[Proposition 2.12]{ZhengEtCohoDM}) and étale-Betti comparison (see e.g.\ \cite[Proposition 4.1.6]{KubrakPrikhodkopHodgeStacks}), we obtain that
    $
    \bigoplus_{\ell} H^{3}_{\et}(X,\ZZ_{\ell}(1))_{\tors}\simeq H^{3}(X_{\mathbb{C}}^{\an},\ZZ)_{\tors}.
    $
\end{proof}

The next result is in the setting of schemes a classical result due to Abrashkin and Fontaine (\cite{Abrashkin90}, \cite{FontainePropreLisseZ}). 

\begin{thm}\label{thm: FontaineAbrashkinDMStacks}
    Let $X\rightarrow \Spec(\ZZ)$ be a proper and smooth DM stack. Then 
    $$
    H^{i}(X_{\mathbb{C}},\Omega^{j}_{X_{\mathbb{C}}})=0
    $$
    for all $i+j\leq 3,$ $i\neq j.$
\end{thm}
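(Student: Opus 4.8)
**The plan is to adapt the original Abrashkin–Fontaine argument, replacing its three essential inputs with their Deligne–Mumford counterparts, all of which are now available.** Recall that for a proper smooth scheme $\mathcal X$ over $\ZZ$, the vanishing of $H^i(\mathcal X_{\mathbb C},\Omega^j)$ for $i+j\leq 3$, $i\neq j$ proceeds in two stages: first one shows that the associated $p$-adic Galois representations $H^n_{\et}(\mathcal X_{\overline{\mathbb Q}},\QQ_p)$ have extremely constrained Hodge–Tate weights (forced to be $0$ in a range, because the representation is crystalline everywhere and unramified everywhere, so by Fontaine's bound on the discriminant / the Odlyzko bounds it cannot have two distinct Hodge–Tate weights once the dimension/weight is small enough); second one transports this back to Hodge numbers via $p$-adic Hodge theory (the Hodge–Tate decomposition) and the degeneration of the Hodge-to-de Rham spectral sequence. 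So the skeleton I would write is: \textbf{Step 1} — reduce to a statement about étale cohomology; \textbf{Step 2} — invoke everywhere-unramified-plus-crystalline; \textbf{Step 3} — run Fontaine's discriminant estimate; \textbf{Step 4} — translate to Hodge numbers.

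\textbf{Step 1 (reduction to cohomology of the generic fibre).} First I would fix the Hodge number $h^{i,j}(X_{\mathbb C})=\dim_{\mathbb C}H^i(X_{\mathbb C},\Omega^j_{X_{\mathbb C}})$ and note, using that $X$ is proper and smooth over $\Spec(\ZZ)$ so that the Hodge cohomology sheaves $R^if_*\Omega^j_{X/\ZZ}$ are finite locally free (this is where properness and smoothness over $\ZZ$, and the stack version of cohomology and base change, enter — for DM stacks one can pass to a smooth presentation or cite the Kubrak–Prikhodko / Olsson machinery), that $h^{i,j}$ is the same computed over $\mathbb C$ or over any other characteristic-$0$ base point, and that the Hodge-to-de Rham spectral sequence degenerates (again over $\mathbb C$, hence everywhere in char $0$, using the stacky GAGA/Hodge theory of Kubrak–Prikhodko, citing \cite{KubrakPrikhodkopHodgeStacks} as the excerpt already does). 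This gives $\dim H^n_{\dR}=\sum_{i+j=n}h^{i,j}$, and by the Hodge–Tate decomposition for the crystalline representation $H^n_{\et}(X_{\overline{\mathbb Q}},\QQ_p)$ (Kubrak–Prikhodko, \cite[Theorem 4.3.25]{KubrakPrikhodkopHodgeStacks}) the Hodge–Tate weight $-j$ of $H^n_{\et}(X_{\overline{\mathbb Q}},\QQ_p)$ has multiplicity $h^{n-j,j}$. So it suffices to show: for $n\leq 3$, the representation $V_n:=H^n_{\et}(X_{\overline{\mathbb Q}},\QQ_p)$ has all Hodge–Tate weights equal (in fact equal to $-n/2$ if $n$ even, and the odd $n$ must be purely of one weight as well once we know $V_n$ is forced to be ``small'').

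\textbf{Steps 2–3 (the Fontaine bound — this is the heart, and the main obstacle).} By Lemma \ref{lem: Galois rep is everywhere unramified and crystalline at p}, $V_n$ is unramified at every $\ell\neq p$ and crystalline at $p$, with Hodge–Tate weights in $[-n,0]\subseteq[-3,0]$ (the range on weights coming from the fact that the Hodge numbers $h^{i,j}$ with $i+j=n\leq 3$ only occur for $j\leq 3$). Fontaine's theorem (\cite{FontainePropreLisseZ}, as used by Abrashkin \cite{Abrashkin90}) says that a nonzero such representation with Hodge–Tate weights in a window of length $\leq p-2$ (certainly satisfied for $p$ large, and for the purposes of this statement I would just take $p$ large, say $p\geq 5$ or adapt to $p\geq 7$ as Fontaine does) cannot have a jump in its Hodge filtration unless it is large; precisely, the existence of a nontrivial unramified-everywhere crystalline representation with Hodge–Tate weights $\{0,\dots\}$ contradicts the Odlyzko discriminant bounds for the number field cut out by the $p$-torsion, because there are no nontrivial such extensions of $\QQ$ with small ramification. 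The cleanest route — and the one I would actually write — is to say that Fontaine's estimate applies verbatim once we know $V_n$ is crystalline everywhere and unramified everywhere, since that input does not care whether $X$ is a scheme or a DM stack: the conclusion is a statement purely about Galois representations of $\Gal(\overline{\QQ}/\QQ)$, namely that $V_n$ with Hodge–Tate weights in $[-3,0]$ and $n\leq 3$ has a \emph{single} Hodge–Tate weight (equivalently, $\bigwedge$-twists vanish in the relevant range; for $n\leq 3$ this forces $h^{i,j}=0$ unless $i=j$). \textbf{The main obstacle is making sure the Kubrak–Prikhodko $p$-adic Hodge theory genuinely supplies what Fontaine's argument needs for stacks} — specifically that $H^n_{\et}(X_{\overline{\QQ}},\ZZ_p)/\text{torsion}$ sits in an appropriate crystalline lattice and that the comparison with $H^n_{\dR}(X_{\ZZ_p})$ respects integral structures well enough to feed the discriminant estimate; if the integral refinement is not available one falls back, exactly as Abrashkin–Fontaine do in the original, on the rational statement plus the fact that $p$ was arbitrary, which already forces the Hodge–Tate multiplicities to be concentrated in one weight.

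\textbf{Step 4 (conclusion).} Combining: $h^{n-j,j}(X_{\mathbb C})$ is the multiplicity of weight $-j$ in $V_n$; Steps 2–3 give that this multiplicity is $0$ unless $n-j=j$, i.e.\ unless $i=j$; since $n\leq 3$ this covers exactly $i+j\leq 3$, $i\neq j$, and we conclude $H^i(X_{\mathbb C},\Omega^j_{X_{\mathbb C}})=0$ there. (For $n=3$ one also uses Poincaré duality / the functional equation, or the symmetry $h^{i,j}=h^{j,i}$ from Hodge theory over $\mathbb C$, to reduce $\{h^{3,0},h^{0,3}\}$ and $\{h^{2,1},h^{1,2}\}$ to a single pair each; and the Weil conjecture over finite fields, Proposition \ref{prop: Riemann Hypothesis Weil DM stacks}, enters if one wants to control the weights of the reductions $X_{\mathbb F_p}$, which is the form in which Fontaine's bound is often phrased.) I would keep the written proof short, structured as ``the proof is the same as \cite{FontainePropreLisseZ}, \cite{Abrashkin90}, once one substitutes: (i) the stacky comparison theorems and Hodge-to-de Rham degeneration of \cite{KubrakPrikhodkopHodgeStacks} for their scheme analogues, (ii) Lemma \ref{lem: Galois rep is everywhere unramified and crystalline at p} for the everywhere-good-reduction input, and (iii) Proposition \ref{prop: Riemann Hypothesis Weil DM stacks} for purity of the $\ell$-adic cohomology of the fibres'', and then spell out only the translation of Hodge–Tate weights into Hodge numbers, since that is the one place where the stacky bookkeeping could conceivably differ.
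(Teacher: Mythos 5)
Your proposal follows essentially the same route as the paper: Fontaine's classification of everywhere-unramified, crystalline-at-$p$ representations with Hodge--Tate weights in $[-3,0]$ (which is a purely Galois-theoretic statement about $\QQ_p$-representations, so no integral refinement is needed and your "main obstacle" dissolves), combined with the stacky de Rham comparison to translate into Hodge numbers and Proposition \ref{prop: Riemann Hypothesis Weil DM stacks} to pin down which Tate twists occur. One small correction of emphasis: Fontaine's bound alone only yields a filtration with graded pieces $\QQ_p(i)^{s_i}$, not a single Hodge--Tate weight, so the Weil-conjecture/purity input you mention parenthetically in Step 4 is not optional --- it is exactly what forces $s_i=0$ unless $2i=N$.
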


\begin{proof}
    The argument here just uses extensions of $p$-adic Hodge theory to stacks and then copy-pastes the argument of Abrashkin and Fontaine.
    More precisely, let $V:=H^{N}_{\et}(X_{\overline{\mathbb{Q}}},\mathbb{Q}_{p}).$
    Assume that there is a $\Qp[\Gal(\overline{\Q}/\Q)]$-filtration
    $$
    0=V_{N+1}\subset V_{N} \subset \cdots \subset V_{1} \subset V_{0}=V
    $$
    such that for all $0\leq i \leq N,$
    we have $V_{i}/V_{i+1}\simeq \mathbb{Q}_{p}(i)^{s_{i}}$ for some $s_{i}\geq 0$.
    Then, by using the de Rham comparison for DM stacks \cite{deRhamCompDMstacks} (this can also be deduced from \cite[Theorem 4.3.25]{KubrakPrikhodkopHodgeStacks}, c.f.\ \textit{loc.cit.}, beginning of page 9) and Proposition \ref{prop: Riemann Hypothesis Weil DM stacks},
    we can repeat the arguments in \cite[page 4]{FontainePropreLisseZ} and \cite[Proposition 5.3]{Abrashkin} to conclude that
    $$
    H^{i}(X_{\mathbb{C}},\Omega^{j}_{X_{\mathbb{C}}/\mathbb{C}})=0
    $$
    for $i+j\leq N,$ $i\neq j.$
    It suffices therefore to find such a filtration. For this, we can just cite Fontaine {\cite[Proposition 1]{FontainePropreLisseZ}}.
\end{proof}

\begin{remark}
    Theorem \ref{thm: FontaineAbrashkinDMStacks} implies that
    $$
        H^{1}(X_{\mathbb{C}},\mathbb{Q})=H^{3}(X_{\mathbb{C}},\mathbb{Q})=0
    $$
    and 
    $$
        c_{1}\colon \Pic(X_{\mathbb{C}})\otimes_{\ZZ}\mathbb{Q}\simeq H^{2}(X_{\mathbb{C}},\mathbb{Q}).
    $$
    When applied to $X=\overline{\mathcal{M}}_{g,n}$ this gives a $p$-adic Hodge theory approach to (some of) the classical vanishing results in \cite{Arbarello-CornalbaCohomology} (with the exception of the degree $5$ cohomology).
\end{remark}

\section{General results on Brauer groups of proper and smooth Deligne-Mumford stacks over the integers}\label{section: General results on Brauer groups}

\subsection{Finiteness of the Brauer group}

In this subsection, we shall prove that
the Brauer groups
$
\Br(X)
$
and
$
\Br(X_{\overline{\QQ}})
$
are finite for a proper and smooth DM stack $X$ over $\Spec(\ZZ)$; see Theorem \ref{Theorem:finiteness of Brauer group}.

For this, we will need to use the relative Picard functor.
Let $S$ be a noetherian scheme and
let
$f\colon X\rightarrow S$
be a proper DM stack over $S$.
We assume that $f$ is flat and $\mathcal{O}_{S}\simeq f_{*}\mathcal{O}_{X}$ holds universally.
We denote by $\underline{\Pic}_{X/S}$ the relative Picard functor, i.e.\ the fppf sheafification of the presheaf that sends a (quasi-separated) scheme $T$ over $S$ to
$$
    \Pic(X\times_{S}T)/\Image(\Pic(T)\rightarrow \Pic(X\times_{S}T)).
$$
Then this is representable by a commutative group algebraic space locally of finite type over $S$.
Moreover $\underline{\Pic}_{X/S}$ commutes with base change in $S$.
We note that by \cite[Theorem 2.2.5]{BrochardPicardFunctorStack} we could also work with the étale sheafification.
See \cite[Section 3]{fringuelli2023picardgroupschememoduli}, \cite{BrochardPicardFunctorStack}, \cite{BrochardPicardStackFiniteness} for background that is needed in our work.

\begin{remark}\label{Remark: geometricallt connected fibers imply cohomological flatness}
In fact, we will mostly consider morphisms $f\colon X\rightarrow \Spec(\ZZ)$ which will be proper and smooth.
    For such morphisms,
    $f_{*}\mathcal{O}_{X}$ is locally free on $\Spec(\ZZ)$ and its formation commutes with base change along any morphism $T \to \Spec(\ZZ)$.
    Indeed, it is well-known that the function sending a point $s \in \Spec(\ZZ)$ to the number of connected components of the geometric fiber of $X$ at $s$ is constant (c.f.\ \cite[Theorem 4.17]{DeligneMumfordModuliCurves}).
    Then, the assertion follows by the same argument as in the proof of \cite[Chapter II, Section 5, Corollary 2]{MumfordAbelian}: More precisely, we choose an \'etale surjection $U \to X$
    from an affine scheme $U$.
    Let
    \[
    0 \to C^{0} \to C^{1} \to C^{2} \to \cdots
    \]
    be the \v{C}ech complex associated to $\mathcal{O}_{X}$ and the \v{C}ech nerve of $U \to X$, so that all entries $C^{i}$ are torsion free $\ZZ$-modules.
    (We note that, since the diagonal
    $X \to X \times_{\Spec(\ZZ)} X$ is a finite morphism,
    the \v{C}ech nerve of $U \to X$ consists of affine schemes.
    However, this complex may not be bounded above in general.)
    Now let us look at the bounded complex $D^{\bullet}$ given by
\[
0 \to C^{0} \to C^{1} \to \Ker(C^{2}\rightarrow C^{3}) \to 0.
\]
Then, for any ring $R,$ we have that
$
H^{0}(X_{R},\mathcal{O}_{X_{R}})=H^{0}(D^{\bullet}\otimes_{\ZZ}R).
$
Since $\Ker(C^{2}\rightarrow C^{3})$ is torsion free, it is flat over $\ZZ$.
Since $H^{i}(X,\mathcal{O}_{X})$ are finitely generated $\ZZ$-modules,
we can use \cite[Chapter II, Section 5, Lemma 1]{MumfordAbelian}
to obtain a bounded complex of finite free $\ZZ$-modules $K^{\bullet}$ with a quasi-isomorphism
of complexes $K^{\bullet}\rightarrow D^{\bullet}.$
We can then follow the argument in \cite[Chapter II, Section 5, Corollary 2]{MumfordAbelian}, to deduce the desired claim.
\end{remark}

Let us record the following consequence of Remark \ref{Remark: geometricallt connected fibers imply cohomological flatness}.

\begin{lem}\label{Lemma:connectedness}
    For a proper and smooth DM stack $X \to \Spec(\ZZ)$, the following assertions are equivalent:
\begin{enumerate}
    \item The geometric fibers of $X$ are connected.
    \item We have $\mathcal{O}_{\Spec(\ZZ)}\simeq f_{*}\mathcal{O}_{X}$ universally.
    \item $X$ is connected.
\end{enumerate}
\end{lem}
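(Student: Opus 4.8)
The plan is to establish the cycle of implications $(1)\Rightarrow(2)\Rightarrow(3)\Rightarrow(1)$. The essential inputs are Remark~\ref{Remark: geometricallt connected fibers imply cohomological flatness} (so that $f_{*}\mathcal{O}_{X}$ is finite locally free over $\ZZ$ and its formation commutes with arbitrary base change), the vanishing $\Pic(\ZZ)=0$, and Minkowski's theorem that $\Spec(\ZZ)$ has no connected finite \'etale cover of degree $>1$. We may assume $X\neq\emptyset$, the empty case being trivial (all three conditions then fail).

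For $(1)\Rightarrow(2)$: set $\mathcal{A}:=f_{*}\mathcal{O}_{X}$, which by the Remark is finite locally free over $\ZZ$ with base-change compatible formation, so that $\mathcal{A}\otimes_{\ZZ}\overline{k(s)}=H^{0}(X_{\bar s},\mathcal{O}_{X_{\bar s}})$ at every geometric point $\bar s$ of $\Spec(\ZZ)$. Since $X_{\bar s}$ is proper, reduced (being smooth over a field) and connected, this last group is $\overline{k(s)}$; hence $\mathcal{A}$ is an invertible $\mathcal{O}_{\Spec(\ZZ)}$-module, and since $\Pic(\ZZ)=0$ the unit map $\mathcal{O}_{\Spec(\ZZ)}\to\mathcal{A}$---an isomorphism modulo every maximal ideal---is an isomorphism; it remains one after any base change by the Remark. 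For $(2)\Rightarrow(3)$: then $H^{0}(X,\mathcal{O}_{X})=\ZZ$ has no nontrivial idempotents, so $X$ is connected.

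For $(3)\Rightarrow(1)$, the crucial step: by the Remark $\mathcal{A}:=f_{*}\mathcal{O}_{X}$ is finite locally free over $\ZZ$ and $\mathcal{A}\otimes_{\ZZ}\overline{k(s)}=H^{0}(X_{\bar s},\mathcal{O}_{X_{\bar s}})$ is a reduced finite $\overline{k(s)}$-algebra, because $X_{\bar s}$ is smooth, hence reduced, over the algebraically closed field $\overline{k(s)}$. Therefore $\Spec(\mathcal{A})\to\Spec(\ZZ)$ is finite, flat and of finite presentation with reduced geometric fibres, i.e.\ finite \'etale, and by Minkowski it is a finite disjoint union of copies of $\Spec(\ZZ)$. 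Since $X$ is connected, $H^{0}(X,\mathcal{O}_{X})=\mathcal{A}$ has no nontrivial idempotents, so there is exactly one copy; thus $\mathcal{A}=\mathcal{O}_{\Spec(\ZZ)}$, whence $H^{0}(X_{\bar s},\mathcal{O}_{X_{\bar s}})=\overline{k(s)}$ and each geometric fibre $X_{\bar s}$ is connected.

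The step demanding the most care is $(3)\Rightarrow(1)$ in the stacky setting, specifically the two inputs: that $H^{0}(X_{\bar s},\mathcal{O}_{X_{\bar s}})$ is reduced, and that connectedness of $X$ forces $H^{0}(X,\mathcal{O}_{X})$ to have no nontrivial idempotents. Both can be verified by passing to an \'etale atlas $U\to X$ (resp.\ $U_{\bar s}\to X_{\bar s}$) by a reduced scheme: $H^{0}$ of the structure sheaf is the equalizer of the two restrictions $H^{0}(U,\mathcal{O}_{U})\rightrightarrows H^{0}(U\times_{X}U,\mathcal{O})$, hence a subring of the reduced ring $H^{0}(U,\mathcal{O}_{U})$, and idempotents of $H^{0}(X,\mathcal{O}_{X})$ correspond as usual to clopen substacks of $X$. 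Alternatively, one may replace this argument by the classical Stein factorization of $f$ together with the fact, recorded in Remark~\ref{Remark: geometricallt connected fibers imply cohomological flatness}, that the number of connected components of the geometric fibres of $X$ is independent of the point.
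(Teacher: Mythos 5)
Your proof is correct and follows essentially the same route as the paper: both rest on the base-change/local-freeness statement of Remark~\ref{Remark: geometricallt connected fibers imply cohomological flatness}, the observation that $H^{0}(X,\mathcal{O}_{X})$ is finite \'etale over $\ZZ$ (reducedness of the geometric fibres coming from smoothness), and Minkowski's theorem to conclude $H^{0}(X,\mathcal{O}_{X})=\ZZ$ when $X$ is connected. The only difference is the orientation of the cycle of implications, which is immaterial.
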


\begin{proof}
    $(1) \Leftrightarrow (2)$ follows immediately from Remark \ref{Remark: geometricallt connected fibers imply cohomological flatness}.
    $(1) \Rightarrow (3)$ is easy.
    It remains to show that (3) implies (2).
    By Remark \ref{Remark: geometricallt connected fibers imply cohomological flatness}, it suffices to show that $\ZZ=H^{0}(X,\mathcal{O}_{X})$.
    It is easy to see that
    $H^{0}(X,\mathcal{O}_{X})$
    is finite and flat over $\ZZ$.
    Then, Remark \ref{Remark: geometricallt connected fibers imply cohomological flatness} implies that $H^{0}(X,\mathcal{O}_{X})$ is finite \'etale over $\ZZ$.
    Therefore, it follows from Minkowski's theorem that $\ZZ=H^{0}(X,\mathcal{O}_{X})$ when $X$ is connected.
\end{proof}

Let us now explain the following result:

\begin{thm}\label{Theorem:finiteness of Brauer group}
    Let $f\colon X\rightarrow \Spec(\ZZ)$ be a proper and smooth
    DM stack.
    Then
    $
    \Br(X)
    $
    and
    $
    \Br(X_{\overline{\QQ}})
    $
    are finite.
\end{thm}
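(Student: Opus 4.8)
We reduce at once to the case that $X$ is connected: $X$ is noetherian, hence has finitely many connected components, each proper and smooth over $\Spec(\ZZ)$, and $\Br$ of a finite disjoint union is the direct sum of the $\Br$'s; moreover by Lemma~\ref{Lemma:connectedness} a connected such $X$ has geometrically connected fibres, so $X_{\overline{\QQ}}$ is connected and $\mathcal{O}_{\Spec(\ZZ)}\iso f_*\mathcal{O}_X$ holds universally. \emph{Finiteness of $\Br(X_{\overline{\QQ}})$} is the easy half: Lemma~\ref{lem: short exact sequence with H3} presents $\Br(X_{\overline{\QQ}})$ as an extension of $H^3(X_{\CC}^{\an},\ZZ)_{\tors}$ by $(\QQ/\ZZ)^{b_2-\rho}$; the former is finite because $X_\CC$ is proper, so $X_\CC^{\an}$ is compact with finitely generated singular
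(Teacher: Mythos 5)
Your proposal as submitted has a genuine gap in the part you call ``the easy half,'' and it omits the other half of the theorem entirely. Lemma~\ref{lem: short exact sequence with H3} presents $\Br(X_{\overline{\QQ}})$ as an extension of $H^{3}(X_{\mathbb{C}}^{\an},\ZZ)_{\tors}$ by $(\QQ/\ZZ)^{b_{2}-\rho}$. You justify finiteness of the quotient (correctly: the analytification is compact, so its integral cohomology is finitely generated), but the kernel $(\QQ/\ZZ)^{b_{2}-\rho}$ is \emph{infinite} whenever $b_{2}>\rho$ --- think of a K3 surface over $\overline{\QQ}$, whose Brauer group is indeed infinite. So the whole content of this half of the theorem is the equality $b_{2}(X_{\overline{\QQ}})=\rho(X_{\overline{\QQ}})$, and this is exactly where the hypothesis that $X$ is proper and smooth over $\Spec(\ZZ)$ (and not merely over a field) must enter: the paper deduces it from the Fontaine--Abrashkin-type vanishing $H^{2}(X_{\mathbb{C}},\mathcal{O}_{X_{\mathbb{C}}})=0$ of Theorem~\ref{thm: FontaineAbrashkinDMStacks}, which forces $c_{1}\colon\Pic(X_{\mathbb{C}})\otimes_{\ZZ}\QQ\to H^{2}(X_{\mathbb{C}},\QQ)$ to be an isomorphism. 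Without some such input your argument proves nothing about the divisible part.

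Second, the statement also asserts finiteness of $\Br(X)$ itself, and your proposal (even granting that it was truncated) contains no argument for this. This is the harder half: knowing $\Br(X_{\overline{\ZZ}})$ is finite, one must still control the kernel of $\Br(X)\to\Br(X_{\overline{\ZZ}})$. The paper does this by spreading out over a dense open $U\subset\Spec(\ZZ)$ where $H^{i}(X_{s},\mathcal{O}_{X_{s}})=0$ for $i=1,2,3$ (again from Theorem~\ref{thm: FontaineAbrashkinDMStacks}), so that $\underline{\Pic}_{X_{U}/U}$ is \'etale with $\underline{\Pic}^{\tau}_{X_{U}/U}$ finite \'etale; finiteness of $H^{1}_{\mathrm{fl}}(U,\underline{\Pic}_{X_{U}/U})$ (via Milne's arithmetic duality) combined with the Leray spectral sequence for $\mathbb{G}_{m}$ then bounds the kernel up to classes coming from $\Br(U)$, which are handled using an $\mathcal{O}_{K}$-point and finiteness of $\Br(\mathcal{O}_{K})$. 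None of this is routine, and nothing in your proposal substitutes for it.
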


\begin{proof}
Since we do not need this result in the following, we will only sketch the proof.
First observe that
$b_{2}(X_{\overline{\QQ}})=\rho(X_{\overline{\QQ}})$ by Theorem \ref{thm: FontaineAbrashkinDMStacks}.
Thus Lemma \ref{lem: short exact sequence with H3} implies that
$
\Br(X_{\overline{\QQ}})
$
is finite.
For the finiteness of
$
\Br(X)
$,
we may assume that $X$ is connected.
Then, by Lemma \ref{Lemma:connectedness}, we have $\mathcal{O}_{\Spec(\ZZ)}\simeq f_{*}\mathcal{O}_{X}$ universally, so that we can use the relative Picard space $\underline{\Pic}_{X/\ZZ}$ in the following.

Since $\Br(X_{\overline{\Q}})$ is finite,
it follows that $\Br(X_{\overline{\ZZ}})$ is also finite.
    We therefore have to see that
    $$
    K_{\ZZ}=\Ker(\Br(X)\rightarrow \Br(X_{\overline{\ZZ}}))
    $$
    is finite. 
    It follows from Theorem \ref{thm: FontaineAbrashkinDMStacks} that
    there exists a dense open subset $U$ of $\Spec(\ZZ)$ such that for all
    $s\in U,$
    $$
    H^{i}(X_{s},\mathcal{O}_{X_{s}})=0
    $$
    for $i=1,2,3$ (where $X_s$ is the fiber of $X$ at $s$).
    By \cite[Proposition 3.2, Theorem 3.6]{fringuelli2023picardgroupschememoduli},
    the relative Picard space
    $\underline{\Pic}_{X_{U}/U}$
    is \'etale over $U$.
    We have an exact sequence of abelian sheaves
    for the fppf topology on $U$:
    \[
    0 \to \underline{\Pic}^{\tau}_{X_{U}/U} \to \underline{\Pic}_{X_{U}/U} \to \text{Num}_{X_{U}/U} \to 0.
    \]
    By (\cite[Theorem 3.6 (iv) (b)]{fringuelli2023picardgroupschememoduli}) $\underline{\Pic}^{\tau}_{X_{U}/U}$ is proper over $U$, and since it is open
    in $\underline{\Pic}_{X_{U}/U}$ \cite[Theorem 3.6 (i)]{fringuelli2023picardgroupschememoduli}, it is finite étale.
    It follows from \cite[Theorem 3.1]{MilneDuality} that
    $H^{i}_{\text{fl}}(U,\underline{\Pic}^{\tau}_{X_{U}/U})$ is finite.
    Furthermore, $\text{Num}_{X_{U}/U}$ is a locally constant étale sheaf of finite free $\ZZ$-modules (c.f.\ the proof of Lemma \ref{Lemma:constantness of Picard scheme}).
    It is then not too hard to see that $H^{1}_{\text{fl}}(U,\text{Num}_{X_{U}/U})$ is finite.
    We deduce then that also $H^{1}_{\text{fl}}(U,\underline{\Pic}_{X_{U}/U})$ is finite.
    Now we look at the 7 term exact sequence (see e.g.\ \cite[Appendix B, page 309]{MilneEtCoho}) for the Leray spectral sequence in the fppf topology for $f_{U}\colon X_{U}\rightarrow U$ and the sheaf $\mathbb{G}_{m}$ and see that
    $$
    \Ker(\Br(X_{U})\rightarrow \Br(X_{\overline{U}}))/\Image(\Br(U))
    $$
    is finite. Here $\overline{U}=U\times_{\Spec(\ZZ)}\Spec(\overline{\ZZ}).$ It then suffices to see that $K_{\ZZ}\cap \Image(\Br(U))$ (intersection in $\Br(X_{U})$) is finite. This is not too hard to see using that $X$ acquires a $\mathcal{O}_{K}$-valued point, where $K/\mathbb{Q}$ is some finite Galois extension, and that $\Br(\mathcal{O}_{K})$ is finite (\cite[Proposition 2.4]{GrothendieckGroupedeBrauerIII}).
\end{proof}

\subsection{Deducing vanishing over the integers}\label{Subsection:Deducing vanishing over the integers}

The main result of this section is the following:
\begin{prop}\label{Proposition: deducing vanishing over Z}
    Let $f\colon X\rightarrow \Spec(\ZZ)$ be a proper and smooth DM stack with geometrically connected fibers.
    Assume that either of the following conditions holds:
    \begin{enumerate}
        \item $X(\ZZ)\neq \emptyset$ and $X_{\overline{\Q}}$ is simply connected.
        \item $\underline{\Pic}_{X/\ZZ}$ is a constant sheaf associated with a finitely generated $\ZZ$-module.
    \end{enumerate}
    Then the natural homomorphism
    \[
    \Br(X) \to \Br(X_{\overline{\Q}})
    \]
    is injective.
\end{prop}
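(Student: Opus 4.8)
The plan is to analyze the Leray spectral sequence for $f_{\Q}\colon X_{\Q}\rightarrow \Spec(\Q)$ with coefficients in $\mathbb{G}_m$, together with base change to $\overline{\Q}$, and to show that the kernel of $\Br(X_\Q)\to\Br(X_{\overline{\Q}})$ consists of classes pulled back from $\Br(\Q)$ that become unramified everywhere; such classes vanish by class field theory. The low-degree exact sequence gives
\[
0\to H^1(\Gal(\overline{\Q}/\Q),\Pic(X_{\overline{\Q}}))\to \Ker\bigl(\Br(X_\Q)\to\Br(X_{\overline{\Q}})\bigr)/\Image(\Br(\Q))\to H^2(\Gal(\overline{\Q}/\Q),\Pic(X_{\overline{\Q}})),
\]
using $H^0(X_{\overline{\Q}},\mathbb{G}_m)=\overline{\Q}^{\times}$ (from geometric connectedness and properness) and $\Br(\overline{\Q})=0$. (I will phrase the spectral sequence argument via the $7$-term exact sequence as in \cite[Appendix B]{MilneEtCoho}, exactly as in the proof of Theorem \ref{Theorem:finiteness of Brauer group}.) The first step is therefore to show that, under either hypothesis, $H^1(\Gal(\overline{\Q}/\Q),\Pic(X_{\overline{\Q}}))=0$, so that every class in the kernel comes from the base.

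For this I would treat the two cases. Under hypothesis (1): since $X_{\overline{\Q}}$ is simply connected, $\underline{\Pic}^0_{X_{\overline{\Q}}/\overline{\Q}}$ is trivial (an abelian variety receiving a surjection related to $\pi_1$ must be zero; alternatively $H^1(X_{\overline{\Q}},\OO)=0$ by Theorem \ref{thm: FontaineAbrashkinDMStacks} forces the Picard scheme to be \'etale, hence $\Pic(X_{\overline{\Q}})=\NS(X_{\overline{\Q}})$ is a finitely generated abelian group), and then by Lemma \ref{lem: Galois rep is everywhere unramified and crystalline at p} (applied after tensoring, to see the $\Gal$-action factors through a quotient unramified at every prime, using smooth proper reduction everywhere over $\ZZ$) together with Minkowski's theorem on the absence of nontrivial everywhere-unramified extensions of $\Q$, the Galois action on $\Pic(X_{\overline{\Q}})$ is trivial; for a trivial module which is finitely generated this still need not kill $H^1$, but a section $X(\ZZ)\neq\emptyset$ and the everywhere-unramifiedness actually force the extension class to be unramified at all finite places, hence split — I will instead argue directly that the restriction-corestriction / section argument shows $H^1$ is killed, the cleanest route being: triviality of the action reduces $H^1$ to $\Hom(\Gal^{\ab},\Pic(X_{\overline{\Q}}))$ which is torsion-free-valued continuous homomorphisms, and unramifiedness at every place forces these to vanish by global class field theory. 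Under hypothesis (2): if $\underline{\Pic}_{X/\ZZ}$ is the constant sheaf on a finitely generated $\ZZ$-module $M$, then $\Pic(X_{\overline{\Q}})=M$ with trivial Galois action, and the same class field theory input gives $H^1(\Gal(\overline{\Q}/\Q),M)=0$ directly (no section needed).

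Once $H^1$ vanishes, the kernel of $\Br(X_\Q)\to\Br(X_{\overline{\Q}})$ equals the image of $\Br(\Q)$ (modulo noting that the $H^2$ term only receives a class that dies — or rather, one shows the kernel class, being pulled back from $\Br(\Q)$, is everywhere unramified). To finish I would upgrade from $\Q$ to $\ZZ$: the class in $\Br(\Q)$ pulled back to $\Br(X_\Q)$ must, because $f$ is smooth and proper over all of $\Spec(\ZZ)$ and $H^2_{\et}(X,\mathbb{G}_m)$ maps compatibly, have zero local invariant at every finite place (using proper smooth base change / the fact that $\Br(X_{\ZZ_{(p)}})\to\Br(X_\Q)$ controls ramification, together with a section or with $\Br(\OO_K)$ being finite and unramified); an everywhere-unramified class in $\Br(\Q)$ also has trivial archimedean invariant by the reciprocity law, hence is $0$. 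Therefore $\Br(X)\to\Br(X_{\overline{\Q}})$ is injective. I expect the \emph{main obstacle} to be the careful bookkeeping in showing the kernel class is everywhere unramified over $\Q$ — i.e.\ relating the Brauer group of the generic fiber to that of $X$ itself over $\Spec(\ZZ)$ at each prime $p$ — which is exactly where properness and smoothness over $\ZZ$ (not merely over $\Q$) is used, and where one may need the existence of an $\OO_K$-point for a finite extension $K/\Q$ and the finiteness of $\Br(\OO_K)$ as in the proof of Theorem \ref{Theorem:finiteness of Brauer group}.
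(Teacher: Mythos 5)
Your treatment of case (1) follows the paper's route: kill $H^{1}(\Gal(\overline{\Q}/\Q),\Pic(X_{\overline{\Q}}))$ using $\Pic(X_{\overline{\Q}})=\NS(X_{\overline{\Q}})$, everywhere-unramifiedness, and Minkowski, conclude that the kernel of $\Br(X_{\Q})\to\Br(X_{\overline{\Q}})$ is $\Image(\Br(\Q))$, and then dispose of the $\Br(\Q)$ contribution. Three remarks. First, your displayed low-degree sequence has the terms in the wrong order: the correct statement is that $\Ker(\Br(X_\Q)\to\Br(X_{\overline{\Q}}))/\Image(\Br(\Q))$ injects \emph{into} $H^{1}(\Gal(\overline{\Q}/\Q),\Pic(X_{\overline{\Q}}))$, not the reverse; as written your sequence would not yield your own conclusion, though the correct sequence does. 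Second, the key fact that $\Pic(X_{\overline{\Q}})$ is \emph{torsion free} (without which $\Hom_{\mathrm{cts}}(\Gal(\overline{\Q}/\Q),\Pic(X_{\overline{\Q}}))$ need not vanish) is asserted but never justified; this is precisely where simple connectedness enters, via $\Pic(X_{\overline{\Q}})[n]\hookrightarrow H^{1}_{\et}(X_{\overline{\Q}},\mu_n)=0$. Once torsion-freeness is in hand, no class field theory is needed: a continuous homomorphism from a profinite group to a discrete torsion-free finitely generated group is trivial. Third, your endgame via local invariants at every place is an unnecessary detour: with a section $s\colon\Spec(\ZZ)\to X$ one writes $\beta=s_{\Q}^{*}(\alpha|_{X_{\Q}})$, which is the restriction of $s^{*}(\alpha)\in\Br(\ZZ)=0$, and concludes immediately.

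Case (2) has a genuine gap. You claim $H^{1}(\Gal(\overline{\Q}/\Q),M)=0$ for the constant module $M$, but hypothesis (2) allows $M$ to have torsion, and for, say, $M=\ZZ/2\ZZ$ with trivial action one has $H^{1}(\Gal(\overline{\Q}/\Q),\ZZ/2\ZZ)\simeq\Q^{\times}/(\Q^{\times})^{2}\neq 0$ — you yourself noted two sentences earlier that a trivial finitely generated module need not have vanishing $H^{1}$. There is no class field theory input that saves this over $\Spec(\Q)$: the relevant cocycles are arbitrary $M$-torsors over $\Q$, ramified ones included. Moreover, even if the kernel were $\Image(\Br(\Q))$, hypothesis (2) provides no section, and an $\mathcal{O}_K$-point for a finite extension $K/\Q$ only shows $\beta|_K=0$, which bounds the order of $\beta$ without killing it. The correct move — and what the paper does — is to run the Leray spectral sequence for $f$ over $\Spec(\ZZ)$ itself (in the fppf topology), where the constancy hypothesis gives $H^{1}_{\fl}(\Spec(\ZZ),\underline{\Pic}_{X/\ZZ})=H^{1}_{\et}(\Spec(\ZZ),M)=0$ because $\pi_1^{\et}(\Spec(\ZZ))$ is trivial by Minkowski; combined with $\Br(\ZZ)=0$ this yields injectivity of $\Br(X)\to\Br(X_{\overline{\ZZ}})$ directly, and one concludes by composing with the injection $\Br(X_{\overline{\ZZ}})\hookrightarrow\Br(X_{\overline{\Q}})$. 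In short: the constancy of $\underline{\Pic}_{X/\ZZ}$ must be exploited over $\Spec(\ZZ)$, not over $\Spec(\Q)$.
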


Before we start the proof of Proposition \ref{Proposition: deducing vanishing over Z}, we need two lemmas.
Let us consider $f\colon X\rightarrow \Spec(\ZZ),$ a proper and smooth DM stack with geometrically connected fibers.

\begin{lem}\label{lem Vanishing Galois coho Pic}
Assume that $X_{\Qbar}$ is simply connected.
Then we have
$$
H^{1}_{\et}(\Spec(\Q),\underline{\Pic}_{X_{\Q}/\Q})=0.
$$
\end{lem}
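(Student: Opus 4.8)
The plan is to reduce the statement to a Galois cohomology computation and then to show that the Galois module in question has trivial action because the corresponding number field is unramified everywhere, so that Minkowski's theorem applies.

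\smallskip

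First I would use simple connectedness to pin down $\underline{\Pic}_{X_{\Q}/\Q}$. Since $X_{\overline{\Q}}$ is simply connected we have $H^{1}_{\et}(X_{\overline{\Q}},\mu_{n})=0$ for all $n$ (using $\mu_{n}\simeq\ZZ/n\ZZ$ over $\overline{\Q}$), and as $X_{\overline{\Q}}$ is proper and geometrically connected the Kummer sequence gives $\Pic(X_{\overline{\Q}})[n]=0$; hence $\Pic(X_{\overline{\Q}})$ is torsion free. As recalled in the proof of Lemma~\ref{lem: short exact sequence with H3}, $\underline{\Pic}^{\circ}_{X_{\Q}/\Q}$ is a connected proper group scheme over $\Q$, reduced since $\cha(\Q)=0$, so $\underline{\Pic}^{\circ}_{X_{\overline{\Q}}/\overline{\Q}}$ would be an abelian variety; but its $\overline{\Q}$-points inject into the torsion-free group $\Pic(X_{\overline{\Q}})$, which forces $\underline{\Pic}^{\circ}_{X_{\overline{\Q}}/\overline{\Q}}=0$, hence $\underline{\Pic}^{\circ}_{X_{\Q}/\Q}=0$. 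Thus $\underline{\Pic}_{X_{\Q}/\Q}$ is \'etale over $\Spec(\Q)$, so it corresponds to the discrete $\Gal(\overline{\Q}/\Q)$-module $M:=\underline{\Pic}_{X_{\Q}/\Q}(\overline{\Q})$, and $H^{1}_{\et}(\Spec(\Q),\underline{\Pic}_{X_{\Q}/\Q})=H^{1}(\Gal(\overline{\Q}/\Q),M)$. Over the separably closed field $\overline{\Q}$ one has $M=\Pic(X_{\overline{\Q}})$ (because $\Pic(\overline{\Q})=\Br(\overline{\Q})=0$), which is therefore finitely generated and torsion free, i.e. $M\simeq\ZZ^{r}$ for some $r\geq 0$; and since $M$ is a discrete Galois module, the action factors through $\Gal(K/\Q)$ for some finite Galois extension $K/\Q$ (stabilize a finite generating set).

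\smallskip

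Next I would show $K/\Q$ is unramified at every finite prime $\ell$. Fix $\ell$ and pick a prime $p\neq\ell$. The cycle class map furnishes a $\Gal(\overline{\Q}/\Q)$-equivariant injection $M\otimes_{\ZZ}\ZZ_{p}=\NS(X_{\overline{\Q}})\otimes_{\ZZ}\ZZ_{p}\hookrightarrow H^{2}_{\et}(X_{\overline{\Q}},\ZZ_{p}(1))$ (as in the proof of Lemma~\ref{lem: short exact sequence with H3}), and by Lemma~\ref{lem: Galois rep is everywhere unramified and crystalline at p} the representation $H^{2}_{\et}(X_{\overline{\Q}},\Q_{p})$, hence also its Tate twist $H^{2}_{\et}(X_{\overline{\Q}},\Q_{p}(1))$, is unramified at $\ell$. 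So an inertia subgroup $I_{\ell}\subset\Gal(\overline{\Q}/\Q)$ acts trivially on $M\otimes_{\ZZ}\ZZ_{p}$, and since $M$ is torsion free it embeds in $M\otimes_{\ZZ}\ZZ_{p}$, whence $I_{\ell}$ acts trivially on $M$; thus $\ell$ is unramified in $K$. As $\ell$ was arbitrary, $K/\Q$ is unramified everywhere, so $K=\Q$ by Minkowski's theorem, and the Galois action on $M\simeq\ZZ^{r}$ is trivial. Finally $H^{1}(\Gal(\overline{\Q}/\Q),\ZZ^{r})=\Hom_{\cont}(\Gal(\overline{\Q}/\Q),\ZZ^{r})=0$ since $\Gal(\overline{\Q}/\Q)$ is profinite while $\ZZ^{r}$ is discrete and torsion free, which completes the proof.

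\smallskip

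The step I expect to require the most care is the first paragraph: verifying that $\underline{\Pic}_{X_{\Q}/\Q}$ is genuinely \'etale over $\Q$ (so that $H^{1}_{\et}$ is literally the Galois cohomology of $\Pic(X_{\overline{\Q}})$), and that the cycle class embedding invoked in the second paragraph is Galois-equivariant with torsion-free source; once these structural points are in place, the unramifiedness argument and the appeal to Minkowski's theorem are formal given Lemma~\ref{lem: Galois rep is everywhere unramified and crystalline at p}.
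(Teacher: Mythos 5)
Your proposal is correct and follows essentially the same route as the paper's proof: torsion-freeness of $\Pic(X_{\overline{\Q}})$ from simple connectedness via the Kummer sequence, unramifiedness at every $\ell$ via the Galois-equivariant embedding into $H^{2}_{\et}(X_{\overline{\Q}},\ZZ_{p}(1))$ together with Lemma \ref{lem: Galois rep is everywhere unramified and crystalline at p}, Minkowski's theorem to trivialize the action, and the vanishing of continuous homomorphisms from a profinite group into a discrete torsion-free group. The only (harmless) divergences are that you kill $\underline{\Pic}^{\circ}_{X_{\Q}/\Q}$ by observing that an abelian variety whose $\overline{\Q}$-points inject into a torsion-free group is trivial, whereas the paper deduces this from $H^{1}(X_{\Q},\mathcal{O}_{X_{\Q}})=0$ via Theorem \ref{thm: FontaineAbrashkinDMStacks}, and that the finite generation of $M=\NS(X_{\overline{\Q}})$ (needed so that the action factors through a finite quotient before applying Minkowski) should be justified by the theorem of the base for DM stacks, i.e.\ \cite[Theorem 3.4.1]{BrochardPicardStackFiniteness}, which the paper cites explicitly.
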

\begin{proof}
Since $H^{1}(X_{\Q},\mathcal{O}_{X_{\Q}})=0$ by Theorem \ref{thm: FontaineAbrashkinDMStacks}, we have that $\underline{\Pic}^{0}_{X_{\Q}/\Q}(\Qbar)=0$ (use \cite[Théorème 4.1.3]{BrochardPicardFunctorStack}), so that 
$
\Pic(X_{\Qbar})=\NS(X_{\Qbar}).
$
Thus $\Pic(X_{\Qbar})$ is a finitely generated abelian group by \cite[Theorem 3.4.1]{BrochardPicardStackFiniteness}.
Since $X_{\Qbar}$ is simply connected, we see by the Kummer exact sequence that $\Pic(X_{\Qbar})$ is torsion free.
Now observe that $\Pic(X_{\Qbar})$ is an everywhere unramified representation of $\Gal(\Qbar/\Q).$
Namely, let $\ell$ be a prime and we want to show that $\Pic(X_{\overline{\Q}})$ is unramified at $\ell.$
Let $p\neq \ell$ be an auxiliary prime.
By the Kummer exact sequence, we have a $\Gal(\overline{\Q}/\Q)$-equivariant embedding
$$
\Pic(X_{\overline{\Q}})\otimes_{\ZZ}\Qp \hookrightarrow H^{2}_{\et}(X_{\overline{\Q}},\Qp(1)).
$$
Since we know that $\Pic(X_{\overline{\Q}})$ is torsion free,
Lemma \ref{lem: Galois rep is everywhere unramified and crystalline at p} implies that it is unramified at $\ell$, as desired.

The action of $\Gal(\overline{\Q}/\Q)$ on $\Pic(X_{\overline{\Q}})$ is then trivial by Minkowski's theorem.
In total, it follows that
$$
    H^{1}_{\Gal}(\Gal(\Qbar/\Q),\Pic(X_{\Qbar}))=\Hom_{\text{cts}}(\Gal(\Qbar/\Q),\Pic(X_{\Qbar}))=0,
$$
since $\Pic(X_{\Qbar})$ is torsion free.
\end{proof}
\begin{lem}\label{Lemma description Kernel}
Assume that $X_{\Qbar}$ is simply connected.
Then
\[
\Ker(\Br(X_{\Q})\rightarrow \Br(X_{\Qbar}))=\Image(\Br(\Q)\rightarrow \Br(X_{\Q})).
\]
\end{lem}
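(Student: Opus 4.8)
The plan is to read the statement off the low-degree terms of the Leray spectral sequence
\[
E_2^{p,q}=H^p_{\et}(\Spec(\Q),R^qf_{\Q,*}\mathbb{G}_m)\Longrightarrow H^{p+q}_{\et}(X_\Q,\mathbb{G}_m)
\]
attached to $f_\Q\colon X_\Q\to\Spec(\Q)$. First I would identify the terms that matter. As $f_\Q$ is proper and smooth with geometrically connected fibers (which are geometrically reduced by smoothness), one has $H^0(X_T,\mathcal{O}_{X_T})=H^0(T,\mathcal{O}_T)$ for every $\Q$-scheme $T$, so $f_{\Q,*}\mathbb{G}_m=\mathbb{G}_{m,\Q}$ and hence $E_2^{p,0}=H^p_{\et}(\Spec(\Q),\mathbb{G}_m)$; in particular $E_2^{2,0}=\Br(\Q)$, and the composite $E_2^{2,0}\to H^2_{\et}(X_\Q,\mathbb{G}_m)$ is the pullback $f_\Q^{*}$. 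Next, $R^1f_{\Q,*}\mathbb{G}_m$ is the \'etale sheafification of $T\mapsto\Pic(X_T)$; since $T\mapsto\Pic(T)$ has trivial \'etale sheafification, this coincides with the \'etale sheafification of $T\mapsto\Pic(X_T)/\Pic(T)$, i.e.\ with $\underline{\Pic}_{X_\Q/\Q}$ (here one uses that the \'etale and fppf sheafifications of the relative Picard presheaf agree, \cite[Theorem 2.2.5]{BrochardPicardFunctorStack}). Consequently $E_2^{1,1}=H^1_{\et}(\Spec(\Q),\underline{\Pic}_{X_\Q/\Q})=0$ by Lemma \ref{lem Vanishing Galois coho Pic}. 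Finally, writing $\bar s$ for a geometric point of $\Spec(\Q)$, a continuity argument identifies the stalk $(R^2f_{\Q,*}\mathbb{G}_m)_{\bar s}=\varinjlim_{L}H^2_{\et}(X_L,\mathbb{G}_m)$ with $H^2_{\et}(X_{\Qbar},\mathbb{G}_m)$, the colimit running over finite subextensions $L/\Q$ of $\Qbar/\Q$ and using $X_{\Qbar}=\varprojlim_L X_L$; thus $E_2^{0,2}=H^2_{\et}(X_{\Qbar},\mathbb{G}_m)^{\Gal(\Qbar/\Q)}$, and the edge morphism $H^2_{\et}(X_\Q,\mathbb{G}_m)\to E_2^{0,2}$, composed with the inclusion of Galois invariants into $H^2_{\et}(X_{\Qbar},\mathbb{G}_m)$, is the pullback along $X_{\Qbar}\to X_\Q$.

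With these identifications in hand, I would examine the three-step filtration $0=F^3\subset F^2\subset F^1\subset F^0=H^2_{\et}(X_\Q,\mathbb{G}_m)$ with successive quotients $E_\infty^{0,2}$, $E_\infty^{1,1}$, $E_\infty^{2,0}$. Since $E_2^{1,1}=0$, we get $F^1=F^2$. The edge map $H^2_{\et}(X_\Q,\mathbb{G}_m)\to E_2^{0,2}$ has kernel $F^1$, and by the previous paragraph this kernel equals $\Ker\bigl(H^2_{\et}(X_\Q,\mathbb{G}_m)\to H^2_{\et}(X_{\Qbar},\mathbb{G}_m)\bigr)$, because the inclusion of Galois invariants is injective. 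On the other hand $F^2=E_\infty^{2,0}$ is the image of the edge map $E_2^{2,0}\to H^2_{\et}(X_\Q,\mathbb{G}_m)$, which we noted is $f_\Q^{*}$, so $F^2=\Image\bigl(\Br(\Q)\to H^2_{\et}(X_\Q,\mathbb{G}_m)\bigr)$. Combining $F^1=F^2$ and then using $\Br(-)=H^2_{\et}(-,\mathbb{G}_m)$ for the field $\Q$ and for the regular noetherian DM stacks $X_\Q$ and $X_{\Qbar}$ (\cite[Proposition 2.5]{AntieauMeierBrauerElliptic}) yields precisely $\Ker(\Br(X_\Q)\to\Br(X_{\Qbar}))=\Image(\Br(\Q)\to\Br(X_\Q))$.

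The only genuinely nontrivial input is Lemma \ref{lem Vanishing Galois coho Pic}, which is already available; everything else is bookkeeping with the spectral sequence. The one point demanding a little care is transporting the two classical facts $R^1f_*\mathbb{G}_m\cong\underline{\Pic}_{X/S}$ and $(R^2f_*\mathbb{G}_m)_{\bar s}\cong H^2_{\et}(X_{\bar s},\mathbb{G}_m)$ to the DM-stack setting, which is handled above by the \'etale-sheafification comparison and by the continuity of \'etale cohomology along the cofiltered limit $X_{\Qbar}=\varprojlim_L X_L$, respectively.
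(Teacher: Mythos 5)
Your proof is correct and follows essentially the same route as the paper: the Leray spectral sequence for $f_{\Q}$ and $\mathbb{G}_m$, the identification $R^1f_{\Q,*}\mathbb{G}_m\cong\underline{\Pic}_{X_\Q/\Q}$, the vanishing of $E_2^{1,1}$ via Lemma \ref{lem Vanishing Galois coho Pic}, and the identification of the kernel of the edge map to $E_2^{0,2}$ with $\Ker(\Br(X_\Q)\to\Br(X_{\Qbar}))$ via the stalk/continuity argument (the paper phrases this last step through the separatedness of the sheaf $R^2f_*\mathbb{G}_m$ and the seven-term exact sequence, but the content is the same). No gaps.
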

\begin{proof}
Since $R^{1}f_{*}\mathbb{G}_{m}=\underline{\Pic}_{X/\ZZ}$
by \cite[Proposition 2.1.3]{BrochardPicardFunctorStack},
we have
$$
H^{1}_{\et}(\Spec(\Q),R^{1}f_{*}\mathbb{G}_{m})=H^{1}_{\et}(\Spec(\Q),\underline{\Pic}_{X_{\Q}/\Q})=0
$$
by Lemma \ref{lem Vanishing Galois coho Pic}.
By the 7 term exact sequence for the Leray spectral sequence in étale cohomology for $f_{\Q}\colon X_{\Q}\rightarrow \Spec(\Q)$ and $\mathbb{G}_{m}$ (see \cite[Théorème A.2.8.]{BrochardPicardFunctorStack}),
we then have a surjection
\[
\Br(\Q) \to \Ker(\Br(X_{\Q})\rightarrow H^{0}_{\et}(\Spec(\Q),R^{2}f_{*}\mathbb{G}_{m})).
\]
(Here we also use $\Br(Y)=H^{2}_{\et}(Y,\mathbb{G}_{m})$ for a noetherian and regular DM stack $Y$.)
By the sheaf property of
$R^{2}f_{*}\mathbb{G}_{m}$, we see that
$H^{0}(\Spec(\Q),R^{2}f_{*}\mathbb{G}_{m})$
is a subgroup of
$\colim_{K/\Q}H^{0}(\Spec(K),R^{2}f_{*}\mathbb{G}_{m}),$
where the colimit is indexed by finite extension of $\Q$ contained in $\overline{\Q}.$
This implies that
\[
\Ker(\Br(X_{\Q})\rightarrow \Br(X_{\Qbar})) \subset \Ker(\Br(X_{\Q})\rightarrow H^{0}_{\et}(\Spec(\Q),R^{2}f_{*}\mathbb{G}_{m})).
\]
Now observe that
$\Image(\Br(\Q)\rightarrow \Br(X_{\Q}))$
lies in
$\Ker(\Br(X_{\Q})\rightarrow \Br(X_{\Qbar}))$.
\end{proof}

Now we can prove Proposition \ref{Proposition: deducing vanishing over Z}:

\begin{proof}[Proof of Proposition \ref{Proposition: deducing vanishing over Z}]
We first put ourselves in the situation (1).
Consider a Brauer class
$\alpha\in \Br(X)$
such that $\alpha\vert_{X_{\Qbar}}=0.$
By Lemma \ref{Lemma description Kernel},
we have 
$$
\alpha\vert_{X_{\Q}}=f_{\Q}^{*}(\beta)
$$
for some $\beta\in \Br(\Q)$.
We choose a section $s\colon \Spec(\ZZ)\rightarrow X$ of the structure map.
Passing to Brauer groups, we get the following commutative diagram
$$
\xymatrix{
\Br(\Q) \ar[r]^-{f^*_{\QQ}}& \Br(X_{\Q}) \ar[r]^-{s^*_{\QQ}} &  \Br(\Q) \\
\Br(\ZZ) \ar[u] \ar[r]^-{f^*}& \Br(X) \ar[u] \ar[r]^-{s^*} & \Br(\ZZ) \ar[u],
}
$$
where both horizontal rows compose to the identity.
We have that $\Br(\ZZ)=0$ by class field theory.
This implies that $\beta=0$, so that $\alpha\vert_{X_{\Q}}=0$.
This by the injectivity
(see \cite[Proposition 2.5 (iv)]{AntieauMeierBrauerElliptic}) of
$\Br(X)\rightarrow \Br(X_{\Q})$
implies that $\alpha=0,$ as desired.

Next, we put ourselves in the situation (2).
We look at again
the 7 term exact sequence for the Leray spectral sequence in the fppf topology for $f\colon X\rightarrow \Spec(\ZZ)$ and the sheaf $\mathbb{G}_{m}$.
By the same argument as in Lemma \ref{Lemma description Kernel} using that $\Br(\ZZ)=0$ and 
$$
H^{1}_{\text{fl}}(\Spec(\ZZ),\underline{\Pic}_{X/\ZZ})=H^{1}_{\et}(\Spec(\ZZ),\underline{\Pic}_{X/\ZZ})=0
$$
(\cite[Chapter III, Theorem 3.9]{MilneEtCoho}),
we see that
$
\Br(X)\rightarrow \Br(X_{\overline{\ZZ}})
$
is injective.
Now we use the injectivity of
$\Br(X_{\overline{\ZZ}}) \to \Br(X_{\overline{\Q}})$.
\end{proof}

\begin{cor}\label{Corollary: Brauer group of moduli spaces of curves over Z}
    For integers $g, n \geq 0$ with $2g-2+n > 0$,
    the condition (1) in Proposition \ref{Proposition: deducing vanishing over Z} is satisfied for $\overline{\mathcal{M}}_{g,n,\ZZ}.$
    In particular, if $\Br(\overline{\mathcal{M}}_{g, n, \overline{\Q}})=0$, then we have
    $\Br(\overline{\mathcal{M}}_{g,n,\ZZ})=0$.
\end{cor}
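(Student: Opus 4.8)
The plan is to check that $X=\overline{\mathcal{M}}_{g,n,\ZZ}$ satisfies hypothesis (1) of Proposition~\ref{Proposition: deducing vanishing over Z} — that $X(\ZZ)\neq\emptyset$ and that $X_{\overline{\Q}}$ is simply connected — after which the last sentence is immediate, since the proposition then yields that $\Br(\overline{\mathcal{M}}_{g,n,\ZZ})\to\Br(\overline{\mathcal{M}}_{g,n,\overline{\Q}})$ is injective.

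For the $\ZZ$-point I would write down an explicit totally degenerate stable curve over $\Spec(\ZZ)$. Choose a connected trivalent graph $\Gamma$ of genus $g$ with $n$ legs, i.e.\ one in which every vertex meets exactly three half-edges (a loop at a vertex contributing two of them, a leg one); such a graph exists whenever $2g-2+n>0$, and it necessarily has $2g-2+n$ vertices and $3g-3+n$ edges. Attach a copy of $\PP^{1}_{\ZZ}$ to each vertex, and to the three half-edges at that vertex the three disjoint constant sections $0,1,\infty\colon\Spec(\ZZ)\to\PP^{1}_{\ZZ}$; then, for each edge of $\Gamma$, glue the two sections labelling its two ends into a single section by the Ferrand pushout along the corresponding closed immersion. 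This produces a scheme $\mathscr{C}$, proper and flat over $\Spec(\ZZ)$ (Zariski-locally near a node it is $\Spec(\ZZ[x,y]/(xy))\to\Spec(\ZZ)$, which is flat since $\ZZ[x,y]/(xy)$ is a free $\ZZ$-module), and for every geometric point $\Spec(k)\to\Spec(\ZZ)$ the fibre $\mathscr{C}_{k}$ is the nodal curve built from $\Gamma$ and $k$ by the same recipe. Because $\{0,1,\infty\}$ is a set of three distinct points of $\PP^{1}$ over every field, the normalisation of each component of $\mathscr{C}_{k}$ is a $\PP^{1}$ carrying exactly three special points, so $\mathscr{C}_{k}$ is a connected nodal curve of arithmetic genus $g=h^{1}(\Gamma)$ with $n$ marked smooth points and no unstable component; thus $\mathscr{C}$ is a stable $n$-pointed curve of genus $g$ over $\Spec(\ZZ)$ in the sense of \cite{DeligneMumfordModuliCurves, Knudsen}, and its classifying morphism $\Spec(\ZZ)\to\overline{\mathcal{M}}_{g,n,\ZZ}$ is the required point. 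The only things to verify here — existence of $\Gamma$ for all admissible $(g,n)$, that the pushouts glue to a scheme, flatness — are routine.

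For simple connectedness I would reduce to $\overline{\mathcal{M}}_{g,n,\mathbb{C}}$, using that $\pi_{1}^{\et}$ of a proper smooth DM stack is invariant under extension of algebraically closed fields of characteristic zero (in fact only the vanishing of $H^{1}_{\et}(-,\ZZ/m\ZZ)$ for all $m$ is used later, in Lemma~\ref{lem Vanishing Galois coho Pic}), and then invoke the known fact that $\overline{\mathcal{M}}_{g,n,\mathbb{C}}$ is simply connected. I expect this to be the only substantive input: over $\mathbb{C}$ it amounts to the statement that the mapping class group $\pi_{1}(\mathcal{M}_{g,n,\mathbb{C}})$ becomes trivial after killing the Dehn twists along the vanishing cycles of the boundary of $\overline{\mathcal{M}}_{g,n}$ — elementary for small $(g,n)$ (e.g.\ for $(1,1)$, where one kills a parabolic element of $\SL_{2}(\ZZ)$) but in general a structural fact about mapping class groups.

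Finally, feeding the two verified hypotheses into Proposition~\ref{Proposition: deducing vanishing over Z}(1) gives the injectivity of $\Br(\overline{\mathcal{M}}_{g,n,\ZZ})\to\Br(\overline{\mathcal{M}}_{g,n,\overline{\Q}})$, which yields the ``in particular'' statement.
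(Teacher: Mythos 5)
Your proposal is correct and follows essentially the same route as the paper: verify hypothesis (1) of Proposition \ref{Proposition: deducing vanishing over Z} by exhibiting a $\ZZ$-point and citing simple connectedness of $\overline{\mathcal{M}}_{g,n,\overline{\Q}}$ (the paper quotes \cite[Proposition 1.1]{BoggiPikaartGaloisCovers} for the latter). Your explicit totally degenerate curve built from a trivalent graph is just the direct form of the paper's inductive construction of the $\ZZ$-point via the clutching morphisms $\overline{\mathcal{M}}_{g,n+2}\to\overline{\mathcal{M}}_{g+1,n}$ and the sections of the contraction morphisms, starting from $(\PP^1_{\ZZ};0,1,\infty)\in\overline{\mathcal{M}}_{0,3}(\ZZ)$.
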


\begin{proof}
    By \cite[Proposition 1.1]{BoggiPikaartGaloisCovers} (c.f.\ \cite[Theorem 5.8]{fringuelli2023picardgroupschememoduli}),
    we see that
    $\overline{\mathcal{M}}_{g,n,\overline{\Q}}$
    is simply connected.
    
    We shall show that
    $\overline{\mathcal{M}}_{g,n}(\ZZ)\neq \emptyset$.
    We first remark that if
    $\overline{\mathcal{M}}_{g,n}(\ZZ)\neq \emptyset$, then $\overline{\mathcal{M}}_{g,n+1}(\ZZ)\neq \emptyset$
    since the contraction morphisms
    $\overline{\mathcal{M}}_{g,n+1} \to \overline{\mathcal{M}}_{g,n}$
    (see \cite[Section 2]{Knudsen}) have a section
    $\overline{\mathcal{M}}_{g,n} \to \overline{\mathcal{M}}_{g,n+1}$.
    
    To show that $\overline{\mathcal{M}}_{g,n}(\ZZ)\neq \emptyset$, we proceed by induction on $g$.
    We assume that $g=0$.
    If $n=3$, the projective line
    $\PP^1_{\ZZ} \to \Spec (\ZZ)$ with the three sections
    defined by $0$, $1$, $\infty$ gives an element of $\overline{\mathcal{M}}_{0,3}(\ZZ)$.
    By the remark in the previous paragraph, we then have 
    $\overline{\mathcal{M}}_{0,n}(\ZZ)\neq \emptyset$ for every $n \geq 3$.
    We assume that the assertion is true for some $g \geq 0$.
    By using the clutching morphisms
    $
    \overline{\mathcal{M}}_{g,n+2} \to \overline{\mathcal{M}}_{g+1,n}
    $
    (see \cite[Section 3]{Knudsen}), and the assumption that $\overline{\mathcal{M}}_{g,n+2}(\ZZ) \neq \emptyset$,
    we see that $\overline{\mathcal{M}}_{g+1,n}(\ZZ)\neq \emptyset$ for any $n$ with $2(g+1)-2+n > 0$.
    This concludes the proof.
\end{proof}

\begin{remark}\label{Remark:Constancy of Picard scheme of moduli space of stable curves}
    By the main results of \cite{fringuelli2023picardgroupschememoduli}, the condition (2) of Proposition \ref{Proposition: deducing vanishing over Z} is also satisfied for $\overline{\mathcal{M}}_{g,n,\ZZ}$ for $g\leq 5.$
\end{remark}

The following statement, together with Remark \ref{Remark:Constancy of Picard scheme of moduli space of stable curves}, will be needed in the proof of Theorem \ref{thm: main result} for genus $3$ (see Theorem \ref{Theorem: g=3 in characteristic 0}).

\begin{lem}\label{Lemma: vanishing of Brauer class and reduction}
Assume that we are in the situation (2) in Proposition \ref{Proposition: deducing vanishing over Z}.
Let 
$\alpha\in \Br(X_{\overline{\QQ}})$
be an element of order $n$.
Assume that there is a prime $p$ such that
$n$ is not divisible by $p$ and the image
$\alpha\vert_{X_{\overline{\mathbb{F}}_{p}}}$
of $\alpha$ under the specialization map
$\Br(X_{\overline{\QQ}}) \to \Br(X_{\overline{\mathbb{F}}_{p}})$
is zero.
Then we have $\alpha=0.$
\end{lem}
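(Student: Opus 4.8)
The plan is to reduce the statement to the injectivity, on $n$-torsion, of the specialization map
\[
\mathrm{sp}\colon \Br(X_{\overline{\QQ}})[n] \longrightarrow \Br(X_{\overline{\mathbb{F}}_{p}})[n].
\]
Since $\alpha$ has order $n$ it lies in $\Br(X_{\overline{\QQ}})[n]$, and it is sent to $0$ by hypothesis, so such injectivity forces $\alpha = 0$. I would analyze $\mathrm{sp}$ by comparing, via specialization, the Kummer exact sequences
\[
0 \to \Pic(X_{\overline{\QQ}})/n \to H^{2}_{\et}(X_{\overline{\QQ}},\mu_{n}) \to \Br(X_{\overline{\QQ}})[n] \to 0
\]
and
\[
0 \to \Pic(X_{\overline{\mathbb{F}}_{p}})/n \to H^{2}_{\et}(X_{\overline{\mathbb{F}}_{p}},\mu_{n}) \to \Br(X_{\overline{\mathbb{F}}_{p}})[n] \to 0,
\]
which are available since $X_{\overline{\QQ}}$ and $X_{\overline{\mathbb{F}}_{p}}$ are regular, $n$ is invertible on both (using $p \nmid n$), and $R^{1}f_{*}\mathbb{G}_{m} = \underline{\Pic}_{X/\ZZ}$ as in the proof of Lemma \ref{Lemma description Kernel}.

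To construct $\mathrm{sp}$ and see that it is an isomorphism on the middle terms, I would choose a strictly henselian discrete valuation ring $\bar{A}$ with residue field $\overline{\mathbb{F}}_{p}$ and fraction field of characteristic $0$ (for instance a strict henselization of $\ZZ_{(p)}$), so that $X_{\bar{A}} \to \Spec(\bar{A})$ is proper and smooth and $n \in \bar{A}^{\times}$. By smooth and proper base change for Deligne--Mumford stacks (see \cite{Laszlo-Olsson}, \cite{ZhengEtCohoDM}, as already used in the proof of Lemma \ref{lem: trace morphism for generically finite etale morphism}) the sheaf $R^{2}(f_{\bar{A}})_{*}\mu_{n}$ is lisse on $\Spec(\bar{A})$, hence constant because $\Spec(\bar{A})$ is the spectrum of a strictly henselian local ring and thus simply connected. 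Taking global sections and stalks at the closed and at a geometric generic point, and using that étale cohomology is insensitive to extension of algebraically closed fields of characteristic $0$, one obtains isomorphisms
\[
H^{2}_{\et}(X_{\overline{\QQ}},\mu_{n}) \cong H^{2}_{\et}(X_{\bar{A}},\mu_{n}) \cong H^{2}_{\et}(X_{\overline{\mathbb{F}}_{p}},\mu_{n})
\]
whose composite is, by definition, $\mathrm{sp}$ on $H^{2}(\mu_{n})$; by functoriality of the Kummer sequence, $\mathrm{sp}$ induces a morphism between the two displayed short exact sequences as soon as it carries $\Pic(X_{\overline{\QQ}})/n$ into $\Pic(X_{\overline{\mathbb{F}}_{p}})/n$.

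Hypothesis (2) then handles both remaining points. Since $\underline{\Pic}_{X/\ZZ}$ is the constant sheaf attached to a finitely generated $\ZZ$-module $M$ and commutes with base change, the Leray spectral sequence for $X_{R} \to \Spec(R)$ and $\mathbb{G}_{m}$, applied with $R \in \{\overline{\QQ}, \bar{A}, \overline{\mathbb{F}}_{p}\}$ together with $\Pic(\Spec R) = \Br(\Spec R) = 0$ (clear for $R = \overline{\QQ}, \overline{\mathbb{F}}_{p}$, and true for $R = \bar{A}$ since $\Spec(\bar{A})$ has trivial higher étale cohomology) and the geometric connectedness of the fibers of $f$, identifies $\Pic(X_{\overline{\QQ}})$, $\Pic(X_{\bar{A}})$ and $\Pic(X_{\overline{\mathbb{F}}_{p}})$ all with $M$, compatibly with the restriction maps. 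In particular every line bundle on $X_{\overline{\QQ}}$ extends to $X_{\bar{A}}$, so $\mathrm{sp}$ is a morphism of Kummer sequences, and the induced map $\Pic(X_{\overline{\QQ}})/n \to \Pic(X_{\overline{\mathbb{F}}_{p}})/n$ is the identity of $M/n$, in particular surjective. A diagram chase now forces $\mathrm{sp}\colon \Br(X_{\overline{\QQ}})[n] \to \Br(X_{\overline{\mathbb{F}}_{p}})[n]$ to be injective, whence $\alpha = 0$.

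The step I expect to be the main obstacle is the second paragraph: one needs smooth and proper base change for étale cohomology of Deligne--Mumford stacks in precisely the form that makes $R^{2}(f_{\bar{A}})_{*}\mu_{n}$ lisse, and one must set up the specialization map carefully enough that its compatibility with the Kummer sequences is transparent. Everything after that is formal --- the constancy of $\underline{\Pic}_{X/\ZZ}$ in situation (2) makes all the relevant Picard and Brauer specialization maps as rigid as possible.
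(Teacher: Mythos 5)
Your proposal is correct and follows essentially the same route as the paper: both compare the Kummer sequences over $\overline{\QQ}$ and $\overline{\mathbb{F}}_p$, use smooth--proper base change to identify the middle terms $H^2_{\et}(-,\mu_n)$, use the constancy of $\underline{\Pic}_{X/\ZZ}$ from hypothesis (2) to control the Picard terms, and conclude by a diagram chase that the specialization map is injective on $n$-torsion. The only differences are expository --- you spell out the construction of the specialization map via a strictly henselian DVR and observe that surjectivity on the Picard terms already suffices, where the paper simply cites the base-change result and notes that both outer vertical maps are isomorphisms.
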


\begin{proof}
We consider the following diagram comparing the Kummer sequences
$$
\xymatrix{
0 \ar[r] & \Pic(X_{\overline{\mathbb{Q}}})\otimes_{\ZZ}\ZZ/n\ZZ \ar[r] \ar[d] & H^{2}(X_{\overline{\mathbb{Q}}},\mu_{n}) \ar[r] \ar[d] & \Br(X_{\overline{\mathbb{Q}}})[n] \ar[d] \ar[r] & 0 \\
0 \ar[r] & \Pic(X_{\overline{\mathbb{F}}_{p}})\otimes_{\ZZ}\ZZ/n\ZZ \ar[r] & H^{2}(X_{\overline{\mathbb{F}}_{p}},\mu_{n}) \ar[r] & \Br(X_{\overline{\mathbb{F}}_{p}})[n] \ar[r] & 0.}
$$
The left downward arrow is an isomorphism by (2), the middle downward arrow is an isomorphism by base change (see the proof of \cite[Proposition 7.2]{fringuelli2023picardgroupschememoduli}).
It follows that the right downward arrow is also an isomorphism.
Therefore, our assumption implies that $\alpha=0.$
\end{proof}

\subsection{Examples}\label{Subsection: Examples}

As a supplement to Proposition \ref{Proposition: deducing vanishing over Z} (and our main result), we give a few examples here.
The following claim gives a sufficient criterion for checking that $\underline{\Pic}_{X/\ZZ}$ is a constant sheaf.

\begin{lem}\label{Lemma:constantness of Picard scheme}
Let $f\colon X\rightarrow \Spec(\ZZ)$ be a proper and smooth DM stack with geometrically connected fibers.
We assume that
\[
H^1(X_{s}, \mathcal{O}_{X_{s}})=H^2(X_{s}, \mathcal{O}_{X_{s}})=0
\]
for all points $s\in \Spec(\ZZ)$.
Then $\underline{\Pic}_{X/\ZZ}$ is constant on a finitely generated abelian group.
\end{lem}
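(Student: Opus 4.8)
The plan is to show that the two vanishing hypotheses force the relative Picard space $\underline{\Pic}_{X/\ZZ}$ to be \'etale over $\Spec(\ZZ)$, and then to conclude using Minkowski's theorem that $\Spec(\ZZ)$ admits no nontrivial connected finite \'etale cover. First I would recall the set-up: since $f$ has geometrically connected fibers, Remark \ref{Remark: geometricallt connected fibers imply cohomological flatness} shows $\OO_{\Spec(\ZZ)}\simeq f_*\OO_X$ universally, so $\underline{\Pic}_{X/\ZZ}$ is representable by a commutative group algebraic space locally of finite type over $\ZZ$ whose formation commutes with arbitrary base change. The crucial point is that $\underline{\Pic}_{X/\ZZ}\to\Spec(\ZZ)$ is \'etale: since $H^2(X_s,\OO_{X_s})=0$ for every $s\in\Spec(\ZZ)$ the obstruction to infinitesimally deforming line bundles on the fibers vanishes, so $\underline{\Pic}_{X/\ZZ}$ is smooth over $\ZZ$; and since moreover $H^1(X_s,\OO_{X_s})=0$ the fibers are $0$-dimensional, so this smooth morphism is in fact \'etale (this is the local structure result underlying \cite[Proposition 3.2, Theorem 3.6]{fringuelli2023picardgroupschememoduli}).

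Granting \'etaleness, I would conclude as follows. Since $H^1(X_{\Qbar},\OO_{X_{\Qbar}})=0$ (the hypothesis at the generic point, base-changed to $\Qbar$) we get $\underline{\Pic}^{0}_{X_{\Qbar}/\Qbar}=0$ by \cite[Th\'eor\`eme 4.1.3]{BrochardPicardFunctorStack}, so $M:=\underline{\Pic}_{X_{\Qbar}/\Qbar}(\Qbar)=\NS(X_{\Qbar})$ is a finitely generated abelian group by \cite[Theorem 3.4.1]{BrochardPicardStackFiniteness}; thus the generic fiber $\underline{\Pic}_{X_{\QQ}/\QQ}$ is a disjoint union of spectra of number fields, with associated (continuous) $\Gal(\Qbar/\QQ)$-module $M$. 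Because $\underline{\Pic}_{X/\ZZ}$ is \'etale over all of $\Spec(\ZZ)$, each number field occurring is unramified at every prime of $\QQ$, hence equals $\QQ$ by Minkowski's theorem; equivalently, the $\Gal(\Qbar/\QQ)$-action on $M$ is trivial, and every connected component of $\underline{\Pic}_{X/\ZZ}$ is \'etale over the connected regular scheme $\Spec(\ZZ)$ and dominates it, hence is $\Spec(\ZZ)$ itself. Therefore $\underline{\Pic}_{X/\ZZ}\simeq\underline{M}$ is the constant sheaf on the finitely generated abelian group $M$, as required. The same reasoning shows that in the canonical exact sequence $0\to\underline{\Pic}^{\tau}_{X/\ZZ}\to\underline{\Pic}_{X/\ZZ}\to\Num_{X/\ZZ}\to 0$ the subsheaf $\underline{\Pic}^{\tau}_{X/\ZZ}$ is constant on the finite group $M_{\tors}$ and $\Num_{X/\ZZ}$ is constant on the finite free $\ZZ$-module $M/M_{\tors}$ --- which is the locally constant \'etale sheaf of finite free $\ZZ$-modules referred to in the proof of Theorem \ref{Theorem:finiteness of Brauer group}.

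The genuinely delicate step is the \'etaleness of $\underline{\Pic}_{X/\ZZ}$ over \emph{all} of $\Spec(\ZZ)$: in residue characteristic $0$ the Picard functor is automatically smooth, so it is the hypothesis $H^2(X_s,\OO_{X_s})=0$ in the finite residue characteristics that kills the deformation obstruction, and one has to run this deformation-theoretic input in the stacky setting (which is available from \cite{fringuelli2023picardgroupschememoduli}). A secondary, more bookkeeping-type point is that, to invoke the triviality of finite \'etale covers of $\Spec(\ZZ)$ cleanly, one may prefer to argue through the finite piece $\underline{\Pic}^{\tau}_{X/\ZZ}$ (which is finite over $\ZZ$, hence an honest scheme, hence the constant group scheme $\underline{M_{\tors}}$) together with the torsion-free quotient $\Num_{X/\ZZ}$, and then observe that the resulting extension is classified by $H^1_{\et}(\Spec(\ZZ),\underline{M_{\tors}})=0$; once \'etaleness is established, the remainder is entirely formal.
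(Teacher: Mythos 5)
Your identification of the two inputs --- \'etaleness of $\underline{\Pic}_{X/\ZZ}$ coming from the vanishing of $H^{1}$ and $H^{2}$ of the structure sheaves of the fibers, and Minkowski's theorem to kill the monodromy --- matches the paper, which cites the same results of \cite{fringuelli2023picardgroupschememoduli} for the first point. The gap is in how you pass from \'etaleness to constancy. The assertion that a connected component of $\underline{\Pic}_{X/\ZZ}$, being \'etale over $\Spec(\ZZ)$ and dominating it, ``hence is $\Spec(\ZZ)$ itself,'' is false as stated: an \'etale morphism is open but need not be finite or even surjective, so a component could a priori look like $\Spec(\ZZ[1/2])$, or like the spectrum of the ring of integers of a number field with finitely many primes removed; in the latter case you also cannot conclude that the number field is unramified at the missing primes, so Minkowski does not apply there. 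What is needed, and what your argument does not supply, is that the components are \emph{finite} over $\Spec(\ZZ)$ --- equivalently, that the specialization maps between (N\'eron--Severi groups of) geometric fibers are bijective.

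The paper closes exactly this gap by splitting into two pieces: $\underline{\Pic}^{\tau}_{X/\ZZ}$ is open, closed and \emph{proper} over $\Spec(\ZZ)$ by \cite[Theorem 3.6]{fringuelli2023picardgroupschememoduli}, hence finite \'etale, hence constant by Minkowski; and $\Num_{X/\ZZ}$ is locally constant by the argument of \cite[Proposition 4.2]{ekedahl2012moduliperiodssimplyconnected} (this is where one verifies the valuative criterion --- that line bundles on special fibers lift, via formal lifting plus Grothendieck existence, and that specialization is injective modulo torsion), hence constant by Minkowski. Your closing remark does gesture at this route, but you assert both the finiteness of $\underline{\Pic}^{\tau}_{X/\ZZ}$ and the constancy of $\Num_{X/\ZZ}$ by appeal to ``the same reasoning,'' which is the flawed domination argument; the (local) constancy of $\Num_{X/\ZZ}$ is the genuinely non-formal step and must be argued or cited separately. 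On the other hand, your observation that the resulting extension of $\Num_{X/\ZZ}$ by $\underline{\Pic}^{\tau}_{X/\ZZ}$ splits because $H^{1}_{\et}(\Spec(\ZZ),\underline{M_{\tors}})=0$ is correct and is a point the paper leaves implicit.
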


\begin{proof}
Once one knows that the sheaf $\underline{\Pic}_{X/\ZZ}$ is constant, it follows that the group must be finitely generated (c.f.\ the proof of Lemma \ref{lem Vanishing Galois coho Pic}).
Consider $\underline{\Pic}^{\tau}_{X/\ZZ}\subset \underline{\Pic}_{X/\ZZ}$ (see \cite[Section 3]{fringuelli2023picardgroupschememoduli}).
By our assumptions, we know the following properties:
\begin{itemize}
    \item $\underline{\Pic}_{X/\ZZ}$ is \'etale over $\Spec(\ZZ)$.
    \item $\underline{\Pic}^{\tau}_{X/\ZZ}\subset \underline{\Pic}_{X/\ZZ}$ is open and closed.
    \item $\underline{\Pic}^{\tau}_{X/\ZZ}$ is proper over $\Spec(\ZZ)$.
\end{itemize}
See \cite[Proposition 3.2, Theorem 3.6]{fringuelli2023picardgroupschememoduli}.
In particular, we know that $\underline{\Pic}^{\tau}_{X/\ZZ}$ is finite étale over $\Spec(\ZZ),$ so that it has to be constant by Minkowski's theorem.
We look at the exact sequence
\[
0 \to \underline{\Pic}_{X/\ZZ}^{\tau} \to \underline{\Pic}_{X/\ZZ} \to \Num_{X/\ZZ} \to 0.
\]
(For any geometric point $s \rightarrow \Spec(\ZZ),$
the fiber
$
\Num_{X/\ZZ}(s)
$
is the 
maximal torsion free quotient of
$\NS(X_{s})$.)
Now it suffices to show that $\Num_{X/\ZZ}$ is constant.
By the same argument as \cite[Proposition 4.2]{ekedahl2012moduliperiodssimplyconnected}, we see that $\Num_{X/\ZZ}$ is locally constant, and hence constant by Minkowski's theorem.
(In \textit{loc.cit.}, the authors assume the projectivity of the morphism $f$, but the properness is enough for the argument.)
\end{proof}

We give an example where $X(\ZZ)=\emptyset,$ but the condition (2)
of Proposition \ref{Proposition: deducing vanishing over Z} still holds, which explains why we put this condition.

\begin{examp}\label{examp: flag}
    Let $G$ be a non quasi-split reductive group scheme over $\Spec(\ZZ).$
    Consider the scheme $X$ of Borel subgroups of $G.$ This is a projective and smooth scheme over $\Spec(\ZZ).$
    By assumption, we have that $X(\ZZ)=\emptyset.$
    However we still have that
    $\Br(X)=\Br(X_{\overline{\QQ}})=0.$
    In fact, since $X_{\overline{\QQ}}$ is rational, we have $\Br(X_{\overline{\QQ}})=0$.
    By Kempf's theorem, we have
    \[
    H^i(X_{\Fp}, \mathcal{O}_{X_{\Fp}})=0
    \]
    for any prime $p >0$ and $i > 0$, which implies that $\underline{\Pic}_{X/\ZZ}$ is constant on a finitely generated abelian group by Lemma \ref{Lemma:constantness of Picard scheme}.
    This then implies via Proposition \ref{Proposition: deducing vanishing over Z} that $\Br(X)=0,$ as desired.
\end{examp}

We next give an easy example
of a proper and smooth DM stack $X$ over $\Spec(\ZZ)$
such that $\Br(X) \to \Br(X_{\overline{\Q}})$ is not injective, in which the conditions (1) and (2) in Proposition \ref{Proposition: deducing vanishing over Z} are not satisfied.
We note that for a finite group $G$, the classifying stack
$[\Spec(\ZZ)/G]$
is proper and \'etale over $\Spec(\ZZ)$.

\begin{examp}\label{examp: Brauer over integers not zero}
    Consider $X=[\Spec(\ZZ)/G]$
    for $G:=\ZZ/2\ZZ$.
    Here both (1) and (2) of Proposition \ref{Proposition: deducing vanishing over Z} fail.
    Nevertheless, since $\Pic(\ZZ)=0$ and $\Br(\ZZ)=0,$ we obtain that
    $$
    \Br(X)=H^{2}(G, \mathbb{G}_{m}(\ZZ)) \simeq \ZZ/2\ZZ
    $$
    (here the action of $G$ on $\mathbb{G}_{m}(\ZZ)=\lbrace \pm 1 \rbrace$ is the trivial one).
    The same argument also implies that
    $\Br(X_{\overline{\Q}})= H^{2}(G, \mathbb{G}_{m}(\overline{\Q}))= 0.$
\end{examp}

Finally, let us include the following example,
which shows our vanishing result (Theorem \ref{thm: main result}) does not hold for proper and smooth DM stacks over $\Spec (\ZZ)$ in general.

\begin{examp}\label{examp: Brauer over geometric generic fiber not zero}
    Consider $X=[\Spec(\ZZ)/S_{4}],$ where $S_{4}$ is the symmetric
    group on $4$ elements.
    By the same argument as in Example \ref{examp: Brauer over integers not zero},
    we have
    \[
    \Br(X)=H^{2}(S_4, \mathbb{G}_{m}(\ZZ)) \simeq \ZZ/2\ZZ, \quad \Br(X_{\overline{\Q}})= H^{2}(S_4, \mathbb{G}_{m}(\overline{\Q})) \simeq \ZZ/2\ZZ.
    \]
\end{examp}

\section{Brauer groups of moduli stacks of stable curves}\label{Section: Brauer groups of moduli spaces of stable curves}

In this section,
we prove our vanishing results when working over $\overline{\Q}.$
Recall that the case where $g=0$ is easy to prove; see Remark \ref{Remark:genus zero case}.
So it is enough to consider the case where $g \geq 1$.

\subsection{Genus one}

The next proposition is easy to prove because one has a very good handle on the geometry of $\mathcal{M}_{1,1,\mathbb{C}}$.

\begin{prop}\label{Prop: Vanishing (1,1) bar Q}
$\Br(\mathcal{M}_{1,1,\overline{\Q}})=0;$ in particular, we have that
$\Br(\overline{\mathcal{M}}_{1,1,\overline{\Q}})=0.$
\end{prop}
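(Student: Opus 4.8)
The plan is to read off $\Br(\mathcal{M}_{1,1,\overline{\Q}})$ directly from the Kummer sequence, using that both $\Pic$ and the mod-$\ell^m$ \'etale cohomology of $\mathcal{M}_{1,1,\overline{\Q}}$ are completely understood. Since $\mathcal{M}_{1,1,\overline{\Q}}$ is a regular DM stack, $\Br(\mathcal{M}_{1,1,\overline{\Q}})=H^2_{\et}(\mathcal{M}_{1,1,\overline{\Q}},\mathbb{G}_m)$ by \cite[Proposition 2.5]{AntieauMeierBrauerElliptic}, and since its coarse space is $\mathbb{A}^1_{\overline{\Q}}$ we have $\mathbb{G}_m(\mathcal{M}_{1,1,\overline{\Q}})=\overline{\Q}[j]^{\times}=\overline{\Q}^{\times}$, which is divisible. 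Hence for each prime power $\ell^m$ the Kummer sequence yields a short exact sequence of finite groups
\[
0\to\Pic(\mathcal{M}_{1,1,\overline{\Q}})/\ell^m\to H^2_{\et}(\mathcal{M}_{1,1,\overline{\Q}},\mu_{\ell^m})\to\Br(\mathcal{M}_{1,1,\overline{\Q}})[\ell^m]\to 0,
\]
and it suffices to show that the first two terms have the same cardinality for every $\ell$ and every $m$.

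For the Picard group I would use Deligne's presentation $\mathcal{M}_{1,1,\overline{\Q}}\cong[U/\mathbb{G}_m]$, valid since $6\in\overline{\Q}^{\times}$, where $U=\mathbb{A}^2_{\overline{\Q}}\setminus\{4a_4^3+27a_6^2=0\}$ and $\mathbb{G}_m$ acts with weights $(4,6)$. The complement of $U$ in $\mathbb{A}^2$ is the principal divisor of $\Delta=4a_4^3+27a_6^2$, so $\Pic(U)=0$ and $\mathcal{O}(U)^{\times}=\overline{\Q}^{\times}\cdot\Delta^{\ZZ}$; as $\mathbb{G}_m$ is connected, $\Delta$ is a $\mathbb{G}_m$-eigenvector of weight $12$, and the exact sequence $\mathcal{O}(U)^{\times}/\overline{\Q}^{\times}\to\widehat{\mathbb{G}_m}\to\Pic^{\mathbb{G}_m}(U)\to\Pic(U)$ gives $\Pic(\mathcal{M}_{1,1,\overline{\Q}})=\Pic^{\mathbb{G}_m}(U)\cong\ZZ/12\ZZ$ (this is of course the classical computation of Fulton--Olsson). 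For the \'etale cohomology I would pass to $\mathbb{C}$ — using insensitivity of $H^i_{\et}(-,\mu_{\ell^m})$ to the algebraically closed base field together with the \'etale--Betti comparison for DM stacks, as in \cite[Proposition 4.1.6]{KubrakPrikhodkopHodgeStacks} — and then use the analytic uniformization: $\mathcal{M}_{1,1,\mathbb{C}}^{\an}$ is the quotient orbifold $[\mathbb{H}/\SL_2(\ZZ)]$, and since $\mathbb{H}$ is contractible this has the cohomology of $B\SL_2(\ZZ)$, so $H^2_{\et}(\mathcal{M}_{1,1,\overline{\Q}},\mu_{\ell^m})\cong H^2(\SL_2(\ZZ),\ZZ/\ell^m\ZZ)$. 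The integral cohomology of $\SL_2(\ZZ)\cong\ZZ/4\ZZ\ast_{\ZZ/2\ZZ}\ZZ/6\ZZ$ is computed by a Mayer--Vietoris argument, giving $H^1=0$, $H^2\cong\ZZ/12\ZZ$ and $H^3=0$; by the universal coefficient theorem $H^2(\SL_2(\ZZ),\ZZ/\ell^m\ZZ)\cong\ZZ/\gcd(12,\ell^m)\ZZ$.

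Combining, $\lvert H^2_{\et}(\mathcal{M}_{1,1,\overline{\Q}},\mu_{\ell^m})\rvert=\gcd(12,\ell^m)=\lvert\Pic(\mathcal{M}_{1,1,\overline{\Q}})/\ell^m\rvert$; as the first arrow of the displayed sequence is injective between finite groups of equal order, it is an isomorphism, so $\Br(\mathcal{M}_{1,1,\overline{\Q}})[\ell^m]=0$ for all $\ell,m$, i.e.\ $\Br(\mathcal{M}_{1,1,\overline{\Q}})=0$ (the group being torsion). The ``in particular'' is then immediate: $\mathcal{M}_{1,1,\overline{\Q}}$ is a dense open in the regular DM stack $\overline{\mathcal{M}}_{1,1,\overline{\Q}}$, so $\Br(\overline{\mathcal{M}}_{1,1,\overline{\Q}})$ injects into $\Br(\mathcal{M}_{1,1,\overline{\Q}})=0$ by \cite[Proposition 2.5 (iv)]{AntieauMeierBrauerElliptic}.

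The argument is short, and the only step that needs genuine care is the comparison of cardinalities — concretely, the fact that $H^3(\SL_2(\ZZ),\ZZ)$ is torsion-free (indeed zero): this is exactly what prevents $H^2_{\et}(\mathcal{M}_{1,1,\overline{\Q}},\mu_{\ell^m})$ from being strictly larger than $\Pic/\ell^m$, and hence what makes the Brauer group vanish on the nose (in particular at the primes $2$ and $3$, the only ones dividing $12$). One obtains it from the same amalgam Mayer--Vietoris computation: $H_2(\SL_2(\ZZ),\ZZ)=0$ and $H_3(\SL_2(\ZZ),\ZZ)$ is finite, so $H^3(\SL_2(\ZZ),\ZZ)=0$ by universal coefficients. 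As an alternative route one could first deduce $\Br(\overline{\mathcal{M}}_{1,1,\overline{\Q}})=0$ from Lemma \ref{lem: short exact sequence with H3} applied to $\overline{\mathcal{M}}_{1,1,\overline{\Q}}\cong\mathbb{P}(4,6)_{\overline{\Q}}$ (where $b_2=\rho=1$ and $H^3(\mathbb{P}(4,6)^{\an},\ZZ)_{\tors}=0$), and then it would remain only to kill $\Br(\mathcal{M}_{1,1,\overline{\Q}})[2]$, again via the Kummer sequence above; in that route the prime $2$ is the one real obstacle.
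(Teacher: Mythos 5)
Your proof is correct and rests on exactly the same ingredients as the paper's own argument: the Kummer sequence, invariance of $\mu_{\ell^m}$-cohomology under change of algebraically closed field plus the \'etale--Betti comparison identifying it with the cohomology of $[\mathbb{H}/\SL_2(\ZZ)]$, Fulton--Olsson's $\Pic(\mathcal{M}_{1,1})\cong\ZZ/12\ZZ$, and the vanishing of $H^3(\SL_2(\ZZ),\ZZ)$ via the amalgam $\ZZ/4\ZZ\ast_{\ZZ/2\ZZ}\ZZ/6\ZZ$. The only difference is organizational --- you count cardinalities in the Kummer sequence directly, while the paper first transfers $\Br$ to the analytic category and identifies it with $H^{3}(\mathcal{M}^{\an}_{1,1,\mathbb{C}},\ZZ)_{\tors}$ --- so this is essentially the same approach.
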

\begin{proof}
This is proved in \cite[Theorem 1.1 (1)]{AntieauMeierBrauerElliptic}.
Here we give an independent argument:
We first claim that 
\[
\Br(\mathcal{M}_{1,1,\overline{\Q}})\simeq\Br(\mathcal{M}_{1,1,\mathbb{C}}).
\]
In fact, this can be deduced by comparing the Kummer exact sequences for $\mathcal{M}_{1,1,\overline{\Q}}$ and $\mathcal{M}_{1,1,\mathbb{C}},$ using invariance of étale cohomology with torsion coefficients under choice of algebraically closed base field (c.f.\ proof of Lemma \ref{lem: short exact sequence with H3}) and $\Pic(\mathcal{M}_{1,1,\overline{\Q}})\simeq \Pic(\mathcal{M}_{1,1,\mathbb{C}})\simeq \mathbb{Z}/12\mathbb{Z}$ \cite{FultonOlssonPicM11}.
Repeating this argument, but now using as an input the string of identifications
$\Pic(\mathcal{M}^{\an}_{1,1,\mathbb{C}})\simeq \Pic(\mathcal{M}_{1,1,\mathbb{C}})\simeq \mathbb{Z}/12\ZZ$ (see \cite{ArbarelloCornalbaPicCurves})
and étale-Betti comparison for DM stacks, we see that
$$
\Br(\mathcal{M}^{\an}_{1,1,\mathbb{C}})=\Br(\mathcal{M}_{1,1,\mathbb{C}}).
$$
Here
$\Br(\mathcal{M}^{\an}_{1,1,\mathbb{C}}):=H^2(\mathcal{M}^{\an}_{1,1,\mathbb{C}}, \mathbb{G}_m)_{\tors}$.

Thus it suffices to show that $\Br(\mathcal{M}^{\an}_{1,1,\mathbb{C}})=0$.
Observe that it then suffices to show that
$$
\Br(\mathcal{M}^{\an}_{1,1,\mathbb{C}})\simeq H^{3}(\mathcal{M}^{\an}_{1,1,\mathbb{C}},\mathbb{Z})_{\tors}=0.
$$
But
$$
H^{3}(\mathcal{M}^{\an}_{1,1,\mathbb{C}},\mathbb{Z})=H^{3}(\SL_{2}(\ZZ),\ZZ)=0
$$
using that $\SL_{2}(\ZZ)=\ZZ/4\ZZ \ast_{\ZZ/2\ZZ} \ZZ/6\ZZ$ (use \cite[Corollary 7.7]{BrownCohoGroups}).
\end{proof}
The cases $(g,n)=(1,n)$ for $2\leq n \leq 6$ lie deeper and require very recent work of Battistella-Di Lorenzo \cite{battistella2024wallcrossingintegralchowrings} (who compute the integral Chow ring in the cases $(1,3)$ and $(1,4)$).

\begin{prop}\label{Prop: vanishing g 1 n 2 3 4 5 6}
    $\Br(\overline{\mathcal{M}}_{1,n,\overline{\Q}})=0$
    for $2\leq n \leq 6.$
\end{prop}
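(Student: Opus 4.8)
The plan is to exploit the wall-crossing picture of Battistella--Di Lorenzo \cite{battistella2024wallcrossingintegralchowrings}, which (building on Smyth \cite{Smyth19DiagramG1} and Lekili--Polishchuk \cite{LP19ModularCompactification}) connects $\overline{\mathcal{M}}_{1,n}$, for $n$ in the stated range, to a stacky weighted projective space (or a stacky Grassmannian) through a finite diagram of smooth Deligne--Mumford stacks over $\ZZ$ (hence over $\overline{\Q}$) whose arrows are weighted or ordinary blow-ups along smooth centers. First I would recall that, by Lemma \ref{lem: blow up regular center smooth DM stack}, every stack appearing in this diagram is smooth over $\overline{\Q}$, and that the centers of the blow-ups have codimension $\geq 2$. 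Then I would invoke Corollary \ref{cor: Brauer groups blow ups}: along each arrow $\pi\colon \widetilde{X}\to X$ which is an isomorphism away from a closed substack of codimension $\geq 2$, vanishing of $\Br(X)$ forces vanishing of $\Br(\widetilde X)$ (part (2)); and conversely $\pi^{*}\colon \Br(X)\hookrightarrow \Br(\widetilde X)$ is always injective (part (1)), so vanishing at $\widetilde X$ propagates back to $X$. Since the diagram is connected, it therefore suffices to prove $\Br=0$ at a single well-chosen vertex.

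The natural vertex to start from is the stacky weighted projective space (resp.\ Grassmannian) at one end of the diagram. Here I would cite the computation of Shin \cite{ShinBrauerWeighted}, which gives $\Br(\mathcal{P}_{\overline{\Q}})=0$ for a stacky weighted projective space $\mathcal{P}$ over $\overline{\Q}$ (its coarse space is an honest weighted projective space, which is rational, and the gerbe/stacky structure does not contribute). For the case in which the relevant end of the diagram is a stacky Grassmannian rather than a weighted projective space, I would reduce likewise: its coarse moduli space is a rational projective variety over $\overline{\Q}$ so has trivial Brauer group, and one handles the stacky locus by the same purity argument, removing the (codimension $\geq 2$) stacky locus and applying \v Cesnavi\v cius's purity (Proposition \ref{Prop: purity Brauer}) together with \cite[Proposition 2.5 (iv)]{AntieauMeierBrauerElliptic} as in the proof of Corollary \ref{cor: Brauer groups blow ups}.

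With $\Br=0$ established at the starting vertex, I would then traverse the diagram edge by edge, using Corollary \ref{cor: Brauer groups blow ups}(2) to push vanishing across each blow-up in the "forward" direction and Corollary \ref{cor: Brauer groups blow ups}(1) (injectivity of $\pi^{*}$) to transport vanishing backward across any edge whose source is the stack of interest; since $\overline{\mathcal{M}}_{1,n,\overline{\Q}}$ is one of the vertices, we conclude $\Br(\overline{\mathcal{M}}_{1,n,\overline{\Q}})=0$. I expect the main obstacle to be \emph{bookkeeping rather than conceptual}: one must check that each blow-up center in \cite{battistella2024wallcrossingintegralchowrings} is smooth over $\overline{\Q}$ \emph{and} of codimension $\geq 2$ (so that both parts of Corollary \ref{cor: Brauer groups blow ups} apply and no nontrivial Brauer class can be "created" along a divisorial modification), and that the diagram is genuinely connected through smooth DM stacks for every $n$ with $2\leq n\leq 6$ — the cases $n=5,6$ relying on the Grassmannian model and the less-explicit parts of \cite{battistella2024wallcrossingintegralchowrings}, \cite{Smyth19DiagramG1}, \cite{LP19ModularCompactification}, rather than the explicit integral Chow ring computations available only for $n=3,4$. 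Care is also needed with weighted blow-ups, where the exceptional divisor is a stacky projective bundle; but since these are still regular and the weighted blow-up is an isomorphism over a codimension-$\geq 2$ open, Corollary \ref{cor: Brauer groups blow ups} applies verbatim.
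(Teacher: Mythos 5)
Your proposal is correct and follows essentially the same route as the paper: the Battistella--Di Lorenzo wall-crossing diagram, smoothness of all intermediate stacks via Lemma \ref{lem: blow up regular center smooth DM stack}, propagation of vanishing along the zig-zag using both parts of Corollary \ref{cor: Brauer groups blow ups}, and Shin's computation for the stacky weighted projective space (resp.\ the Grassmannian identification of Lekili--Polishchuk, which the paper needs only for $n=6$, not $n=5$) as the anchor vertex.
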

\begin{proof}
    If not specified, all stacks in this proof will be over $\overline{\Q}.$
    (In fact, the following argument also gives another proof of the equality $\Br(\overline{\mathcal{M}}_{1,1,\overline{\Q}})=0$.)
    
    By \cite{RSPW19GenusOneLogarithmic}, \cite{Smyth19DiagramG1} (see also the introduction to \cite{battistella2024wallcrossingintegralchowrings}) we have a 
    diagram
    $$
        \xymatrix{
        &    &  \ar[ld]^-{\rho_{\frac{3}{2}}} \overline{\mathcal{M}}_{1,n}(\frac{3}{2}) \ar[rd]^-{\rho_{2}} & & \ar[ld]^-{\rho_{\frac{5}{2}}} \cdots \ar[rd]^-{\rho_{n-2}} & \\     
        \overline{\mathcal{M}}_{1,n} \ar[r]^-{\rho_{1}}   & \overline{\mathcal{M}}_{1,n}(1) &  & \overline{\mathcal{M}}_{1,n}(2) & \cdots & \overline{\mathcal{M}}_{1,n}(n-2) \ar[d]^-{\rho_{n-1}} & \\
        & & & & & \overline{\mathcal{M}}_{1,n}(n-1).
        }
    $$
    We know the following:
    \begin{itemize}
        \item By \cite[Theorem 1.12]{battistella2024wallcrossingintegralchowrings} $m\leq 5,$ the morphisms $\rho_{m}$ are weighted blow-ups in loci which are of codimension greater or equal than $2.$
        \item The morphisms $\rho_{\frac{j}{2}}$ are ordinary blow-ups (this is by construction).
        \item By \cite[Theorem 1.5]{battistella2024wallcrossingintegralchowrings}, the stacks $\overline{\mathcal{M}}_{1,n}(m)$ are smooth for $m\leq 5.$ Observe that this also implies (via Lemma \ref{lem: blow up regular center smooth DM stack}) that the stacks $\overline{\mathcal{M}}_{1,n}(\frac{j}{2})$ are smooth.
        \item For $2\leq n \leq 6,$ we have that $\Br(\overline{\mathcal{M}}_{1,n}(n-1))=0.$
        Namely, for $2\leq n \leq 5,$ this follows from \cite[Proposition 1.8]{battistella2024wallcrossingintegralchowrings} together with \cite[Theorem 1.2]{ShinBrauerWeighted}. 
        For $n=6,$ we use \cite[Corollary 1.5.5, Proposition 1.7.1]{LP19ModularCompactification}, together with
        $$
            \Br(\text{Gr}(k,m))=0.
        $$
        \item By \cite[Remark 3.2.10]{WeightedBlowUps} a weighted blow-up is an isomorphism away from the blow-up center.
    \end{itemize}
    Finally, observe that all entries in the above diagram are irreducible as DM stacks, so that we may apply Corollary \ref{cor: Brauer groups blow ups} step-by-step to deduce the desired vanishing.
\end{proof}

\subsection{Genus two}
Next, we quickly explain why the case $g=2$ is already contained in the literature:
\begin{prop}[Di Lorenzo-Pirisi]\label{Proposition:genus two case}
    The equality
    $
        \Br(\overline{\mathcal{M}}_{2,\overline{\mathbb{Q}}})=0
    $
    holds.
\end{prop}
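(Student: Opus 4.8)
This vanishing is established in the literature, so the plan is ultimately to invoke \cite{DiLorenzo-PirisiHyperEllEven}; but let me first record how it fits into the framework developed above, and then describe the shape of the argument.

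First I would use that $\overline{\mathcal{M}}_2$ is proper and smooth over $\Spec(\ZZ)$, so that Theorem \ref{thm: FontaineAbrashkinDMStacks} applies and gives $b_2(\overline{\mathcal{M}}_{2,\overline{\QQ}})=\rho(\overline{\mathcal{M}}_{2,\overline{\QQ}})$. Feeding this into Lemma \ref{lem: short exact sequence with H3}, we obtain $\Br(\overline{\mathcal{M}}_{2,\overline{\QQ}})^0=0$ together with a canonical isomorphism
\[
\Br(\overline{\mathcal{M}}_{2,\overline{\QQ}}) \simeq H^3(\overline{\mathcal{M}}_{2,\mathbb{C}}^{\an},\ZZ)_{\tors} .
\]
Thus the proposition is equivalent to the purely topological assertion that this last torsion group vanishes.

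For this I would invoke the computation of Di Lorenzo and Pirisi \cite{DiLorenzo-PirisiHyperEllEven}. Since every smooth projective genus-$2$ curve is hyperelliptic, $\mathcal{M}_2$ is the moduli stack of smooth hyperelliptic genus-$2$ curves, and $\overline{\mathcal{M}}_2$ is the corresponding compactified stack of hyperelliptic curves of even genus studied in \emph{loc.\ cit.}; read over $\overline{\QQ}$, their result is precisely $\Br(\overline{\mathcal{M}}_{2,\overline{\QQ}})=0$. The structure of that computation runs parallel to the genus-$3$ discussion in Section \ref{Subsection: genus three}: one first determines $\Br(\mathcal{M}_{2,\overline{\QQ}})$ on the open stratum via Igusa's presentation of genus-$2$ curves as $\GL_2$-orbits of smooth binary sextics, which realizes $\mathcal{M}_2$ as an explicit quotient stack, and then one shows, using purity of the Brauer group (Proposition \ref{Prop: purity Brauer}) and residue computations along the boundary divisors $\Delta_0$ and $\Delta_1$ of $\overline{\mathcal{M}}_2$, that no nonzero class of $\mathcal{M}_{2,\overline{\QQ}}$ extends over the compactification.

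The main obstacle, in a from-scratch account, is exactly this boundary analysis: one must check that the classes on $\mathcal{M}_{2,\overline{\QQ}}$ that could a priori obstruct vanishing have nontrivial residue along at least one boundary divisor. As in genus $3$, one way to see this is to produce an explicit degenerating family of genus-$2$ curves over $\overline{\mathbb{F}}_q((t))$ on which the relevant class is nontrivial; another is a direct computation with the stacky normal-crossings boundary of $\overline{\mathcal{M}}_2$. In any case this is the technical heart of the matter, and for it we simply defer to \cite{DiLorenzo-PirisiHyperEllEven}.
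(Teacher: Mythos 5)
Your proposal is correct and takes essentially the same route as the paper: both reduce the statement to the identification of $\overline{\mathcal{M}}_{2,\overline{\mathbb{Q}}}$ with the compactified stack of stable hyperelliptic genus-$2$ curves (every genus-$2$ curve being hyperelliptic, and the two stacks having the same dimension) and then cite the computation of Di Lorenzo--Pirisi for that stack. The preliminary reduction to $H^{3}(\overline{\mathcal{M}}_{2,\mathbb{C}}^{\an},\ZZ)_{\tors}$ via Theorem \ref{thm: FontaineAbrashkinDMStacks} and Lemma \ref{lem: short exact sequence with H3} is harmless but unnecessary, since the cited result already gives the vanishing of the Brauer group directly.
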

\begin{proof}
    This result is contained in \cite[Theorem A.1]{DiLorenzo-PirisiHyperEllEven}.
    Although this is probably trivial for the experts, let us explain why.
    Let $\mathcal{H}_{g,\overline{\mathbb{Q}}}$
    be the moduli stack over $\overline{\mathbb{Q}}$ of hyper-elliptic curves of genus $g$
    (in the sense of \cite{ArsieVistoliCyclicCovers}, i.e.\ a genus $g$ hyper-elliptic curve over a scheme $S$ consists of $X\rightarrow P \rightarrow S,$ where
    $X\rightarrow S$ is a proper and smooth morphism with all geometric fibers connected curves of genus $g,$ $X\rightarrow P$ is a morphism of schemes which is finite, fppf and of degree $2$ and $P\rightarrow S$ is a family of genus $0$ curves).
    This is an irreducible smooth algebraic stack over $\overline{\mathbb{Q}}$ of dimension $2g-1,$ as proven in \cite{ArsieVistoliCyclicCovers}.
    For $g\geq 2$, the morphism
    $
        \mathcal{H}_{g,\overline{\mathbb{Q}}}\rightarrow \mathcal{M}_{g,\overline{\mathbb{Q}}}
    $
    that forgets $P$
    is a closed immersion.
    For $g=2,$ it follows that
    $$
        \mathcal{H}_{2,\overline{\mathbb{Q}}}\simeq \mathcal{M}_{2,\overline{\mathbb{Q}}}
    $$
    since $\mathrm{dim} (\mathcal{H}_{2,\overline{\mathbb{Q}}})=\mathrm{dim} (\mathcal{M}_{2,\overline{\mathbb{Q}}})=3$.
    The stack of stable hyper-elliptic genus $g$ curves $\overline{\mathcal{H}}_{g,\overline{\mathbb{Q}}}$ is identified with the closure of $\mathcal{H}_{g,\overline{\mathbb{Q}}}$ inside of $\overline{\mathcal{M}}_{g,\overline{\mathbb{Q}}}.$
    It follows that
    $$
        \overline{\mathcal{H}}_{2,\overline{\mathbb{Q}}}\simeq \overline{\mathcal{M}}_{2,\overline{\mathbb{Q}}}.
    $$
    But the mentioned result of Di Lorenzo-Pirisi implies that
    $
        \Br(\overline{\mathcal{H}}_{2,\overline{\mathbb{Q}}})=0.
    $
\end{proof}

\subsection{Genus three}\label{Subsection: genus three}

The purpose of this subsection is to prove that
$\Br(\overline{\mathcal{M}}_{3, \overline{\QQ}})=0$; see Theorem \ref{Theorem: g=3 in characteristic 0}.
For this, we first recall some results from \cite{Lorenzo-Pirisi}.

It is shown in \cite{Lorenzo-Pirisi} that for a field $k$ of characteristic $p \geq 3$, there is a natural isomorphism
\[
\Br(\mathcal{M}_{3, k}) \simeq \Br(k) \oplus \ZZ/2\ZZ \oplus B_p
\]
where $B_p$ is a $p$-primary torsion group, while for $\text{char}(k)=0,$ we have
\[
\Br(\mathcal{M}_{3,k}) \simeq \Br(k) \oplus \ZZ / 2\ZZ.
\]
We denote the generator of
$\ZZ/2\ZZ \subset \Br(\mathcal{M}_{3, k})$
by $\gamma$.
This generator $\gamma$ is constructed explicitly in \cite{Lorenzo-Pirisi}.
Let $C$ be a projective smooth curve over $k$ of genus $3$.
Let $\gamma(C) \in \Br(k)[2]$ be the element obtained by pulling back $\gamma \in \Br(\mathcal{M}_{3, k})$ along the morphism $\Spec (k) \to \mathcal{M}_{3, k}$ corresponding to $C$.
We shall recall the description of $\gamma(C)$ in a special case which will be needed for our purposes.


\begin{examp}\label{Example: cubic surface and the description of gamma}
    Let $X \subset \PP^3_k$ be a smooth cubic surface over $k$.
    It is well-known that $X_{\overline{k}}$ contains exactly $27$ lines.
    Since the absolute Galois group of $k$ acts on the set of lines in $X_{\overline{k}}$, we obtain the associated \'etale algebra $E_X$ over $k$ of degree $27$.
    Let $P \in X(k)$ be a $k$-rational point which is not contained in any line in $X_{\overline{k}}$ and let
    $H \subset \PP^3_k$ be a hyperplane such that $P$ is not contained in $H$.
    Then we have a morphism
    \[
    \mathrm{Bl}_P (X) \to H
    \]
    induced by the projection onto $H$.
    This morphism expresses $\mathrm{Bl}_P (X)$ as a double covering of $H$ whose ramification locus is a smooth quartic curve $C \subset H$ over $k$.
    Then
    \[
    \gamma(C) \in \Br(k)[2] \simeq H^2_{\et}(\Spec (k), \mu_2)
    \]
    agrees with the second Galois-Stiefel-Whitney class $\alpha_2(E_X)$
    as explained in \cite[Section 3.4, Section 3.5]{Lorenzo-Pirisi}.
    We recall that $\alpha_2(E_X)$ is the degree $2$ term of the total Galois-Stiefel-Whitney class $\alpha_{\mathrm{tot}}(E_X)$; see \cite[Section 3.3]{Lorenzo-Pirisi} for the definition.
    Strictly speaking, the second Galois-Stiefel-Whitney class $\alpha_2(E_X)$ is an element of $H^2_{\et}(\Spec (k), \mu^{\otimes 2}_2)$.
    However, since $\mu_2$ can be naturally identified with $\ZZ/2\ZZ$, we may regard $\alpha_2(E_X)$ as an element of $H^2_{\et}(\Spec (k), \mu_2)$.
\end{examp}

Let $p$ be a prime such that $p \geq 11$.
    Let $\Fq$ be a finite field with $q=p^r$ elements.
    We consider a local field $F=\Fq((t))$.
In the following Example \ref{Example: nontrivial gamma, q-1/2 is even} (resp.\ Example \ref{Example: nontrivial gamma, q-1/2 is odd}), we will give an example of a smooth quartic curve $C$ over $F$ such that
$\gamma(C) \in \Br(F)[2]$
is not zero when $(q-1)/2$ is even (resp.\ odd).

Let $E_1:=F(\sqrt{t})$, which is a totally ramified extension of $F$ of degree $2$.
    Let $u \in \Fq^\times$ be an element such that $\sqrt{u}$ is not contained in $\Fq$ and let
    $E_2:=F(\sqrt{u})$.
    Then $E_2$ is an unramified extension of $F$ of degree $2$.
    Let $K:=F(\sqrt{t}, \sqrt{u})$.

\begin{examp}\label{Example: nontrivial gamma, q-1/2 is even}
    We set
    \begin{align*}
        &u_1:= 1 + \sqrt{t}, &u_2 &:= 1 - \sqrt{t}, \\
        &u_3:= 1 + \sqrt{u}, &u_4 &:= 1 - \sqrt{u}, \\
        &u_5:= 2 + t, &u_6 &:= t.
    \end{align*}
    Since $u_1$ is conjugate to $u_2$, they come from a closed point
    $U_{12} \colon \Spec (E_1) \hookrightarrow \PP^2_{F}$.
    Similarly, $u_3$ and $u_4$ come from a closed point
    $U_{34} \colon \Spec (E_2) \hookrightarrow \PP^2_{F}$.
    One can check that they satisfy the conditions
    \[
    u_i \neq u_j \, \,  (i \neq j), \quad u_i+u_j+u_k \neq 0 \, \, (\text{$i, j, k$ are distinct}), \quad \sum^6_{i=1} u_i \neq 0, 
    \]
    or equivalently, the $6$ points $U_i:=[u^{-2}_i, u^{-3}_i, 1] \in \PP^2_{K}$ are in a general position (i.e.\ they are not on a conic and no three of $u_i$ lie on a line).
    It thus follows that the blow-up $X$ of $\PP^2_{F}$ along the closed points $U_{12}$, $U_{34}$, $U_5$, $U_6$ is naturally a smooth cubic surface $X \hookrightarrow \PP^3_{F}$ over $F$.
    By the same argument as in \cite[Section 3.4]{Lorenzo-Pirisi}, we can check that
    \[
    E_X = E^4_1 \times E^4_2 \times K \times F^7.
    \]
    We note that $\alpha_{\mathrm{tot}}(F)= 1$.
    We have $\alpha_{\mathrm{tot}}(E_1)= 1 + \{ t  \}$ and $\alpha_{\mathrm{tot}}(E_2)= 1 + \{ u  \}$.
    We have $\alpha_1(K) = 0$ and $\alpha_2(K) = \{ t, u \} + \{ -1, tu \}$ by the same computation as in \cite[Example 3.11]{Lorenzo-Pirisi}.
    Using the multiplicativity of the total Galois-Stiefel-Whitney classes, we obtain $\alpha_2(E_X) = \alpha_2(K) = \{ t, u \} + \{ -1, tu \}$.

    We have $\Br(F)[2] \simeq \ZZ/2\ZZ $ by local class field theory.
    We further identify $\ZZ/2\ZZ$ with $\mu_2(F)= \{-1, 1 \}$.
    Under these identifications, $\{ t, u \}$ agrees with the Hilbert symbol $(t, u)$ on $F$.
    By \cite[Chapter XIV, Section 3, Corollary]{SerreLocalfields}, we then have $\{ t, u \} = u^{(q-1)/2}$.
    Similarly, we get $\{ -1, tu \} = (-1)^{(q-1)/2}$.
    Since $\sqrt{u}$ is not contained in $\Fq$, we see that $\{ t, u \} = u^{(q-1)/2} =-1$.
    It follows that
    \[
    \alpha_2(E_X)= (-1)^{(q-1)/2+1}.
    \]
    Therefore, for a smooth quartic curve $C \subset \PP^2_{F}$ over $F$ obtained in the same way as in Example \ref{Example: cubic surface and the description of gamma}, we have
    \[
    \gamma(C)=\alpha_2(E_X)= (-1)^{(q-1)/2+1}.
    \]
\end{examp}

\begin{examp}\label{Example: nontrivial gamma, q-1/2 is odd}
    We set
    \begin{align*}
        &u_1:= 1 + \sqrt{t}, &u_2 &:= 1 - \sqrt{t}, \\
        &u_3:= 1 + \sqrt{u}, &u_4 &:= 1 - \sqrt{u}, \\
        &u_5:= 1 + t + 2\sqrt{u}, &u_6 &:= 1 + t - 2\sqrt{u}.
    \end{align*}
    Since $u_1$ is conjugate to $u_2$, they come from a closed point
    $U_{12} \colon \Spec (E_1) \hookrightarrow \PP^2_{F}$.
    Similarly, $u_3$ and $u_4$ come from a closed point
    $U_{34} \colon \Spec (E_2) \hookrightarrow \PP^2_{F}$, and
    $u_5$ and $u_6$ come from a closed point
    $U_{56} \colon \Spec (E_2) \hookrightarrow \PP^2_{F}$.
    The $6$ points $U_i:=[u^{-2}_i, u^{-3}_i, 1] \in \PP^2_{K}$ are in a general position.
    It thus follows that the blow-up $X$ of $\PP^2_{F}$ along the closed points $U_{12}$, $U_{34}$, $U_{56}$ is naturally a smooth cubic surface $X \hookrightarrow \PP^3_{F}$ over $F$.
    By the same argument as in \cite[Section 3.4]{Lorenzo-Pirisi}, we can check that
    \[
    E_X = E^2_1 \times E^6_2 \times K^2 \times F^3.
    \]
    We note that $\alpha_{\mathrm{tot}}(F)= 1$.
    We have $\alpha_{\mathrm{tot}}(E_1)= 1 + \{ t  \}$ and $\alpha_{\mathrm{tot}}(E_2)= 1 + \{ u  \}$.
    It follows that
    $
    \alpha_{\mathrm{tot}}(E^2_1) = 1 + \{ t, t \}
    $
    and
    \[
    \alpha_{\mathrm{tot}}(E^6_2) = 1 + \{ u, u \} + \beta
    \]
    where $\beta$ is some element of degree $\geq 3$.
    We have $\alpha_1(K) = 0$ and $\alpha_2(K) = \{ t, u \} + \{ -1, tu \}$ by the same computation as in \cite[Example 3.11]{Lorenzo-Pirisi}.
    In particular, we obtain
    \[
    \alpha_{\mathrm{tot}}(K^2) = 1 + \beta'
    \]
    for some element $\beta'$ of degree $\geq 3$.
    Using the multiplicativity of the total Galois-Stiefel-Whitney classes, we obtain
    \[
    \alpha_2(E_X) = \{ t, t \} + \{ u, u \}.
    \]
    As in the previous example,
    by \cite[Chapter XIV, Section 3, Corollary]{SerreLocalfields}, we have $\{ t, t \} = (-1)^{(q-1)/2}$ and $\{ u, u \} = 1$.
    It follows that
    \[
    \alpha_2(E_X)= (-1)^{(q-1)/2}.
    \]
    Therefore, for a smooth quartic curve $C \subset \PP^2_{F}$ over $F$ obtained in the same way as in Example \ref{Example: cubic surface and the description of gamma}, we have
    \[
    \gamma(C)=\alpha_2(E_X)= (-1)^{(q-1)/2}.
    \]
\end{examp}

Our next task is to show that
smooth quartic curves $C$ over $F$ from the previous examples can be taken to have stable reduction over $\mathcal{O}_F=\Fq[[t]]$ (without enlarging $F$).
We let $u_1, \dotsc, u_6$ be as in Example \ref{Example: nontrivial gamma, q-1/2 is even} when $(q-1)/2$ is even, and let $u_1, \dotsc, u_6$ be as in Example \ref{Example: nontrivial gamma, q-1/2 is odd} when $(q-1)/2$ is odd.
We keep the notation of the previous examples.
By the result of Shioda \cite[Theorem 14]{Shioda}, we know the defining equation of $X \hookrightarrow \PP^3_{F}$.
We shall recall his result.

\begin{prop}[{\cite[Theorem 14]{Shioda}}]\label{Proposition:Shioda defining equation}
    We set
\[
c_1 := -(u_1+u_2+u_3+u_4+u_5+u_6)=-6-2t.
\]
Let $\varepsilon_m$ be the $m$-th elementary symmetric function of the $27$ elements in $K$
\[
a_i:=c_1/3 - u_i, \quad a'_i:=-2c_1/3 - u_i, \quad a''_{ij}:=c_1/3+u_i+u_j \, \, (i < j).
\]
We set
\begin{align*}
        p_2&:= \varepsilon_2/12, \\
        p_1&:= \varepsilon_5/48, \\
        q_2&:= (\varepsilon_6-168p^3_2)/96, \\
        p_0&:= (\varepsilon_8-294 p^4_2-528 p_2 q_2)/480, \\
        q_1&:= (\varepsilon_9 - 1008 p_1 p^2_2)/1344, \\
        q_0&:= (\varepsilon_{12}-608 p^2_1 p_2-4768 p_0 p^2_2-252 p^6_2-1200 p^3_2 q_2+1248 q^2_2)/17280,
\end{align*}
which make sense since $p \geq 11$.
(These elements belong to $F$, and in fact belong to $\mathcal{O}_F$.)
Then, the smooth cubic surface $X \hookrightarrow \PP^3_{F}= \Proj F[X, Y, Z, W]$
is defined by the equation
\[
f_0(X, Y, W)Z^2 + f_1(X, Y, W)Z + f_2(X, Y, W)=0
\]
where
\begin{align*}
        f_0(X, Y, W)&:= p_2 X - 2Y + q_2 W, \\
        f_1(X, Y, W)&:= p_1 XW + q_1 W^2, \\
        f_2(X, Y, W)&:= X^3 +p_0 XW^2 + q_0 W^3 - Y^2W.
\end{align*}
\end{prop}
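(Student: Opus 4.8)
The plan is to obtain the explicit equation directly from Shioda's theorem \cite[Theorem 14]{Shioda}, applied to the configuration of six points at hand, and then to check separately the two parenthetical claims in the statement: that the scalars $p_2,p_1,q_2,p_0,q_1,q_0$ are well defined (the denominators in the formulas being invertible) and that they, together with the coefficients of $f_0,f_1,f_2$, lie in $\mathcal{O}_F$.

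First I would invoke \cite[Theorem 14]{Shioda}, which gives, for six points in general position on the rational cubic curve $s\mapsto[s^{-2},s^{-3},1]$ in $\PP^2$ over a field of characteristic $0$ or $\geq 11$ with affine parameters $u_1,\dots,u_6$, the defining equation $f_0Z^2+f_1Z+f_2=0$ of the anticanonically embedded blow-up cubic surface in $\PP^3=\Proj k[X,Y,Z,W]$, with all coefficients expressed through the elementary symmetric functions $\varepsilon_m$ of the $27$ quantities $a_i,a'_i,a''_{ij}$ exactly by the displayed formulas. The six points $U_1,\dots,U_6\in\PP^2_K$ have already been checked in Example \ref{Example: nontrivial gamma, q-1/2 is even} (resp.\ Example \ref{Example: nontrivial gamma, q-1/2 is odd}) to be in general position, where $K=F(\sqrt t,\sqrt u)$ is the field of definition of the $u_i$; so Shioda's formula applies over $K$ and yields a smooth cubic surface over $K$ with the asserted equation.

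Next I would descend from $K$ to $F$. The set $\{u_1,\dots,u_6\}$ is stable under $\Gal(K/F)\simeq(\ZZ/2\ZZ)^2$: the automorphism $\sqrt t\mapsto-\sqrt t$ interchanges $u_1,u_2$ and fixes the others, while $\sqrt u\mapsto-\sqrt u$ interchanges $u_3,u_4$ (and, in the second example, also $u_5,u_6$) and fixes the others. Since $c_1=-6-2t$ lies in $F$ and Shioda's construction is Galois-equivariant, the whole $27$-element set $\{a_i,a'_i,a''_{ij}\}$ is Galois-stable, hence each $\varepsilon_m$ lies in $F$; therefore the cubic surface over $K$ is the base change of the cubic surface $X\hookrightarrow\PP^3_F$ obtained by blowing up $\PP^2_F$ along the closed points $U_{12},U_{34},U_5,U_6$ (resp.\ $U_{12},U_{34},U_{56}$), and this $F$-form $X$ is smooth and cut out by the same equation.

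It remains to check the integrality and well-definedness claims, which is where $p\geq 11$ enters. Each $u_i$ in both examples is integral over $\Fq[[t]]$ and so lies in $\mathcal{O}_K$; since $3$ is a unit (as $p\geq 11$), the element $c_1$ and all $27$ quantities $a_i,a'_i,a''_{ij}$ lie in $\mathcal{O}_K$, and combined with the Galois-invariance just established, $\varepsilon_m\in\mathcal{O}_K\cap F=\mathcal{O}_F$. The denominators $12,48,96,480,1344,17280$ appearing in the formulas for $p_2,p_1,q_2,p_0,q_1,q_0$ have prime factors only among $\{2,3,5,7\}$, all units of $\mathcal{O}_F=\Fq[[t]]$ because $p\geq 11$; so, computing in the displayed order, each numerator is an integer combination of products of the $\varepsilon_m$ and previously obtained quantities — hence in $\mathcal{O}_F$ — and dividing by a unit keeps the result in $\mathcal{O}_F$. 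Thus $p_2,p_1,q_2,p_0,q_1,q_0\in\mathcal{O}_F$, and likewise the coefficients of $f_0,f_1,f_2$ are in $\mathcal{O}_F$. The only substantive ingredient is Shioda's theorem, which I would cite; the rest is bookkeeping together with a Galois descent, so I foresee no real obstacle — the one point deserving a little care is matching Shioda's normalization of the anticanonical embedding with the blow-up construction of the Examples, but this is precisely what \cite[Theorem 14]{Shioda} supplies.
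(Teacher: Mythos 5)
Your proof is correct and follows essentially the same route as the paper, which simply cites \cite[Theorem 14]{Shioda} together with the observation (made before \cite[Lemma 13]{Shioda}) that the formula remains valid in characteristic $p\geq 11$. The Galois-descent and integrality bookkeeping you spell out is left implicit in the paper but is accurate, so there is nothing to add.
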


\begin{proof}
See \cite[Theorem 14]{Shioda}.
    Although the result is stated in characteristic $0$ in \cite{Shioda}, the same result holds in characteristic $p \geq 11$, as claimed before \cite[Lemma 13]{Shioda}.
\end{proof}

One can check that none of the $27$ elements $a_i, a'_i, a''_{ij}$ are zero.
It follows from \cite[Theorem 15]{Shioda} that the point
$P:=[0, 0, 1, 0] \in X$ is not contained in any line in $X_{\overline{F}}$.
Then, as explained in Example \ref{Example: cubic surface and the description of gamma},
the morphism
\[
\mathrm{Bl}_P (X) \to \PP^2_{F}
\]
induced by the projection onto $\PP^2_{F} = (Z=0) \subset \PP^3_{F}$
expresses $\mathrm{Bl}_P (X)$ as a double covering of $\PP^2_{F}$ whose ramification locus $C \subset \PP^2_{F}$ is a smooth quartic curve.

\begin{lem}\label{Lemma:defining equation of ramified locus}
The curve
$C$ is defined by the equation
\[
g(X, Y, W):=f_1(X, Y, W)^2-4f_0(X, Y, W)f_2(X, Y, W)=0.
\]
\end{lem}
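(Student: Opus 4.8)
The plan is to make the double cover $\mathrm{Bl}_P(X)\to\PP^2_F$ completely explicit over the locus where $f_0$ does not vanish, read off its branch divisor by completing the square, and then identify it globally with $C$ by a degree count. The key input will be the precise degrees of $f_0,f_1,f_2$ in Shioda's presentation.

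First recall that $P=[0,0,1,0]$ lies on $X$ (all of $f_0,f_1,f_2$ vanish at $X=Y=W=0$) and, as noted before the statement, on no line of $X_{\overline F}$. Fix $q=[x:y:w]\in\PP^2_F=(Z=0)$ and parametrize the line $\ell_q\subset\PP^3_F$ joining $P$ to $q$ as $[\lambda x:\lambda y:\mu:\lambda w]$ with $[\lambda:\mu]\in\PP^1$. Substituting into the defining cubic $f_0Z^2+f_1Z+f_2=0$ of $X$ and using that $f_0,f_1,f_2$ are homogeneous of respective degrees $1,2,3$ in $X,Y,W$, the equation becomes
\[
\lambda\bigl(f_2(q)\,\lambda^{2}+f_1(q)\,\lambda\mu+f_0(q)\,\mu^{2}\bigr)=0 .
\]
The factor $\lambda$ corresponds to the point $P$, and the residual intersection $\ell_q\cap X$ is the binary quadric $f_2(q)\lambda^{2}+f_1(q)\lambda\mu+f_0(q)\mu^{2}=0$. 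Put $V:=\PP^2_F\setminus V(f_0)$. On the residual subscheme over $V$ one has $\lambda\neq 0$, so, setting $z=\mu/\lambda$, the preimage of $V$ under $\pi\colon\mathrm{Bl}_P(X)\to\PP^2_F$ is the finite locally free degree $2$ cover
\[
\pi^{-1}(V)\;\cong\;\Spec_{\mathcal O_V}\bigl(\mathcal O_V[z]/(f_0z^{2}+f_1z+f_2)\bigr),
\]
$f_0$ being invertible on $V$. (One checks that the exceptional curve of $\mathrm{Bl}_P(X)$ maps isomorphically onto the line $V(f_0)$, which is the trace on $(Z=0)$ of the tangent plane $T_PX$; this is why $E\cap\pi^{-1}(V)=\emptyset$ and the displayed identification is legitimate.)

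Since $\operatorname{char}F\geq 11$, we may complete the square: over $V$, where $2f_0$ is invertible, the substitution $\nu=2f_0z+f_1$ gives
\[
\pi^{-1}(V)\;\cong\;\Spec_{\mathcal O_V}\bigl(\mathcal O_V[\nu]/(\nu^{2}-g)\bigr),\qquad g=f_1^{2}-4f_0f_2 ,
\]
so the branch locus of $\pi$ over $V$ is exactly $V(g)\cap V$ — equivalently, $\pi$ ramifies over $q$ precisely when the binary quadric above acquires a double root, i.e.\ when its discriminant $g(q)=f_1(q)^2-4f_0(q)f_2(q)$ vanishes. As recalled before the statement, the branch curve of $\pi$ is $C$, so $C\cap V=V(g)\cap V$. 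Finally $C$ is a smooth plane quartic, hence reduced and (being geometrically connected of genus $3$) irreducible; since a line cannot be a component of a smooth plane quartic, $C\not\subseteq V(f_0)$, so $C\cap V$ is dense in $C$ and $C=\overline{C\cap V}\subseteq V(g)$. Both $C$ and $V(g)$ are quartics in $\PP^2_F$, so comparing degrees yields $C=V(g)$, as claimed.

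The only genuinely delicate point is the behaviour over the line $V(f_0)$: there the presentation $f_0z^{2}+f_1z+f_2$ degenerates and one sheet of the cover runs into $P$ (this is precisely why one blows up $P$), so a naive discriminant computation on all of $\PP^2_F$ is not available. The plan above sidesteps this by working over the dense open $V$ and recovering the equation of $C$ everywhere via the closure/degree argument; alternatively one could check by hand that $V(g)$ correctly records the branch points lying on $V(f_0)$, namely the finitely many points of $V(f_0)\cap V(f_1)$.
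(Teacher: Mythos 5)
Your argument is correct and follows essentially the same route as the paper: both read off $C$ as the discriminant locus of the residual binary quadric $f_2\lambda^2+f_1\lambda\mu+f_0\mu^2$ cut out by the projection from $P$. The only difference is in handling the line $f_0=0$ — the paper does a direct fiber-counting case analysis there, while you work over its complement and close up using the irreducibility and degree of $C$; both are fine.
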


\begin{proof}
This is a well-known fact, but let us sketch the proof for the convenience of the reader.
We first note that the morphism
$\mathrm{Bl}_P (X) \to \PP^2_{F}$ maps the exceptional divisor isomorphically onto the line in $\PP^2_{F}$ defined by $f_0=0$.
The locus in $\PP^2_{F}$ where the geometric fibers of the projection $X - \{ P \} \to \PP^2_{F}$ are empty is the locus where $f_0=f_1=0$.
(The locus where $f_0=f_1=f_2=0$ is empty by the condition that $P \in X$ is not contained in any line in $X_{\overline{F}}$.)
The locus in $\PP^2_{F}$ where the geometric fibers of $X - \{ P \} \to \PP^2_{F}$ have exactly one point is the union of 
\[
\{ \, [x, y, z] \in \PP^2_{F} \, \vert \, f_0(x, y, z)=0, \, f_1(x, y, z) \neq 0 \, \}
\]
and
\[
\{ \, [x, y, z] \in \PP^2_{F} \, \vert \, f_0(x, y, z) \neq 0, \, g(x, y, z) = 0 \, \}.
\]
It follows that the ramification locus $C \subset \PP^2_{F}$ (or equivalently, the locus where the geometric fibers of
$\mathrm{Bl}_P (X) \to \PP^2_{F}$
have exactly one point) is defined by $g=0$.
\end{proof}

The polynomial $g(X, Y, W)$ lives in $\mathcal{O}_F[X, Y, W]$.
We shall prove the following claim.

\begin{lem}\label{lem: ramified locus has stable reduction}
    The closed subscheme
$\mathscr{C} \subset \PP^2_{\mathcal{O}_F}$
defined by $g(X, Y, W)=0$ is a stable curve over $\mathcal{O}_F$.
\end{lem}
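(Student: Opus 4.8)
The plan is to verify the three conditions defining a stable curve over $\mathcal{O}_F=\Fq[[t]]$: that $\mathscr{C}\to\Spec\mathcal{O}_F$ is proper and flat, and that both of its geometric fibers are stable curves of genus $3$. Properness is immediate since $\mathscr{C}$ is a closed subscheme of $\PP^2_{\mathcal{O}_F}$. The first step is to check that the reduction $g_0:=(g\bmod t)\in\Fq[X,Y,W]$ is a nonzero homogeneous polynomial of degree $4$; granting this, $\mathscr{C}\to\Spec\mathcal{O}_F$ is a family of plane quartics, hence flat (all fibers share the Hilbert polynomial of a degree-$4$ plane curve, whose arithmetic genus is $3$), and each fiber is connected because $H^0$ of a plane quartic over a field $k$ is $k$ (from the exact sequence $0\to\mathcal{O}_{\PP^2}(-4)\to\mathcal{O}_{\PP^2}\to\mathcal{O}_{\mathscr{C}_0}\to 0$). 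The generic fiber $\mathscr{C}_\eta$ equals the smooth quartic curve $C$ produced in Example \ref{Example: cubic surface and the description of gamma}, so it is a smooth connected projective curve of genus $3$ and thus stable. It remains to prove that the closed fiber $\mathscr{C}_0=\mathscr{C}\times_{\mathcal{O}_F}\Fq$ is a stable curve of genus $3$.

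To handle $\mathscr{C}_0$, I would compute $g_0$ explicitly by specializing Shioda's formulas at $t=0$. There one has $c_1=-6$ and $u_1=u_2=1$, while $u_3=1+\sqrt u$, $u_4=1-\sqrt u$ and $u_5,u_6$ take the evident constant values from Example \ref{Example: nontrivial gamma, q-1/2 is even} or Example \ref{Example: nontrivial gamma, q-1/2 is odd}; so the $27$ elements $a_i,a'_i,a''_{ij}$ become explicit elements of $\Fq(\sqrt u)$, their elementary symmetric functions $\varepsilon_m$ specialize to elements of $\Fq$, and the formulas in Proposition \ref{Proposition:Shioda defining equation} give the reductions $\overline{p}_2,\overline{p}_1,\overline{q}_2,\overline{p}_0,\overline{q}_0,\overline{q}_1\in\Fq$ (the denominators are units because $p\geq 11$). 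Writing $\overline{f}_i$ for the reductions of the $f_i$, Lemma \ref{Lemma:defining equation of ramified locus} gives $g_0=\overline{f}_1^{\,2}-4\,\overline{f}_0\,\overline{f}_2$.

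Next, I would determine the singular locus of $\mathscr{C}_0=\{g_0=0\}\subset\PP^2_{\overline{\Fq}}$ by solving $\partial_X g_0=\partial_Y g_0=\partial_W g_0=0$ (Euler's identity then forces $g_0=0$, since $p\neq 2$), check that $g_0$ is squarefree so that $\mathscr{C}_0$ is reduced, and confirm that at each singular point the local quadratic part of $g_0$ is a nondegenerate binary quadratic form, so that the point is an ordinary node and no cusp, tacnode, triple point, or worse occurs. Since $\mathscr{C}_0$ arises from the degeneration in which only the pair $u_1,u_2$ collides while the six-point configuration otherwise stays general, one expects $\mathscr{C}_0$ to be an irreducible plane quartic with a single node (of geometric genus $2$); in any event the computation will exhibit it as a reduced, connected, at worst nodal plane quartic. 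Finally, such a curve is automatically a stable curve of genus $3$: its arithmetic genus is $3\geq 2$, and the only remaining condition for stability of a connected nodal curve of genus $\geq 2$ — that no smooth rational component meet the rest of the curve in at most two points — holds for plane curves by B\'ezout, since a line component meets the residual cubic in three points and a conic component meets a residual conic in four points. Combining the paragraphs shows that $\mathscr{C}\to\Spec\mathcal{O}_F$ is a stable curve, as claimed.

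The main obstacle is the explicit computation in the second and third steps: pushing Shioda's rather lengthy formulas through the specialization $t=0$ and then analyzing the singularities of the resulting quartic. The computation is finite and mechanical, and it makes sense precisely because every denominator appearing is a unit for $p\geq 11$; but it must be carried out carefully, since the appearance of a cusp or a tacnode in $\mathscr{C}_0$ — rather than only ordinary nodes — would mean that $\mathscr{C}$ fails to be a stable curve over $\mathcal{O}_F$ without a further ramified base change, which would defeat the purpose of the construction.
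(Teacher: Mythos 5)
Your strategy coincides with the paper's: reduce everything to an explicit computation of the reduction $g_0$ of Shioda's quartic at $t=0$, verify that the special fiber is a reduced, connected, nodal plane quartic, and then invoke the elementary fact (via B\'ezout) that any such curve is a stable curve of genus $3$; your flatness and connectedness arguments are also fine. The one substantive point to correct is your prediction of the shape of the special fiber: it is \emph{not} an irreducible one-nodal quartic. The specialization at $t=0$ gives $\overline{p}_1=\overline{q}_1=0$, hence $\overline{f}_1=0$ and $g_0=-4\,\overline{f}_0\overline{f}_2$ factors as a line times a Weierstrass cubic; the special fiber is the union of that line and a \emph{nodal} cubic meeting it transversally in three distinct points. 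This factorization is exactly what makes the computation tractable: instead of a Jacobian analysis of an irreducible quartic, the paper checks (i) that the cubic $\overline{f}_2=0$ has vanishing discriminant but nonzero $c_4$-invariant (so it is nodal rather than cuspidal), and (ii) that the discriminant of the univariate cubic governing the line--cubic intersection is nonzero (so the three intersection points are distinct); both quantities turn out to be explicit nonzero expressions in $u$ precisely because $\sqrt{u}\notin\Fq$. Since you hedged with ``at worst nodal'' and your final stability criterion does cover reducible quartics with a line component, your plan still goes through, but carrying it out as written (expecting irreducibility) would have been confusing, and your ``squarefree'' check would in fact have revealed the factorization rather than confirmed irreducibility.
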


\begin{proof}
We first note that $g$ is an irreducible element of $\mathcal{O}_F[X, Y, W]$ since it is irreducible in $F[X, Y, W]$ and is not divisible by the uniformizer $t$.
It follows that $\mathscr{C}$ is irreducible, which in turn implies that it is flat over $\mathcal{O}_F$.

Let $\overline{a} \in \Fq$ denote the image of an element $a \in \mathcal{O}_F$.
(We also use this notation for polynomials.)
One can check that the polynomial
$\sum^{27}_{m=0} \overline{\varepsilon}_{27-m} S^{m} \in \Fq[S]$
is equal to
\begin{equation}\label{equation:even case}
    \begin{aligned}
    S^3(S^2-1)^2&(S^2-4)(S^2-9)^2(S^2-16)(S^2-u)^2 \times \\ &(S^4-(2+2u)S^2+(1-u)^2)(S^4-(18+2u)S^2+(9-u)^2)
    \end{aligned}
\end{equation}
if $(q-1)/2$ is even, and is equal to
\begin{equation}\label{equation:odd case}
    \begin{aligned}
    S^3(S^2-9)^2&(S^2-u)^3(S^2-4u)^2(S^2-9u) \times \\ &(S^4-(18+2u)S^2+(9-u)^2)(S^4-(18+8u)S^2+(9-4u)^2)
    \end{aligned}
\end{equation}
if $(q-1)/2$ is odd.
It follows that $\overline{p}_1=\overline{q}_1=0$,
and the special fiber of $\mathscr{C}$ is defined by
\[
\overline{f_0}\overline{f_2}=(\overline{p}_2 X - 2Y + \overline{q}_2 W)(X^3 +\overline{p}_0 XW^2 + \overline{q}_0 W^3 - Y^2W)=0.
\]
The special fiber has two irreducible components $\overline{\mathscr{C}}_1$ and $\overline{\mathscr{C}}_2$: the first one $\overline{\mathscr{C}}_1$ is the line defined by
$\overline{f_0}=0$
and the second one $\overline{\mathscr{C}}_2$ is a cubic curve of Weierstrass form, defined by $\overline{f_2}=0$.
We can check that
the discriminant of $\overline{\mathscr{C}}_2$
is zero and that
the $c_4$ invariant of $\overline{\mathscr{C}}_2$
is
\begin{equation}\label{equation:c4 invariant}
    -48\overline{p}_0=
\left\{
  \begin{aligned}
  & (u-4)^2(u-16)^2/16 &  (& \text{if $(q-1)/2$ is even})\\
  & 81(u-1)^2(u-9)^2/16 &  (& \text{if $(q-1)/2$ is odd}),
  \end{aligned}
\right.
\end{equation}
which is nonzero since $\sqrt{u}$ is not contained in $\Fq$.
Therefore $\overline{\mathscr{C}}_2$ is a nodal curve; see \cite[Chapter III, Proposition 1.4]{Silverman}.
(The $c_4$ invariant of a cubic curve of Weierstrass form is defined at the beginning of \cite[Section III.1]{Silverman}.)
In order to show that $\mathscr{C}$ is a stable curve, it suffices to show that
$(\overline{\mathscr{C}}_1)_{\Fbar}$
intersects
with
$(\overline{\mathscr{C}}_2)_{\Fbar}$
in exactly three points.
Since they do not intersect each other in $W=0$, it is enough to show that
the discriminant of
the polynomial
\begin{equation}\label{equation:intersection}
x^3 + \overline{p}_0 x +  \overline{q}_0 -(\frac{\overline{p}_2}{2} x + \frac{\overline{q}_2}{2})^2=x^3 -\frac{\overline{p}^2_2}{4} x^2 + (\overline{p}_0 - \frac{\overline{p}_2\overline{q}_2}{2})x +  \overline{q}_0 -\frac{\overline{q}^2_2}{4}
\end{equation}
is nonzero.
The discriminant is equal to
\begin{equation}\label{equation:discriminant}
    \left\{
  \begin{aligned}
  & 81u^2(u-1)^2(u-9)^2/64 &  (& \text{if $(q-1)/2$ is even})\\
  & 729u^6(u-9)^2(u-9/4)^2/16 &  (& \text{if $(q-1)/2$ is odd}),
  \end{aligned}
\right.
\end{equation}
which is nonzero since $\sqrt{u}$ is not contained in $\Fq$.
\end{proof}

\begin{remark}
\begin{enumerate}
    \item One can compute the discriminant of $\overline{\mathscr{C}}_2$, the $c_4$ invariant of $\overline{\mathscr{C}}_2$, and the discriminant
    of (\ref{equation:intersection}) by using a computer algebra software.
    More precisely, we consider the polynomial
    $Q(u, S)$ with coefficients in $R:=\ZZ[1/(2 \cdot 3 \cdot 5 \cdot 7)]$ defined as in (\ref{equation:even case}) or (\ref{equation:odd case}) (depending on whether $(q-1)/2$ is even or odd).
    Here we regard $u$ as a variable.
    Let $\varepsilon_m \in R[u]$ be the coefficient of $S^{27-m}$ in $Q(u, S)$, and we define $p_2, p_1, q_2, p_0, q_1, q_0 \in R[u]$ in the same way as above.
    Then one can check that in the ring $R[u]$, the discriminant of the elliptic curve over $R[u]$ defined by
    $X^3 +p_0 XW^2 + q_0 W^3 - Y^2W$ is zero, and its $c_4$ invariant has the form of (\ref{equation:c4 invariant}).
    Moreover one can show that
    the discriminant of the cubic polynomial with coefficients in $R[u]$ defined in the same way as (\ref{equation:intersection}) has the form of (\ref{equation:discriminant}).
    
    \item We can more easily prove the following weaker statement, which is in fact enough for our purpose (i.e.\ the proof of Theorem \ref{Theorem: g=3 in characteristic 0}): For a large enough $q$, there exists an element $u \in \Fq^\times$ such that $\sqrt{u}$ is not contained in $\Fq$
    and the closed subscheme
$\mathscr{C} \subset \PP^2_{\mathcal{O}_F}$
as in Lemma \ref{lem: ramified locus has stable reduction} is a stable curve over $\mathcal{O}_F$.
To see this, we remark that in fact the vanishing of the discriminant of $\overline{\mathscr{C}}_2$
is not necessary
once we know that the $c_4$ invariant is nonzero
(since this implies that $\overline{\mathscr{C}}_2$ is a smooth curve or a nodal curve).
One can check that
the $c_4$ invariant and the discriminant of (\ref{equation:intersection})
have nonzero polynomial expressions in $u$ with coefficients in $\Fq$.
Since the degrees of these polynomials are bounded uniformly for $q$,
it follows that for a large enough $q$,
there exists an element $u \in \Fq^\times$ such that $\sqrt{u}$ is not contained in $\Fq$ and $u$ is not a root of any of these polynomials.
This proves our claim.
\end{enumerate}
\end{remark}

In total, we have shown the following statement:

\begin{lem}\label{Lemma: stable curve with nontrivial Brauer class}
There exists a smooth quartic curve $C \subset \PP^2_{F}$ over $F=\Fq((t))$ with the following properties:
    \begin{itemize}
        \item $\gamma(C) \in \Br(F)[2]$ is nonzero.
        \item $C$ has stable reduction over $\mathcal{O}_F=\Fq[[t]]$.
    \end{itemize}
\end{lem}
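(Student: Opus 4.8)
The plan is simply to assemble the preceding examples and lemmas; no genuinely new argument is needed beyond a case split on the parity of $(q-1)/2$.

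First I would handle the case where $(q-1)/2$ is even. Take $u_1,\dots,u_6$ as in Example \ref{Example: nontrivial gamma, q-1/2 is even}, form the smooth cubic surface $X\hookrightarrow\PP^3_F$ obtained by blowing up $\PP^2_F$ at $U_{12},U_{34},U_5,U_6$, set $P=[0,0,1,0]$, and let $C\subset\PP^2_F$ be the ramification quartic of the double cover $\mathrm{Bl}_P(X)\to\PP^2_F$ exactly as in Example \ref{Example: cubic surface and the description of gamma}. By that example together with the Galois--Stiefel--Whitney and Hilbert symbol computations carried out in Example \ref{Example: nontrivial gamma, q-1/2 is even}, one gets $\gamma(C)=\alpha_2(E_X)=(-1)^{(q-1)/2+1}$, which under the identifications $\Br(F)[2]\simeq\ZZ/2\ZZ\simeq\mu_2(F)$ is the nonzero class precisely because $(q-1)/2$ is even. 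When $(q-1)/2$ is odd I would run the identical argument with the $u_i$ of Example \ref{Example: nontrivial gamma, q-1/2 is odd}, where $\gamma(C)=\alpha_2(E_X)=(-1)^{(q-1)/2}$ is again the nonzero class. This establishes the first bullet.

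For the second bullet, the key observation is that with these explicit $u_i$ everything is already defined over $\mathcal{O}_F$: Proposition \ref{Proposition:Shioda defining equation} exhibits $X\hookrightarrow\PP^3_F$ by an equation $f_0 Z^2+f_1 Z+f_2=0$ with coefficients in $\mathcal{O}_F$; the point $P=[0,0,1,0]$ lies on no line of $X_{\overline{F}}$ by \cite[Theorem 15]{Shioda}, since none of the $27$ values $a_i,a'_i,a''_{ij}$ vanish; and Lemma \ref{Lemma:defining equation of ramified locus} identifies $C$ with the plane quartic $g=f_1^2-4f_0f_2=0$, again with $\mathcal{O}_F$-coefficients. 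Hence the closed subscheme $\mathscr{C}\subset\PP^2_{\mathcal{O}_F}$ cut out by $g$ is an $\mathcal{O}_F$-model of $C$, and Lemma \ref{lem: ramified locus has stable reduction} asserts that $\mathscr{C}$ is a stable curve over $\mathcal{O}_F$; since $C=\mathscr{C}_\eta$, the curve $C$ has stable reduction over $\mathcal{O}_F$. Combining the two bullets finishes the proof. I do not expect any real obstacle at this stage: the substantive content sits in the already-established Lemma \ref{lem: ramified locus has stable reduction}, namely the discriminant and $c_4$-invariant computations (\ref{equation:c4 invariant}) and (\ref{equation:discriminant}) that show the special fiber is a line meeting a nodal Weierstrass cubic transversally in exactly three points; what remains is only the bookkeeping matching the parity of $(q-1)/2$ to the nonvanishing of $\alpha_2(E_X)$.
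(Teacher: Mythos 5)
Your proposal is correct and follows exactly the route of the paper, whose proof of this lemma is precisely the assembly you describe: the parity case split selects the relevant example computing $\gamma(C)=\alpha_2(E_X)\neq 0$, and Lemma \ref{lem: ramified locus has stable reduction} applied to the $\mathcal{O}_F$-model $\mathscr{C}=\{g=0\}$ coming from Shioda's equation with $P=[0,0,1,0]$ gives the stable reduction. No discrepancies.
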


\begin{proof}
    This follows from Example \ref{Example: nontrivial gamma, q-1/2 is even}, Example \ref{Example: nontrivial gamma, q-1/2 is odd}, and Lemma \ref{lem: ramified locus has stable reduction}.
\end{proof}

\begin{remark}
    We found smooth quartic curves as in Lemma \ref{Lemma: stable curve with nontrivial Brauer class} through trial and error.
    It would be interesting to give a more conceptual way to construct such smooth quartic curves.
\end{remark}

\begin{cor}\label{Corollary: g=3 in positive characteristic}
    Let $p$ be a prime such that $p \geq 11$.
    \begin{enumerate}
        \item $\Br(\overline{\mathcal{M}}_{3, \Fq})[2]=0$ for any finite field $\Fq$ of characteristic $p$.
        \item $\Br(\overline{\mathcal{M}}_{3, \Fbar})[2]=0$.
    \end{enumerate}
\end{cor}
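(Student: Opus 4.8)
The plan is to deduce both assertions from Lemma~\ref{Lemma: stable curve with nontrivial Brauer class}, using that a complete discrete valuation ring with finite residue field has trivial Brauer group. For~(1), I would first reduce to showing that the generator $\gamma$ does not lift to $\overline{\mathcal{M}}_{3,\Fq}$. Since $\overline{\mathcal{M}}_{3,\Fq}$ is integral, noetherian and regular and $\mathcal{M}_{3,\Fq}\subset\overline{\mathcal{M}}_{3,\Fq}$ is dense open, the restriction map $\Br(\overline{\mathcal{M}}_{3,\Fq})\to\Br(\mathcal{M}_{3,\Fq})$ is injective by \cite[Proposition 2.5 (iv)]{AntieauMeierBrauerElliptic}. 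Moreover, the isomorphism $\Br(\mathcal{M}_{3,\Fq})\simeq\Br(\Fq)\oplus\ZZ/2\ZZ\oplus B_p$ recalled above shows that $\Br(\mathcal{M}_{3,\Fq})[2]=(\ZZ/2\ZZ)\cdot\gamma$, because $\Br(\Fq)=0$ by Wedderburn's theorem and $B_p$ is $p$-primary torsion with $p$ odd. Hence $\Br(\overline{\mathcal{M}}_{3,\Fq})[2]$ is either $0$ or $(\ZZ/2\ZZ)\cdot\gamma$, and it suffices to exclude the latter, i.e.\ to show that there is no class $\widetilde{\gamma}\in\Br(\overline{\mathcal{M}}_{3,\Fq})$ with $\widetilde{\gamma}|_{\mathcal{M}_{3,\Fq}}=\gamma$.

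Suppose such a $\widetilde{\gamma}$ existed. Let $C\subset\PP^2_{F}$ over $F=\Fq((t))$ and its stable model $\mathscr{C}\subset\PP^2_{\mathcal{O}_F}$ over $\mathcal{O}_F=\Fq[[t]]$ be as in Lemma~\ref{Lemma: stable curve with nontrivial Brauer class}. The stable curve $\mathscr{C}$ defines a morphism $\varphi\colon\Spec(\mathcal{O}_F)\to\overline{\mathcal{M}}_{3,\Fq}$, and since the generic fibre $\mathscr{C}_\eta=C$ is smooth, the composite $\Spec(F)\hookrightarrow\Spec(\mathcal{O}_F)\xrightarrow{\varphi}\overline{\mathcal{M}}_{3,\Fq}$ factors through $\mathcal{M}_{3,\Fq}$ and is precisely the morphism classifying $C$. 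Pulling $\widetilde{\gamma}$ back through the resulting commutative square, the image of $\varphi^{*}\widetilde{\gamma}\in\Br(\mathcal{O}_F)$ under $\Br(\mathcal{O}_F)\to\Br(F)$ equals $\gamma(C)$. But $\Br(\mathcal{O}_F)=\Br(\Fq[[t]])=0$, so $\gamma(C)=0$, contradicting Lemma~\ref{Lemma: stable curve with nontrivial Brauer class}. This proves~(1). Note that every finite field $\Fq$ of characteristic $p\geq 11$ admits a non-square $u\in\Fq^{\times}$, so Lemma~\ref{Lemma: stable curve with nontrivial Brauer class} indeed applies to every such $\Fq$.

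For~(2), I would pass to the limit: writing $\Fbar$ as the filtered union of its finite subfields $\Fq$ of characteristic $p$, we have $\overline{\mathcal{M}}_{3,\Fbar}=\lim_q\overline{\mathcal{M}}_{3,\Fq}$ with affine transition morphisms. All of these stacks are noetherian and regular, so $\Br=H^{2}_{\et}(-,\Gm)$ for each of them, and continuity of étale cohomology (applied as in the proof of Lemma~\ref{lem: short exact sequence with H3}, working étale-locally through a presentation) gives
\[
\Br(\overline{\mathcal{M}}_{3,\Fbar})[2]=\colim_q\Br(\overline{\mathcal{M}}_{3,\Fq})[2],
\]
which vanishes by~(1).

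The genuinely delicate input — Shioda's explicit defining equations, the general-position check for the six points, and the stable-reduction computation — has already been carried out in Lemma~\ref{Lemma: stable curve with nontrivial Brauer class} and the lemmas preceding it; given that, the remaining argument is a short formal manipulation with Brauer groups, and the only point that requires a little care is the continuity statement for $\Gm$-cohomology of Deligne--Mumford stacks used in part~(2), which is routine.
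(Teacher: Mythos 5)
Your argument is correct and takes essentially the same route as the paper: part (1) is exactly the paper's obstruction argument (the stable curve over $\Fq[[t]]$ from Lemma \ref{Lemma: stable curve with nontrivial Brauer class} with $\gamma(C)\neq 0$, combined with $\Br(\Fq[[t]])=0$, shows $\gamma$ does not extend to $\overline{\mathcal{M}}_{3,\Fq}$), just spelled out in more detail. For part (2) the paper only writes that it ``follows from (1)''; your filtered-colimit/continuity argument over the finite subfields of $\Fbar$ is a standard and correct way to make that deduction precise.
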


\begin{proof}
    (1) By \cite{Lorenzo-Pirisi}, we have $\Br(\mathcal{M}_{3, \Fq})[2] \simeq \ZZ/2\ZZ$.
    Let $\gamma \in \Br(\mathcal{M}_{3, \Fq})[2]$ be the nonzero element.
    It suffices to show that $\gamma$ does not lie in the image of $\Br(\overline{\mathcal{M}}_{3, \Fq}) \hookrightarrow \Br(\mathcal{M}_{3, \Fq})$.
    This follows from Lemma \ref{Lemma: stable curve with nontrivial Brauer class} since $\Br(\mathbb{F}_{q}[[t]])=0$.

    (2) This follows from (1).
\end{proof}

We can now prove the desired statement:

\begin{thm}\label{Theorem: g=3 in characteristic 0}
    We have $\Br(\overline{\mathcal{M}}_{3, \overline{\QQ}})=0$.
\end{thm}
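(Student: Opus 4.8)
The plan is to reduce the vanishing in characteristic zero to the already-established vanishing $\Br(\overline{\mathcal{M}}_{3, \Fbar})[2]=0$ for $p\geq 11$ (Corollary \ref{Corollary: g=3 in positive characteristic}), via the specialization argument packaged in Lemma \ref{Lemma: vanishing of Brauer class and reduction}.

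First I would note that $\mathcal{M}_{3,\overline{\QQ}} \subset \overline{\mathcal{M}}_{3,\overline{\QQ}}$ is a dense open substack of a regular noetherian DM stack, so by \cite[Proposition 2.5 (iv)]{AntieauMeierBrauerElliptic} the restriction map $\Br(\overline{\mathcal{M}}_{3,\overline{\QQ}}) \to \Br(\mathcal{M}_{3,\overline{\QQ}})$ is injective. By the result of Di Lorenzo--Pirisi recalled above, $\Br(\mathcal{M}_{3,\overline{\QQ}}) \simeq \Br(\overline{\QQ}) \oplus \ZZ/2\ZZ = \ZZ/2\ZZ$ since $\overline{\QQ}$ is algebraically closed. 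Hence $\Br(\overline{\mathcal{M}}_{3,\overline{\QQ}})$ is a subgroup of $\ZZ/2\ZZ$; in particular it is killed by $2$, so it suffices to prove $\Br(\overline{\mathcal{M}}_{3,\overline{\QQ}})[2]=0$.

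Next, I would fix the prime $p=11$ and apply Lemma \ref{Lemma: vanishing of Brauer class and reduction} to $X=\overline{\mathcal{M}}_{3,\ZZ}$. By Remark \ref{Remark:Constancy of Picard scheme of moduli space of stable curves} the stack $\overline{\mathcal{M}}_{3,\ZZ}$ satisfies condition (2) of Proposition \ref{Proposition: deducing vanishing over Z}. Given any nonzero $\alpha \in \Br(\overline{\mathcal{M}}_{3,\overline{\QQ}})[2]$, necessarily of order $n=2$, which is prime to $p=11$, its specialization $\alpha\vert_{\overline{\mathcal{M}}_{3,\overline{\mathbb{F}}_{11}}}$ lies in $\Br(\overline{\mathcal{M}}_{3,\overline{\mathbb{F}}_{11}})[2]$, which vanishes by Corollary \ref{Corollary: g=3 in positive characteristic} (2). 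Lemma \ref{Lemma: vanishing of Brauer class and reduction} then forces $\alpha=0$, a contradiction. Therefore $\Br(\overline{\mathcal{M}}_{3,\overline{\QQ}})[2]=0$, and hence $\Br(\overline{\mathcal{M}}_{3,\overline{\QQ}})=0$.

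The assembly itself is formal: the genuine content has already been spent in Corollary \ref{Corollary: g=3 in positive characteristic} (which rests on exhibiting a genus three stable curve over $\Fq[[t]]$ whose generic fibre carries the nontrivial class $\gamma$) and in the comparison of Picard and Brauer groups between the characteristic zero and characteristic $p$ fibres encoded in Lemma \ref{Lemma: vanishing of Brauer class and reduction} together with Remark \ref{Remark:Constancy of Picard scheme of moduli space of stable curves}. The only point that requires a little care here is verifying that all hypotheses of Lemma \ref{Lemma: vanishing of Brauer class and reduction} are met, namely that $\Br(\overline{\mathcal{M}}_{3,\overline{\QQ}})$ is $2$-torsion, so that a putative nonzero class has order $n=2$ with $p \nmid n$, and that condition (2) of Proposition \ref{Proposition: deducing vanishing over Z} holds for $\overline{\mathcal{M}}_{3,\ZZ}$; both are immediate from the inputs above. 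So there is no serious obstacle left at this stage, the substantive work having been done in the preceding subsection.
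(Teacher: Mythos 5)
Your proof is correct and follows essentially the same route as the paper: embed $\Br(\overline{\mathcal{M}}_{3,\overline{\QQ}})$ into $\Br(\mathcal{M}_{3,\overline{\QQ}})\simeq \ZZ/2\ZZ$ via Di Lorenzo--Pirisi, then use Remark \ref{Remark:Constancy of Picard scheme of moduli space of stable curves} and Lemma \ref{Lemma: vanishing of Brauer class and reduction} to specialize to characteristic $p\geq 11$, where Corollary \ref{Corollary: g=3 in positive characteristic} gives the vanishing. Your explicit verification of the hypotheses of Lemma \ref{Lemma: vanishing of Brauer class and reduction} is exactly the check the paper leaves implicit.
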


\begin{proof}
    By \cite{Lorenzo-Pirisi},
    we have $\Br(\mathcal{M}_{3, \overline{\QQ}}) \simeq \ZZ/2\ZZ$.
    Since $\Br(\overline{\mathcal{M}}_{3, \overline{\QQ}})$ is embedded into $\Br(\mathcal{M}_{3, \overline{\QQ}})$,
    by virtue of Remark \ref{Remark:Constancy of Picard scheme of moduli space of stable curves} and Lemma \ref{Lemma: vanishing of Brauer class and reduction}, it suffices to show that $\Br(\overline{\mathcal{M}}_{3, \Fbar})[2]=0$
    for some odd prime $p$.
    This follows from Corollary \ref{Corollary: g=3 in positive characteristic}.
\end{proof}
\subsection{Genus greater than or equal to four}

We conclude this paper by recalling the following well-known vanishing for the case $g \geq 4$, together with a rough explanation of where the condition that $g \geq 4$ is used.

\begin{thm}[Korkmaz-Stipsicz]\label{thm vanishing over C}
Assume that $g\geq 4$ and $n$ arbitrary.
Then
$
\Br(\overline{\mathcal{M}}_{g,n,\Qbar})=0.
$
\end{thm}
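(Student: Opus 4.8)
The plan is to reduce to a computation of the low-degree integral cohomology of the complex-analytic stack $\overline{\mathcal{M}}^{\an}_{g,n,\mathbb{C}}$, and then to invoke the known answer in the stable range $g\ge 4$. First I would apply Lemma \ref{lem: short exact sequence with H3} to $X=\overline{\mathcal{M}}_{g,n,\overline{\Q}}$, obtaining the exact sequence $0\to\Br(X)^{0}\to\Br(X)\to H^{3}(\overline{\mathcal{M}}^{\an}_{g,n,\mathbb{C}},\ZZ)_{\tors}\to 0$ with $\Br(X)^{0}\simeq(\Q/\ZZ)^{b_{2}-\rho}$. Since $\overline{\mathcal{M}}_{g,n}$ is proper and smooth over $\Spec(\ZZ)$, Theorem \ref{thm: FontaineAbrashkinDMStacks} and the remark following it give $H^{1}(\overline{\mathcal{M}}_{g,n,\mathbb{C}},\mathcal{O})=0$ --- so the identity component of $\underline{\Pic}$ is torsion and $\rho=\dim_{\Q}(\Pic(\overline{\mathcal{M}}_{g,n,\mathbb{C}})\otimes_{\ZZ}\Q)$ --- together with an isomorphism $c_{1}\colon\Pic(\overline{\mathcal{M}}_{g,n,\mathbb{C}})\otimes_{\ZZ}\Q\iso H^{2}(\overline{\mathcal{M}}_{g,n,\mathbb{C}},\Q)$; hence $b_{2}=\rho$ and $\Br(X)^{0}=0$. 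The same theorem gives $H^{3}(\overline{\mathcal{M}}_{g,n,\mathbb{C}},\Q)=0$, so that $H^{3}(\overline{\mathcal{M}}^{\an}_{g,n,\mathbb{C}},\ZZ)$ is torsion, and we obtain $\Br(\overline{\mathcal{M}}_{g,n,\overline{\Q}})\simeq H^{3}(\overline{\mathcal{M}}^{\an}_{g,n,\mathbb{C}},\ZZ)$.

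Next I would pass to homology. As $\overline{\mathcal{M}}^{\an}_{g,n,\mathbb{C}}$ is a compact complex orbifold its singular homology is finitely generated, and the universal coefficient theorem together with $H^{3}(\overline{\mathcal{M}}_{g,n,\mathbb{C}},\Q)=0$ identifies $H^{3}(\overline{\mathcal{M}}^{\an}_{g,n,\mathbb{C}},\ZZ)\simeq H_{2}(\overline{\mathcal{M}}^{\an}_{g,n,\mathbb{C}},\ZZ)_{\tors}$. So the theorem is equivalent to the assertion that $H_{2}(\overline{\mathcal{M}}_{g,n,\mathbb{C}},\ZZ)$ is torsion-free for $g\ge 4$, which is exactly what is established in \cite{SecondHomologyGenusGEQ4} (indeed this group is shown there to be free abelian of finite rank, spanned by tautological and boundary classes); invoking that result finishes the proof.

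It remains to indicate where $g\ge 4$ enters. The computation of $H_{2}(\overline{\mathcal{M}}_{g,n})$ runs through the stratification of $\overline{\mathcal{M}}_{g,n}$ by topological type of the stable curve; in degrees $\le 2$ a Mayer--Vietoris/excision spectral sequence reduces the problem to the integral homology of the open strata $\mathcal{M}_{g',n'}$, i.e.\ to the homology of mapping class groups. The decisive input is Harer's theorem --- in the sharp range due to Korkmaz--Stipsicz --- that the second homology of the mapping class group of a genus $\ge 4$ surface (with marked points) is free abelian of finite rank; for genus $2$ and $3$ there is extra $2$-torsion in low-degree (co)homology, compatible with the order-two class $\gamma\in\Br(\mathcal{M}_{3})$ exploited in Section \ref{Subsection: genus three}, so the argument does not extend directly. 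Accordingly, the main obstacle I anticipate is controlling the torsion contributed by the deeper boundary strata, whose components are moduli of curves of genus possibly far below $g$ (including genus $0$ and $1$), where the relevant homology need not be torsion-free; one must check that no such torsion survives in $H_{2}$ of the whole compactification, and verifying that the hypothesis $g\ge 4$ propagates through the stratification in spite of this genus drop is the technical heart of \cite{SecondHomologyGenusGEQ4}.
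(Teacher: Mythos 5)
Your reduction to the statement that $H^{3}(\overline{\mathcal{M}}^{\an}_{g,n,\mathbb{C}},\ZZ)_{\tors}=0$ (equivalently, that $H_{2}(\overline{\mathcal{M}}^{\an}_{g,n,\mathbb{C}},\ZZ)$ is torsion-free) is correct: Lemma \ref{lem: short exact sequence with H3} together with $b_{2}=\rho$, which follows from Theorem \ref{thm: FontaineAbrashkinDMStacks}, does kill the divisible part, and the universal coefficient step is fine. The gap is in the final input. The reference \cite{SecondHomologyGenusGEQ4} (Korkmaz--Stipsicz) computes the second integral homology of the \emph{mapping class group}, i.e.\ of the open stack $\mathcal{M}_{g,n,\mathbb{C}}$ (whose analytification is a $K(\Gamma,1)$ orbifold), not of the compactification $\overline{\mathcal{M}}_{g,n,\mathbb{C}}$. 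It contains no stratification/Mayer--Vietoris analysis of the boundary of the kind your last paragraph attributes to it, and the torsion-freeness of $H_{2}(\overline{\mathcal{M}}_{g,n,\mathbb{C}},\ZZ)$ is not something you can simply quote from it; as you yourself observe, the boundary strata involve moduli of curves of genus $\leq 3$ (and products thereof), where the hypothesis $g\geq 4$ gives no control. So as written the decisive step is unsupported.

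The fix --- and what the paper does --- is to restrict to the open dense substack first: since $\overline{\mathcal{M}}_{g,n,\overline{\Q}}$ is regular, the restriction $\Br(\overline{\mathcal{M}}_{g,n,\overline{\Q}})\hookrightarrow\Br(\mathcal{M}_{g,n,\overline{\Q}})$ is injective (\cite[Proposition 2.5 (iv)]{AntieauMeierBrauerElliptic}), so after the comparison between $\overline{\Q}$ and $\mathbb{C}$ as in the proof of Proposition \ref{Prop: Vanishing (1,1) bar Q} it suffices to prove $\Br(\mathcal{M}_{g,n,\mathbb{C}})=0$. There the analogue of your reduction lands exactly on $H^{3}(\mathcal{M}^{\an}_{g,n,\mathbb{C}},\ZZ)_{\tors}=0$, a statement about the second homology of the mapping class group, which is what Korkmaz--Stipsicz actually prove for $g\geq 4$ (see \cite[Theorem 4.1]{Fringuelli-Pirisi}). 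Note that, conversely, the vanishing of $H^{3}(\overline{\mathcal{M}}^{\an}_{g,n,\mathbb{C}},\ZZ)_{\tors}$ is then obtained as a \emph{consequence} of the theorem via Lemma \ref{lem: short exact sequence with H3} (cf.\ Remark \ref{Remark:torsion and torsion coefficients}), so your equivalence runs the logic in the wrong direction at precisely the point where the work has to be done.
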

\begin{proof}
As in the proof of Proposition \ref{Prop: Vanishing (1,1) bar Q}, it is enough to show that $\Br(\mathcal{M}_{g,n,\mathbb{C}})=0.$
This vanishing is explained in \cite[Theorem 4.1]{Fringuelli-Pirisi} and is due to Korkmaz-Stipsicz and Harer:
The key point is that 
$$
H^{3}(\mathcal{M}_{g,n,\mathbb{C}}^{\text{an}},\ZZ)_{\tors}=0,
$$
which reduces to a computation in group homology and this is \cite[Theorem 1.1]{SecondHomologyGenusGEQ4} (where they rely crucially on a stablization result due to Harer \cite{HarerStability}, where the restriction $g\geq 4$ enters).
\end{proof}

\subsection*{Acknowledgments}
The authors would like to thank K\k{e}stutis \v{C}esnavi\v{c}ius and Jochen Heinloth for very helpful discussions.
The first author (S.B.) wants to thank Stefan Schröer for giving an interesting talk on his work \cite{SchroerEnriquesZ} in Essen, which inspired some arguments in Section 3.
The authors both acknowledge financial supports by DFG RTG 2553 and JSPS KAKENHI Grant Number 24H00015.
The work of the second author (K.I.) was also supported by JSPS KAKENHI Grant Number 24K16887.

\bibliographystyle{abbrvsort}
\bibliography{bibliography.bib}
\end{document}